\definecolor{darkred}{rgb}{.5,0,0}
\definecolor{darkblue}{rgb}{0,0,.5}
\newcommand{\calG}{\mathcal{G}}
\newcommand{\td}{\mathit{td}}
\newcommand{\tw}{\mathit{tw}}
\newcommand{\pw}{\mathit{pw}}
\newtheorem{theorem}{Theorem}
\newtheorem{lemma}{Lemma}
\newtheorem{proposition}{Proposition}
\newtheorem{definition}{Definition}
\newtheorem{corollary}{Corollary}
\newtheorem{observation}{Observation}
\newtheorem{question}{Question}
\title{Diameter estimates for graph associahedra}
\author{Jean Cardinal \and Lionel Pournin \and Mario Valencia-Pabon}
\date{}
\begin{document}
\maketitle

  \begin{abstract}
Graph associahedra are generalized permutohedra arising as special cases of nestohedra and hypergraphic polytopes. The graph associahedron of a graph $G$ encodes the combinatorics of search trees on $G$, defined recursively by a root $r$ together with search trees on each of the connected components of $G-r$. In particular, the skeleton of the graph associahedron is the rotation graph of those search trees. We investigate the diameter of graph associahedra as a function of some graph parameters. We give a tight bound of $\Theta(m)$ on the diameter of trivially perfect graph associahedra on $m$ edges. We consider the maximum diameter of associahedra of graphs on $n$ vertices and of given tree-depth, treewidth, or pathwidth, and give lower and upper bounds as a function of these parameters. We also prove that the maximum diameter of associahedra of graphs of pathwidth two is $\Theta (n\log n)$. Finally, we give the exact diameter of the associahedra of complete split and of unbalanced complete bipartite graphs.
\end{abstract}

\sloppy

\section{Introduction}

The vertices and edges of a polyhedron form a graph whose diameter (often referred to as the diameter of the polyhedron for short) is related to a number of computational problems. For instance, the question of how large the diameter of a polyhedron can be arises naturally from the study of linear programming and the simplex algorithm (see, for instance \cite{Santos2012} and references therein). The case of associahedra \cite{L04,S63,T51}---whose diameter is known exactly~\cite{P14}---is particularly interesting. Indeed, the diameter of these polytopes is related to the worst-case complexity of rebalancing binary search trees~\cite{SleatorTarjanThurston1988}. Here, we consider the same question on {\em graph associahedra}~\cite{CD06}, a large family of {\em generalized permutohedra} in the sense of Postnikov~\cite{P09} that can be built from an underlying graph.
The question has already been studied by Manneville and Pilaud~\cite{MP15}, by Pournin~\cite{P17} in the special case of cyclohedra, and by Cardinal, Langerman and P\'erez-Lantero~\cite{CLP18} in the special case of tree associahedra. Here, we aim at giving tighter bounds on the diameter of graph associahedra, in terms of some structural invariants of the underlying graphs.

\begin{figure}
\begin{center}
\includegraphics[page=1,scale=.6]{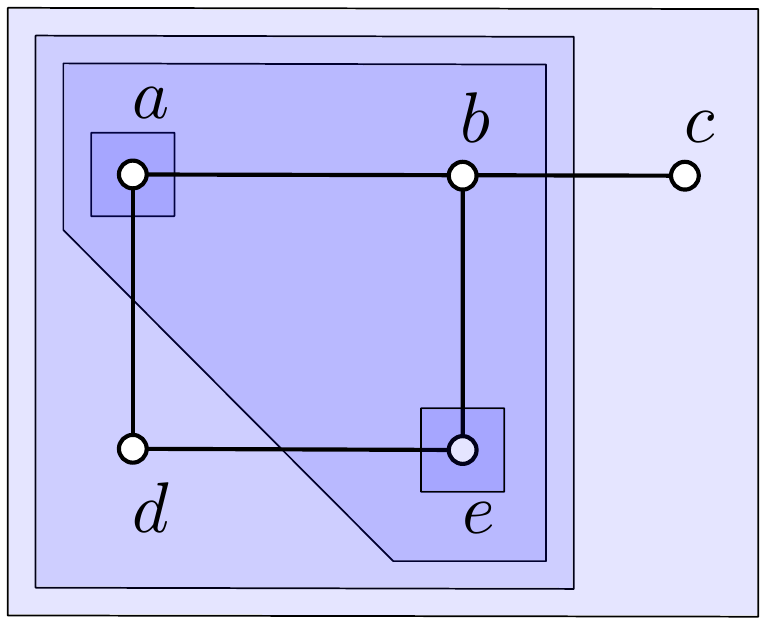}
\includegraphics[page=2,scale=.6]{figures.pdf}
\end{center}
\caption{\label{fig:tubing}An inclusionwise maximal tubing on a graph (left) and the search tree corresponding to it (right). The tubing is the set $\{\{a,b,c,d,e\},\{a,b,d,e\},\{a,b,e\},\{a\},\{e\}\}$. The tubes
$\{a,b,e\}$ and $\{a\}$ are nested; the tubes $\{a\}$ and $\{e\}$ are non-adjacent.}
\end{figure}

\subsection{Graph associahedra}

Graph associahedra have been defined by several authors including Davis, Januszkiewicz, and Scott~\cite{DJS03}, Carr and Devadoss~\cite{CD06}, and Postnikov~\cite{P09}.
We give their definition in terms of so-called {\em tubings} on graphs, following Carr and Devadoss~\cite{CD06}. 
Thereafter, we let $G=(V,E)$ be a simple, connected graph with $|V|=n$ vertices. 
A {\em tube} $S$ in $G$ is a subset $S\subseteq V$ such that the induced subgraph $G[S]$ is connected. 
We say that a pair $(S,S')$ of tubes is {\em nested} when either $S\subset S'$ or $S'\subset S$, and that it is {\em non-adjacent} when $G[S\cup S']$ is disconnected. A {\em tubing} $\mathcal S$ on $G$ is a collection of tubes, every pair of which is either nested or non-adjacent.
An example of (inclusionwise) maximal tubing on a graph is shown on the left of Figure~\ref{fig:tubing}.

The {\em graph associahedron} $\mathcal A (G)$ of $G$ is a convex polytope whose face lattice is isomorphic to the inclusion order of tubings on $G$. 
In particular, the vertices are the maximal tubings on $G$.
It is known that this polytope is always realizable, either by truncation of the permutohedron~\cite{D09}, or as a Minkowski sums of simplices~\cite{P09}. 
The latter is in fact a generalization of Loday's realization of the associahedron~\cite{L04}.

When $G$ is the complete graph on $n$ vertices, no pair of tubes can be non-adjacent. Therefore, the maximal tubings and the vertices of the graph associahedron are in one-to-one correspondence with the permutations of $n$ elements. In that case, the graph associahedron $\mathcal A(G)$ is simply the $(n-1)$-dimensional permutohedron. Another special case of interest is when the graph $G$ is a path on $n$ vertices. In that case, maximal tubings are a Catalan family, and the graph associahedron of $G$ is the $(n-1)$-dimensional classical associahedron. Similarly, the graph associahedra of cycles are the cyclohedra~\cite{P17}, and the graph associahedra of stars are stellohedra~\cite{CD06,FLS09,PW08}. The number of maximal tubings on a graph $G$ is known as the $G$-Catalan number~\cite{P09}.

Just as in the classical associahedron, whose edges correspond to flips in the triangulations of a convex polygon, the edges of a graph associahedron can be interpreted as {\em flips} in tubings: pairs of maximal tubings whose symmetric difference has size two. An example of such a flip is shown on Figure~\ref{fig:rotation}.

\subsection{Search trees}

We are interested in the structure of the skeleton of the graph associahedron $\mathcal A(G)$. For our purpose, it is useful to consider a representation of the vertices of $\mathcal A(G)$ alternative to the inclusionwise maximal tubings. A {\em search tree} $T$ on $G$ is a rooted tree with vertex set $V$ defined recursively as follows: The root of $T$ is a vertex $r\in V$, and $r$ is connected to the root of search trees on each connected component of $G-r$. We will use the standard terminology related to rooted trees, in particular the {\em parent}, {\em child}, {\em ancestor}, and {\em descendant} relations.
A vertex $v$ together with its descendants in a search tree $T$ form a {\em subtree} of $T$ {\em rooted at} $v$. Search trees are in one-to-one correspondence with maximal tubings. Indeed, the tubes are exactly the subsets of the vertices of $T$ contained in a subtree of $T$, as illustrated on the right of Figure~\ref{fig:tubing}. These trees have appeared under various disguises in different contexts. They are called {\em $\mathcal B$-trees} by Postnikov, Reiner, and Williams~\cite{PW08}, and {\em spines} by Manneville and Pilaud~\cite{MP15}. In the context of polymatroids, they are special cases of the partial orders studied by Bixby, Cunningham, and Topkis~\cite{BCT85}. In combinatorial optimization and graph theory, they can be defined in terms of {\em vertex rankings}~\cite{S89,BDJKKMT98}, {\em ordered colorings}~\cite{KMS95}, or as {\em elimination trees}~\cite{P88}.

\subsection{Flips in tubings and rotations in search trees}

We will interpret the edges of $\mathcal{A}(G)$ as {\em rotations} in the search trees on $G$. A rotation in a search tree $T$ involves a pair $a,b$ of vertices, where $a$ is the parent of $b$ in $T$. The set $A$ of vertices in the subtree rooted at $a$ induces a connected subgraph of $G$, and forms a tube in the maximal tubing corresponding to $T$. The set $B$ corresponding to the subtree rooted at $b$ is a strict subset of $A$, and $(A,B)$ is a nested pair of tubes. Note that $B$ is one of the connected components of $G-a$. The rotation consists of picking $b$ instead of $a$ as the root of the subtree for the graph $G[A]$. The vertex $a$ then becomes the root of the subtree on the connected component $B'$ of $G[A]-b$ that contains $a$. After the rotation, each subtree rooted at a child of $b$ is reattached to either $a$ or $b$, depending on whether the child belongs to the same connected component of $G[A]-b$ as $a$ or not. In terms of maximal tubings, it simply amounts to flipping the tubes $B$ and $B'$.  

\begin{figure}
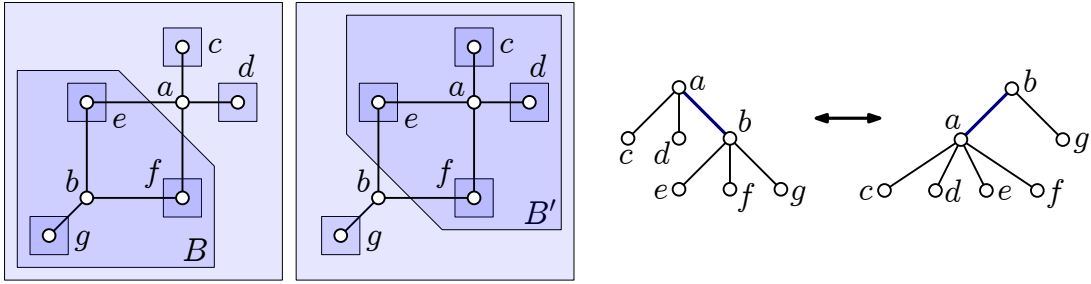

\begin{center}
\includegraphics[page=4,scale=.6]{figures.pdf}
\includegraphics[page=5,scale=.6]{figures.pdf}
\quad
\includegraphics[page=6,scale=.6]{figures.pdf}
\end{center}
\caption{\label{fig:rotation}A flip in a tubing, and the corresponding rotation in the associated search tree.}
\end{figure}

The correspondence between flips in tubings and rotations in search trees is illustrated in Figure~\ref{fig:rotation}.

\subsection{Related works}

In a recent paper, Bose, Cardinal, Iacono, Koumoutsos, and Langerman~\cite{BCIKL19} consider the design of competitive algorithms for the problem of searching in trees.
They define a computation model involving search trees on trees, in which pointer moves and rotations all have unit cost.
This can be seen as a generalization of the standard online binary search tree problem~\cite{ST85,W89,DHIP07,DHIKP09}, which has fostered developments in combinatorics, including the exact asymptotic estimate on the diameter of associahedra~\cite{SleatorTarjanThurston1988}.
In the context of search trees on trees, the lower bound of Cardinal, Langerman, and P\'erez-Lantero~\cite{CLP18} on the diameter of tree associahedra ruled out some of the techniques that were known for binary search trees, and motivated the definition of {\em Steiner-closed} search trees. 
The importance of this notion is emphasized in a recent article by Berendsohn and Kozma~\cite{BK20}. 
Our results could give insights on potential generalizations to online search trees on graphs.

Other questions of interest include the Hamiltonicity of graph associahedra, proved by Manneville and Pilaud~\cite{MP15}, and the
computationally efficient generation of Hamilton paths and cycles in their skeleton, recently studied by Cardinal, Merino, and M\"utze~\cite{CMM22}.

\subsection{The diameter of graph associahedra}

We will denote the diameter of the graph associahedron of $G$ by $\delta (\mathcal{A} (G))$. 
Manneville and Pilaud proved the following tight bounds on that quantity as a function of the number of vertices and edges of $G$.
\begin{theorem}[Manneville-Pilaud~\cite{MP15}]
  \label{thm:mlb}
For any connected graph $G$ on $n$ vertices and $m$ edges, the diameter of the graph associahedron of $G$ satisfies
$$
\max\{m, 2n - 20\} \leq \delta (\mathcal A(G)) \leq {n\choose 2}.
$$
\end{theorem}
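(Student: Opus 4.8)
The plan is to prove the three inequalities separately: $\delta(\mathcal A(G)) \ge m$, $\delta(\mathcal A(G)) \ge 2n-20$, and $\delta(\mathcal A(G)) \le \binom{n}{2}$.

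For the \emph{upper bound}, I would reduce to the permutohedron $\mathcal A(K_n)$, whose diameter is $\binom{n}{2}$, the maximum number of inversions of a permutation. After identifying $V$ with $\{1,\dots,n\}$, associate to each linear order $\pi$ of $V$ the search tree $\phi(\pi)$ obtained by the elimination procedure: take the $\pi$-smallest vertex as root and recurse on each connected component of $G$ minus that vertex, with the induced order. The map $\phi$ is onto, since any linear extension of a search tree $T$ (rooted at its root, so that ancestors precede descendants) is sent back to $T$. The key point is that if $\pi'$ is obtained from $\pi$ by an adjacent transposition of two vertices $x,y$, then $\phi(\pi)$ and $\phi(\pi')$ either coincide (when $x$ and $y$ are incomparable in $\phi(\pi)$) or differ by a single rotation, with $x$ and $y$ in the roles of the vertices $a$ and $b$ from the rotation described above. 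Concatenating along a shortest path of $\mathcal A(K_n)$ from $\pi$ to $\pi'$ yields $d(\phi(\pi),\phi(\pi'))\le d(\pi,\pi')$, and since $\phi$ is surjective this gives $\delta(\mathcal A(G)) \le \delta(\mathcal A(K_n)) = \binom{n}{2}$.

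For the \emph{lower bound} $\delta(\mathcal A(G)) \ge m$, I would track edge orientations. In any search tree on $G$, each edge $uv$ has exactly one of $u,v$ an ancestor of the other, because $u$ and $v$ remain in a common active component until one of them is chosen as a root (deleting any third vertex keeps them adjacent); orienting every edge from its ancestor endpoint to its descendant endpoint therefore defines an acyclic orientation of $G$. The main step is to show that a single rotation reorients at most one edge of $G$: only the two rotated vertices have their sets of proper descendants changed, and the reattachment of subtrees creates no new inversion, because each reattached subtree is contained in one connected component of the relevant induced subgraph; hence the only edge that can flip is the one joining the two rotated vertices, and only when $G$ contains it. Finally, if $T_1$ is the elimination tree of a linear order $\prec$ on $V$ and $T_2$ that of the reverse order, then every edge of $G$ is oriented from its $\prec$-smaller endpoint to its $\prec$-larger endpoint in $T_1$ and the opposite way in $T_2$; so all $m$ edges must be reoriented, requiring at least $m$ rotations.

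The hard part is $\delta(\mathcal A(G)) \ge 2n-20$, which must hold for \emph{every} connected $G$ and so cannot exploit a large clique or a long induced path. I would follow the spirit of the known estimate on the diameter of the classical associahedron (the path case), where the lower bound of order $2n$ is already delicate and was obtained first via hyperbolic volumes and then combinatorially by Pournin. The plan is to exhibit two extremal search trees $T_1,T_2$ on $G$ -- morally, elimination trees of two orders that are ``as opposite as possible'' -- together with a potential $\Phi$ that changes by at most a bounded amount under a rotation while $\Phi(T_1)-\Phi(T_2)$ is $2n-O(1)$. The edge-orientation potential alone only gives $\Omega(n)$ (it equals $m=n-1$ for a tree), so the real difficulty is obtaining the leading constant $2$: one needs a potential that no rotation decreases by more than $O(1)$ yet that differs by about $2n$ between the extremes, which amounts to understanding why, near the extremes, a constant fraction of rotations cannot decrease it at all. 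I expect this to be the main obstacle, and it is presumably also where the constant $20$ in the statement comes from, absorbing the slack in a reduction to (or imitation of) the $2n-6$-type bound for the path, valid once $n$ is large enough.
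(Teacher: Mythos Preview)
This theorem is not proved in the present paper: it is quoted from Manneville and Pilaud~\cite{MP15} and used as a black box. So there is no ``paper's own proof'' to compare with, and your proposal should be read as an attempt to reconstruct the argument of~\cite{MP15}.

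Your arguments for the upper bound and for $\delta(\mathcal A(G))\ge m$ are correct and essentially the standard ones. The surjection $\phi$ from linear orders to search trees that turns adjacent transpositions into rotations (or identities) is exactly the content of the monotonicity statement $\delta(\mathcal A(G))\le\delta(\mathcal A(K_n))=\binom{n}{2}$, which the paper also quotes as Theorem~\ref{thm:monot}. The edge-orientation potential for the $m$ bound is also the right invariant; your check that only the edge between the two rotated vertices can change orientation is accurate.

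The genuine gap is the $2n-20$ bound: you do not prove it, you only describe a hoped-for potential $\Phi$ with $\Phi(T_1)-\Phi(T_2)=2n-O(1)$ and bounded variation under rotations, without exhibiting one. Your suggestion to ``imitate the $2n-6$ bound for the path'' is not directly usable, because an arbitrary connected $G$ need not contain a spanning path, so monotonicity does not reduce the problem to the classical associahedron. The route taken in~\cite{MP15} is different: one first passes to a spanning tree of $G$ via monotonicity (Theorem~\ref{thm:monot}), and then proves a $2n-O(1)$ lower bound for \emph{tree} associahedra by exhibiting two explicit elimination trees and a careful counting argument tailored to the tree structure (this is where the constant $20$ arises). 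If you want to complete the proof, that reduction to a spanning tree followed by a direct combinatorial bound for trees is the missing ingredient; a general-purpose potential as you describe is not known to do the job.
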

In order to distinguish between these two extreme cases (linear versus quadratic diameter), we aim at bounds expressed as a function of some graph invariants, or bounds that hold for other families of graphs.

The following result, also from Manneville and Pilaud, will be useful as well .

\begin{theorem}[Manneville-Pilaud~\cite{MP15}]
  \label{thm:monot}
  The diameter $\delta (\mathcal{A}(G))$ is non-decreasing: $\delta (\mathcal{A}(G))\leq \delta(\mathcal{A}(G'))$ for any two graphs $G,G'$ such that $G$ is a subgraph of $G'$. 
\end{theorem}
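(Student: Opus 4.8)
The plan is to derive the monotonicity from a geometric fact: that $\mathcal{A}(G)$ is always a \emph{Minkowski summand} of $\mathcal{A}(G')$ when $G$ is a subgraph of $G'$, together with the principle that the diameter of the graph of a polytope does not increase when one passes to a Minkowski summand of it.

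To see the summand claim, I would use Postnikov's realization of graph associahedra as Minkowski sums of simplices~\cite{P09}: for a graph $H$, $\mathcal{A}(H)=\sum_{S}\Delta_S$ over all tubes $S$ of $H$, where $\Delta_S=\mathrm{conv}\{e_v:v\in S\}\subseteq\mathbb{R}^{V(H)}$. If $G$ is a subgraph of $G'$ and $G[S]$ is connected for some $S\subseteq V(G)$, then $G'[S]\supseteq G[S]$ is connected too, so every tube of $G$ is a tube of $G'$. Regarding all polytopes as lying inside $\mathbb{R}^{V(G')}$, this gives $\mathcal{A}(G')=\mathcal{A}(G)+R$, with $R$ the Minkowski sum of the simplices $\Delta_S$ over the tubes $S$ of $G'$ that are not tubes of $G$. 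Hence $\mathcal{A}(G)$ is a Minkowski summand of $\mathcal{A}(G')$, of possibly smaller dimension.

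For the principle, suppose $P=Q+R$ for polytopes $P,Q,R$. The normal fan of $P$ is then the common refinement of those of $Q$ and $R$, so in particular it refines the normal fan of $Q$; consequently, sending the vertex of $P$ that maximizes a generic linear functional $c$ to the vertex of $Q$ that maximizes $c$ is a well-defined surjection $\pi$ from the vertices of $P$ onto the vertices of $Q$, the normal cone of $v$ in $P$ being contained in that of $\pi(v)$ in $Q$. The crux is that $\pi$ maps each edge of $P$ either to an edge of $Q$ or to a single vertex: an edge of $P$ corresponds to a wall $\tau$ of the normal fan of $P$, and the smallest cone of the normal fan of $Q$ whose relative interior meets that of $\tau$ must, for dimension reasons, be either full-dimensional --- so the two endpoints of the edge have the same image under $\pi$ --- or a wall, whose two adjacent vertices of $Q$ are then the images of the endpoints. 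A surjection between the graphs of two polytopes that preserves or collapses each edge is distance non-increasing, so $\delta(Q)\leq\delta(P)$; applying this with $P=\mathcal{A}(G')$ and $Q=\mathcal{A}(G)$ yields the theorem. The step I expect to demand the most care is this last one when $G'$ has strictly more vertices than $G$: then $\mathcal{A}(G)$ sits inside $\mathbb{R}^{V(G')}$ with a positive-dimensional lineality space in its normal fan, and one must check that a wall of the refining fan still lies either in a single full-dimensional cone or in a single wall of the coarser fan.
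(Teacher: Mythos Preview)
The paper does not give its own proof of this theorem; it is quoted from Manneville and Pilaud~\cite{MP15}. There is therefore nothing in the present paper to compare your argument against.

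That said, your approach is sound and is essentially the one Manneville and Pilaud themselves use: realize graph associahedra via Postnikov's Minkowski sums of coordinate simplices, observe that every tube of $G$ is a tube of any supergraph $G'$, conclude that the normal fan of $\mathcal{A}(G')$ refines that of $\mathcal{A}(G)$, and read off the edge-contracting surjection between the $1$-skeleta. The subtlety you isolate --- the case $V(G)\subsetneq V(G')$, where the summand $\mathcal{A}(G)$ is lower-dimensional inside $\mathbb{R}^{V(G')}$ --- is indeed harmless: the normal fan of $\mathcal{A}(G)$ is still complete in $\mathbb{R}^{V(G')}$ (with a lineality space), its maximal cones and walls are still of codimension~$0$ and~$1$, and since a refinement sends the relative interior of each cone into the relative interior of a unique coarser cone of at least the same dimension, a wall of $\mathcal{A}(G')$ must land either in a chamber or in a wall of $\mathcal{A}(G)$, exactly as you say.
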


\subsection{Pathwidth, treewidth, and tree-depth}

We consider three classical numerical invariants of a graph $G$.
We refer the reader to the texts of Diestel~\cite{D05} and Ne{\v s}et{\v r}il and Ossona de Mendez~\cite{NO12} for details and alternative definitions of those parameters.

The {\em pathwidth} of a graph $G$ is $\omega - 1$, where $\omega$ is the smallest clique number of an interval supergraph of $G$, that is, an interval graph that can be obtained from $G$ by adding edges. Similarly, the {\em treewidth} of a graph $G$ is exactly one less than the smallest clique number of a chordal supergraph of $G$.
Pathwidth and treewidth can also be defined in terms of path and tree decompositions, respectively. Paths have pathwidth one, trees have treewidth one, and on an intuitive level, those two parameters quantify how close the graph is to a path or a tree. They also play a key role in the theory of graph minors and in graph algorithms.

The {\em tree-depth} of a graph $G$ is the smallest height of a search tree on $G$, where a tree composed of a single vertex has height one.
The tree-depth is definitely a natural invariant to consider, as it is a function of the exact same objects that form the vertices
of the graph associahedron. Surprisingly, this connection does not seem to have been exploited in previous works.

\subsection{Our Results}

We first prove that the lower bound of $m$ from Manneville and Pilaud on the diameter of the associahedra of a graph on $m$ edges is essentially tight for all
{\em trivially perfect} graphs. Those graphs appear naturally here, as they are maximal for a fixed tree-depth. In Section~\ref{sec:tp}, we properly define trivially perfect graphs, and prove the following result.

\begin{theorem}
\label{teo1tp}
Let $G$ be a connected trivially perfect graph with $m$ edges.
Then $\delta(\mathcal{A}(G)) = \Theta(m)$. 
\end{theorem}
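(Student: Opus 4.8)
The plan is to derive the lower bound directly from Theorem~\ref{thm:mlb}, which already gives $\delta(\mathcal{A}(G)) \ge m$ for every connected graph, and then to prove a matching upper bound $\delta(\mathcal{A}(G)) \le 2m$ by induction on the number $n$ of vertices of $G$. The induction will rest on two ingredients: the recursive structure of trivially perfect graphs, and a bound on the number of rotations needed to bring a prescribed vertex to the root of a search tree.

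For the structural part I will use the fact (recalled in Section~\ref{sec:tp}) that a connected trivially perfect graph $G$ has a universal vertex $r$, and that the class of trivially perfect graphs is hereditary; hence the connected components $G_1,\dots,G_k$ of $G-r$ are themselves connected trivially perfect graphs, each on fewer than $n$ vertices. Since $r$ is universal we have $\deg_G(r)=n-1$, so the edge counts satisfy $\sum_{i=1}^{k} m(G_i) = m-(n-1)$. For the rotation part I will observe that rotating a vertex $v$ with its current parent decreases the depth of $v$ by exactly one and only modifies the portion of the tree consisting of the root-to-$v$ path and the subtrees hanging off it; iterating, any search tree $T$ on $G$ can be transformed into a search tree rooted at $r$ using exactly $\operatorname{depth}_T(r)-1\le n-1$ rotations. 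Finally, I will use that once $r$ is the root, the subtrees hanging from $r$ are precisely search trees on $G_1,\dots,G_k$ and a rotation avoiding $r$ takes place inside exactly one of them, so that the face of $\mathcal{A}(G)$ spanned by the search trees rooted at $r$ is the product $\mathcal{A}(G_1)\times\cdots\times\mathcal{A}(G_k)$, whose skeleton has diameter $\sum_{i=1}^{k}\delta(\mathcal{A}(G_i))$.

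Assembling these: to connect two search trees $T,T'$ on $G$, I first bring $r$ to the root of each (at most $2(n-1)$ rotations in all) and then move within the product face (at most $\sum_i\delta(\mathcal{A}(G_i))$ rotations), which yields
\[
\delta(\mathcal{A}(G)) \le 2(n-1)+\sum_{i=1}^{k}\delta(\mathcal{A}(G_i)) \le 2(n-1)+2\sum_{i=1}^{k} m(G_i) = 2(n-1)+2\bigl(m-(n-1)\bigr) = 2m ,
\]
the middle inequality being the induction hypothesis and the base case $n=1$ (so $m=0$) being trivial. Combined with $\delta(\mathcal{A}(G))\ge m$ this gives $\delta(\mathcal{A}(G))=\Theta(m)$.

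The step I expect to require the most care is the product decomposition: making precise that fixing $r$ at the root splits the problem into genuinely independent subproblems on $G_1,\dots,G_k$, i.e., that a geodesic in each $\mathcal{A}(G_i)$ can be lifted to a rotation sequence in $\mathcal{A}(G)$ confined to the corresponding subtree and not interfering with the others. This product structure of graph associahedra is intuitively clear and essentially folklore, but it is the one point that needs a genuine argument rather than a one-line observation; the rest is bookkeeping with depths and edge counts.
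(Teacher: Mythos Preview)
Your proposal is correct and follows essentially the same route as the paper: the lower bound is Theorem~\ref{thm:mlb}, and the upper bound is obtained by picking a universal vertex $r$, lifting it to the root in at most $n-1$ rotations, and recursing on the connected components of $G-r$ using the identity $n-1+\sum_i m(G_i)=m$. The only cosmetic difference is that the paper inducts on the tree-depth and bounds the distance from any search tree to a single canonical minimum-height tree by $m$ (hence diameter $\le 2m$ via the triangle inequality), whereas you induct on $n$ and bound the diameter directly; the underlying argument and the resulting constant $2$ are the same.
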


In Section~\ref{sec:td}, we refine the bounds on the diameter of graph associahedra whose underlying graphs have bounded tree-depth or treewidth. Given a family $\calG$ of graphs, we consider the worst-case diameter
$$
\delta_{\calG} (n) = \max_{G\in \calG : |V(G)|=n} \delta (\mathcal {A} (G))  
$$
of their graph associahedra. The tree-depth is an example of parameter that precisely controls the behavior of the diameter of the associahedra, in the following worst-case sense.
\begin{theorem}
\label{thm:tdtight}
Let $\calG$ be the family of graphs on $n$ vertices and of tree-depth at most $\td (n)$. Then 
$$
\delta_{\calG} (n) = \Theta (\td (n)\cdot n).
$$
\end{theorem}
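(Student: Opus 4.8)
The plan is to prove the two matching bounds separately. For the upper bound I will reduce to Theorem~\ref{teo1tp} using the monotonicity of the diameter (Theorem~\ref{thm:monot}), after observing that a graph of bounded tree-depth embeds into a \emph{sparse} trivially perfect graph of the same tree-depth. For the lower bound I will exhibit a single graph of tree-depth at most $\td(n)$ carrying $\Omega(\td(n)\cdot n)$ edges and invoke the Manneville--Pilaud lower bound of Theorem~\ref{thm:mlb}.

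\emph{Upper bound.} Given $G\in\calG$, let $T$ be a search tree on $G$ of height $d:=\td(n)$. By definition of a search tree, every edge of $G$ joins a vertex to one of its ancestors in $T$. Let $H$ be the graph on $V(G)$ whose edges are exactly the ancestor--descendant pairs of $T$, i.e. the trivially perfect closure of $T$. Then $H$ is connected and trivially perfect, contains $G$ as a subgraph, and $T$ is still a search tree on $H$, so $H$ has tree-depth at most $d$. The crucial point is that $H$ is sparse: a vertex at depth $i$ in $T$ (the root having depth $1$) has exactly $i-1$ ancestors, and summing over all vertices counts each ancestor--descendant pair once, so $H$ has at most $(d-1)n$ edges. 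Theorem~\ref{teo1tp} then gives $\delta(\mathcal{A}(H))=O\bigl((d-1)n\bigr)=O(\td(n)\cdot n)$, and Theorem~\ref{thm:monot} gives $\delta(\mathcal{A}(G))\le\delta(\mathcal{A}(H))$, which is the desired bound.

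\emph{Lower bound.} I will build a graph $G_n$ realizing $\Omega(\td(n)\cdot n)$. When $\td(n)$ is bounded by a constant, a star on $n$ vertices has tree-depth $2$ and diameter at least $2n-20$ by Theorem~\ref{thm:mlb}, so one may assume $d=\td(n)$ is at least a suitable constant. Then take one apex vertex $v$, together with $k=\lfloor (n-1)/(d-1)\rfloor$ vertex-disjoint copies of $K_{d-1}$, each joined to $v$ by a single edge, and $(n-1)\bmod(d-1)$ further vertices each joined only to $v$; this gives $|V(G_n)|=n$ and $G_n$ connected. Rooting a search tree at $v$ leaves components that are cliques $K_{d-1}$ (tree-depth $d-1$) or isolated vertices, so $\td(G_n)\le d$ and $G_n\in\calG$. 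A routine count shows $G_n$ has at least $k\binom{d-1}{2}=\Omega\bigl((n/d)\cdot d^2\bigr)=\Omega(\td(n)\cdot n)$ edges, so Theorem~\ref{thm:mlb} yields $\delta(\mathcal{A}(G_n))\ge m(G_n)=\Omega(\td(n)\cdot n)$. Combining the two bounds gives $\delta_{\calG}(n)=\Theta(\td(n)\cdot n)$.

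The main obstacle is the lower-bound construction rather than the upper bound (which is essentially an assembly of Theorems~\ref{teo1tp} and~\ref{thm:monot} together with the edge-count estimate): one must pack $\Omega(\td(n)\cdot n)$ edges into a \emph{connected} graph while keeping its tree-depth from exceeding $\td(n)$. This forces the choice of many cliques of size $\Theta(\td(n))$ — a single clique does not have enough edges once $\td(n)$ is much smaller than $n$ — glued through a connector that is cheap enough (a lone apex, plus pendant vertices) not to increase the tree-depth beyond the budget. Checking these two competing constraints simultaneously is where the care is needed; everything else is bookkeeping on vertex and edge counts.
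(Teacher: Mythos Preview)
Your proof is correct and follows essentially the same strategy as the paper: for the upper bound you embed $G$ into the trivially perfect closure of a minimum-height search tree and bound its edges by $(d-1)n$ (the paper packages this as Lemma~\ref{lem:mtp} and Theorem~\ref{thm:tdub}), then apply monotonicity and Theorem~\ref{teo1tp}. For the lower bound both you and the paper glue $\Theta(n/\td(n))$ cliques of size $\Theta(\td(n))$ through an apex and invoke Theorem~\ref{thm:mlb}; the only cosmetic difference is that the paper makes the apex a member of every clique (so the witness graph is itself trivially perfect), whereas you attach each clique to the apex by a single edge.
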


We obtain the following lower and upper bounds as a function of the treewidth of the graph.
\begin{theorem}
  \label{thm:twb}
Let $\calG$ be the family of graphs on $n$ vertices and of treewidth at most $\tw (n)$. Then
$$
\Omega (\tw (n)\cdot n) \leq \delta_{\calG} (n) \leq O (\tw (n)\cdot n\log n).
$$
\end{theorem}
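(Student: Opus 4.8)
The plan is to establish the two bounds separately, in each case reducing to a graph class whose diameter behavior is already under control. For the lower bound $\Omega(\tw(n)\cdot n)$, I would exhibit, for each target treewidth $k=\tw(n)$, a graph $G$ on $n$ vertices with $\tw(G)\le k$ whose associahedron has diameter $\Omega(kn)$. The natural candidate is a disjoint-like ``stacking'' construction: take roughly $n/(k+1)$ blocks, each a clique on $k+1$ vertices (so treewidth exactly $k$), and chain them together in a path-of-cliques fashion so the whole graph stays connected with treewidth $k$. Since a clique on $k+1$ vertices has $\binom{k+1}{2}=\Theta(k^2)$ edges, Theorem~\ref{thm:mlb} gives each block's associahedron diameter $\Omega(k^2)$; the point is to argue that these contributions add up over the $\Theta(n/k)$ blocks, yielding $\Omega(k^2\cdot n/k)=\Omega(kn)$. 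To make the additivity rigorous I would use a projection/lower-bound argument: a rotation in a search tree on $G$ affects vertices lying in a single block at a time (or can be charged to one block), so any path in the skeleton of $\mathcal{A}(G)$ between two suitably chosen extreme tubings must, restricted to each block, traverse a path between two far-apart tubings of that block's associahedron. This kind of ``independent coordinates'' decomposition is exactly what drives the $2n-20$-type bounds in Manneville--Pilaud, so I would lean on their techniques (or cite Theorem~\ref{thm:mlb} applied blockwise together with a disjointness argument).

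For the upper bound $\delta_\calG(n)\le O(\tw(n)\cdot n\log n)$, the strategy is to bound the number of rotations needed to transform an arbitrary search tree $T$ on $G$ into a fixed canonical search tree $T^\star$; then the diameter is at most twice this quantity. The key structural fact is that a graph of treewidth $k$ admits a tree decomposition of width $k$, and hence (by a standard argument) a search tree of height $O(k\log n)$ — indeed, balanced separators of size $k+1$ let one recursively split $G$, and the recursion depth is $O(\log n)$ with each level contributing a ``layer'' of at most $k+1$ vertices, for total height $O(k\log n)$. Fix $T^\star$ to be such a low-depth search tree. Now I would transform $T$ into $T^\star$ top-down: bring the root of $T^\star$ to the top of $T$ using a sequence of rotations, then recurse inside each connected component of $G-r$. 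Bringing a chosen vertex $v$ to the root of a search tree on a connected graph costs at most $\mathrm{depth}_T(v)$ rotations (each rotation moves $v$ one step up along its root path). Since we will, over the whole process, perform this ``pull to root'' operation once for every vertex of $G$, and each such operation costs at most the current depth of that vertex, the total is bounded by $\sum_{v} (\text{depth at the time } v \text{ is pulled})$. The crucial observation is that once a vertex has been placed, it is never rotated again, and the vertices still ``active'' when we pull $v$ all lie in a subtree of $T^\star$ containing $v$; bounding the relevant depth by the height $O(k\log n)$ of $T^\star$ (or a slowly-growing function of the subtree size) gives total cost $O(n\cdot k\log n)$.

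The main obstacle is the upper bound's charging argument: naively, the depth of a vertex in the intermediate tree could be as large as $n$, not $O(k\log n)$, because the starting tree $T$ is arbitrary. The fix is to be careful about the order of operations — process components in the order induced by $T^\star$, and observe that when it is time to pull the $i$-th vertex $v_i$ (in $T^\star$'s preorder within its current block), all vertices that were above $v_i$ in $T$ but should be below it in $T^\star$ have already been relocated, so $v_i$'s depth in the current intermediate tree is governed by its depth in $T^\star$, namely $O(k\log n)$. Formalizing ``governed by'' requires an invariant maintained along the transformation: after step $i$, the top part of the intermediate tree agrees with $T^\star$, and the remaining vertices hang below in subtrees each of which is a valid search tree on a connected induced subgraph. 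Establishing and maintaining this invariant — and verifying that each ``pull to root'' step on a connected graph genuinely costs at most the vertex's current depth — is the technical heart; the rest is summation. I expect the final count to be $\sum_{v\in V}\mathrm{depth}_{T^\star}(v) = O(n\cdot k\log n)$, since each vertex's contribution is at most the height of $T^\star$, giving the claimed $O(\tw(n)\cdot n\log n)$ bound.
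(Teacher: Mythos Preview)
Your lower-bound construction (a chain of cliques of size $k+1$ glued along single vertices) is exactly the paper's, but you are overcomplicating the argument. No projection or blockwise additivity is needed: the graph itself has $\Theta(n/k)\cdot\binom{k+1}{2}=\Theta(kn)$ edges, so a single application of Theorem~\ref{thm:mlb} to the whole graph already yields $\delta(\mathcal{A}(G))\ge m=\Omega(kn)$.

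Your upper-bound outline is reasonable --- fix a search tree $T^\star$ of height $O(k\log n)$, pull its root to the top, recurse --- but the charging step has a genuine gap. When you reach $v_i$ in a preorder traversal of $T^\star$, the vertices still active in $v_i$'s component are precisely those in the $T^\star$-subtree rooted at $v_i$; none of the vertices that ``should be below $v_i$ in $T^\star$'' have been relocated, because descendants are processed \emph{after} $v_i$, not before. So the depth of $v_i$ in the intermediate tree is bounded only by the \emph{size} of that subtree, which can be $\Theta(n)$, not by the height of $T^\star$; your proposed fix is backwards. The argument can be repaired by bounding each step by the subtree size and summing: $\sum_i |T^\star_{v_i}| = \sum_v \mathrm{depth}_{T^\star}(v) \le n\cdot\mathrm{height}(T^\star)=O(n\cdot k\log n)$ via double counting. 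That gives the bound you wanted, but it is not the reasoning you wrote.

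The paper's route is shorter and sidesteps any analysis of intermediate trees: it factors through tree-depth. Theorem~\ref{thm:tdub} gives $\delta(\mathcal{A}(G))\le 2\,\td(G)\cdot n$ (via monotonicity and the trivially perfect supergraph of $G$), and the classical inequality $\td(G)=O(\tw(G)\cdot\log n)$ finishes the upper bound in one line (Corollary~\ref{cor:twub}).
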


The same bounds hold as a function of the pathwidth of the graph.
\begin{theorem}
  \label{thm:pwb}
Let $\calG$ be the family of graphs on $n$ vertices and of pathwidth at most $\pw (n)$. Then
$$
\Omega (\pw (n)\cdot n) \leq \delta_{\calG} (n) \leq O(\pw(n)\cdot n\log n).
$$  
\end{theorem}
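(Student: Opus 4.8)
The plan is to obtain both bounds cheaply from results already in the excerpt. For the upper bound, I would use the standard inequality $\tw(G)\le\pw(G)$, valid for every graph: an interval supergraph of $G$ is in particular a chordal supergraph, so the smallest clique number over chordal supergraphs of $G$ does not exceed the smallest clique number over interval supergraphs. Consequently, every graph on $n$ vertices of pathwidth at most $\pw(n)$ has treewidth at most $\pw(n)$, so the family $\calG$ of Theorem~\ref{thm:pwb} is contained in the family appearing in Theorem~\ref{thm:twb} with $\tw(n):=\pw(n)$. The bound $\delta_\calG(n)\le O(\pw(n)\cdot n\log n)$ is then immediate from Theorem~\ref{thm:twb}.

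For the lower bound I would exhibit, for every $n$ and every target value $p=\pw(n)$, a connected graph $G_{n,p}$ on $n$ vertices with $\pw(G_{n,p})\le p$ and $\Omega(pn)$ edges; Theorem~\ref{thm:mlb} then gives $\delta(\mathcal{A}(G_{n,p}))\ge m=\Omega(pn)$ directly. A convenient choice is a \emph{path of cliques}: take $t=\lfloor n/(p+1)\rfloor$ vertex-disjoint copies $C_1,\dots,C_t$ of $K_{p+1}$, single out a vertex $v_i\in C_i$ in each, and add the bridges $v_iv_{i+1}$ for $1\le i<t$; the at most $p$ leftover vertices are appended as a path hanging off $v_1$, which does not change the pathwidth once $p\ge 1$. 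The linear path decomposition alternating the bags $C_i$ and $\{v_i,v_{i+1}\}$ (extended along the pendant path) certifies $\pw(G_{n,p})\le p$, while the number of edges is at least $t\binom{p+1}{2}$, which is $\Omega(pn)$ in every regime of $p$ (when $p\le n/2$ one has $t=\Theta(n/p)$, and when $p>n/2-1$ already a single $K_{p+1}$ has $\Theta(p^2)=\Theta(pn)$ edges). Alternatively, one can avoid Theorem~\ref{thm:mlb} and invoke monotonicity (Theorem~\ref{thm:monot}): $G_{n,p}$ contains $t$ vertex-disjoint copies of $K_{p+1}$, whose graph associahedron is a product of $(p)$-dimensional permutohedra of diameter $\binom{p+1}{2}$ each, yielding the same bound.

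The only delicate points—and the closest thing to an obstacle in this argument—are bookkeeping: checking that the path decomposition above really has width $p$ (each vertex occupying a contiguous block of bags, and every edge, bridges included, being covered by some bag), and handling the rounding when $p+1\nmid n$ without inflating the pathwidth. Both are routine. The genuine content of Theorem~\ref{thm:pwb} lies entirely in Theorem~\ref{thm:twb}, on which the upper bound rests; the statement for pathwidth is essentially its corollary, together with the observation that the extremal construction behind the treewidth lower bound can be chosen to have the same pathwidth.
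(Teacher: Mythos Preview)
Your argument is correct and essentially mirrors the paper's: the upper bound is obtained from the treewidth bound (Corollary~\ref{cor:twub}/Theorem~\ref{thm:twb}) via $\tw(G)\le\pw(G)$, and the lower bound by exhibiting a ``path of cliques'' with $\Omega(pn)$ edges and invoking Theorem~\ref{thm:mlb}. The only cosmetic difference is the construction itself: the paper (Theorem~\ref{thm:pwcliques}) overlaps consecutive $k$-cliques in a single shared vertex, whereas you take disjoint $(p{+}1)$-cliques joined by bridges and handle the residue with a pendant path; both yield an interval graph of the right pathwidth and edge count, so nothing is gained or lost either way.
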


The case of graphs of pathwidth at most {\em two} is intriguing, as one could have suspected that the diameter is close to that of the classical associahedra (the {\em path} associahedra, whose diameter is linear).
In fact, the diameter jumps from linear to linearithmic, as we will show in Section~\ref{sec:pw}.
\begin{theorem}
  \label{thm:pw2tight}
Let $\calG$ be the family of graphs of pathwidth two. Then
$$
\delta_{\calG} (n) = \Theta(n\log n).
$$
\end{theorem}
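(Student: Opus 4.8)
The plan is to establish the upper bound $O(n\log n)$ and the lower bound $\Omega(n\log n)$ separately. For the upper bound, I would \emph{not} simply invoke Theorem~\ref{thm:pwb} with $\pw(n)=2$, since that would only give $O(n\log n)$ directly---wait, in fact it does, so the upper bound is immediate from Theorem~\ref{thm:pwb}. The real content is therefore the lower bound: we must exhibit, for infinitely many $n$, a connected graph $G_n$ of pathwidth exactly two on $n$ vertices whose associahedron has diameter $\Omega(n\log n)$. The natural candidate is a ``caterpillar-like'' or ``fan-like'' construction---for instance a path $v_1,\dots,v_k$ with a pendant structure, or better, a graph built to force recursive behavior: something like taking a long path and attaching, at carefully chosen vertices, small gadgets that each contribute a logarithmic factor through a halving argument. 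A clean choice is the graph obtained from a path by adding one universal-to-a-window vertex, or a ``complete caterpillar,'' but the key structural requirement is that $G_n$ decomposes recursively so that any sequence of rotations transforming one extremal search tree into another must pay $\Omega(\log n)$ per vertex on average.

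The heart of the lower bound argument would be a potential-function or information-theoretic accounting. Following the spirit of the Sleator--Tarjan--Thurston lower bound for classical associahedra and the Cardinal--Langerman--P\'erez-Lantero technique for tree associahedra, I would define two maximal tubings (search trees) $T$ and $T'$ on $G_n$ that are ``maximally scrambled'' relative to one another, and assign to each intermediate search tree a potential measuring how far it is from $T'$ in a weighted sense. The weights should be chosen so that a single rotation changes the potential by at most $O(1)$ (or $O(\log n)$ in total over the whole sequence for the amortized version), while the total potential difference between $T$ and $T'$ is $\Omega(n\log n)$. Concretely, I expect to use the pathwidth-two structure to embed $\log n$ levels of a recursive splitting: at each level, roughly half the remaining vertices must be ``reconfigured,'' and the non-adjacency constraints of $G_n$ prevent shortcuts, so the costs telescope to $\sum_{i} (n/2^i)\cdot i = \Theta(n\log n)$ by the standard merge-sort-type recurrence $D(n) \ge 2D(n/2) + \Omega(n)$.

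The main obstacle is verifying that the recursive lower bound genuinely holds---that is, proving the recurrence $\delta(\mathcal{A}(G_n)) \ge 2\,\delta(\mathcal{A}(G_{n/2})) + \Omega(n)$ rather than merely the weaker additive bound. This requires showing that when the optimal rotation sequence ``works on'' one half of the recursive structure, it cannot simultaneously make progress on the other half or on the top-level merge; formally one argues that rotations involving the ``connector'' vertices (those realizing the pathwidth-two separators) are both necessary in quantity $\Omega(n)$ and cannot be amortized against the recursive subproblems. Controlling this interaction is exactly where the argument is delicate: one must define the sub-instances as induced subgraphs $G[A]$ for suitable tubes $A$, confirm each has pathwidth at most two, and show that any short rotation path projects to short rotation paths on the sub-instances plus the claimed linear overhead. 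I would handle this via a charging scheme that partitions each rotation in the global sequence to exactly one of: the left recursive instance, the right recursive instance, or the merge, and then invoke induction on the first two parts while bounding the merge contribution from below by a direct argument about how the $\Omega(n)$ connector tubes must each be flipped. The base case and the choice of $G_n$ (ensuring connectivity and pathwidth exactly two while keeping the recursion clean) are routine once the charging scheme is in place.
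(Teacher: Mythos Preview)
Your upper bound is correct and matches the paper exactly: it is immediate from Theorem~\ref{thm:pwb} (equivalently Corollary~\ref{cor:twub}) with $\pw=2$.

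For the lower bound, your high-level plan---build a pathwidth-two graph $G_n$ that splits into two copies of $G_{n/2}$, use a projection lemma to charge rotations to the left half, the right half, or the ``merge,'' and obtain the recurrence $\ell(n)\ge 2\ell(n/2)+\Omega(n)$---is precisely the skeleton the paper uses. The paper formalizes the projection step via the Projection Lemma (Lemma~\ref{lem:projection}) on \emph{convex} subsets of chordal graphs, which gives exactly the three-way partition you describe (monochromatic rotations project to the two subproblems; bichromatic rotations are the merge cost).

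Where your proposal has a real gap is in the two steps you label ``routine.'' First, the construction of $G_n$ is not arbitrary: the paper uses a specific ladder-like interval graph on vertices $a_1,\dots,a_n,b_1,\dots,b_n$, where the $b$-path serves as a backbone that keeps the induced subgraphs connected even when the $a_i$'s are taken in an arbitrary order. A generic caterpillar or fan will not have this property, and without it you cannot realize the target search tree $T'$ below as a valid search tree. Second, the $\Omega(n)$ lower bound on the merge cost is not obtained by a potential function or by counting ``connector tubes that must be flipped''---that kind of argument tends to collapse to $O(n)$ total rather than $\Omega(n)$ \emph{per level}. The paper's device is to take $T'$ to be the path-shaped search tree whose first $n$ vertices are $a_{\sigma_k(1)},a_{\sigma_k(2)},\dots,a_{\sigma_k(n)}$ for the \emph{bit-reversal permutation} $\sigma_k$, followed by $b_1,\dots,b_n$. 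One then defines the \emph{alternation number} of a search tree (the maximum number of $L$/$R$ color changes on any root-to-leaf path) and checks that (i) monochromatic rotations never increase it, (ii) a bichromatic rotation increases it by at most $2$, and (iii) it jumps from $1$ in $T$ to $n+1$ in $T'$. This forces at least $n/2$ bichromatic rotations, and the recursion goes through because bit-reversal restricts to bit-reversal on each half.

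So your outline is on the right track but understates the difficulty: the choice of $G_n$, the bit-reversal target tree, and the alternation-number invariant are the actual content of the argument, not bookkeeping to be filled in later.
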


In Section~\ref{sec:split}, we consider the diameter of complete split graph associahedra. The complete split graph $\mathrm{SPK}_{p,q}$ is a graph whose vertex set can be partitioned into a subset $P$ of $p$ vertices and a subset $Q$ of $q$ vertices inducing a clique and an independent set, respectively, in such a way that every vertex of $P$ is connected by an edge to every vertex of $Q$. In particular, the number of edges of $\mathrm{SPK}_{p,q}$ is precisely
$$
m=pq+{p\choose2}.
$$

We give the exact diameter of the associahedra of these graphs.
\begin{theorem}\label{thm:csga}
If $q \geq 4p+1$ then,
$$
\delta(\mathcal{A}(\mathrm{SPK}_{p,q})) = 2pq + {p\choose 2} = 2m - {p\choose 2}
$$
and otherwise,
$$
\delta(\mathcal{A}(\mathrm{SPK}_{p,q})) = pq+\left\lfloor\frac{1}{2}{q\choose 2}\right\rfloor+{p\choose 2}\leq 2m.
$$
\end{theorem}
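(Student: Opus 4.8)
I would organize the argument around a structural description of the search trees on $\mathrm{SPK}_{p,q}$, an explicit routing scheme for the upper bound, and a potential argument for the matching lower bound.

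\emph{Step 1: structure of search trees.} Deleting a vertex of $Q$ from $\mathrm{SPK}_{p,q}$ leaves the connected graph $\mathrm{SPK}_{p,q-1}$, while deleting a vertex of $P$ leaves $\mathrm{SPK}_{p-1,q}$ when $p\ge 2$ and an independent set on $Q$ when $p=1$. Hence every search tree on $\mathrm{SPK}_{p,q}$ is a \emph{caterpillar}: a root-to-leaf path (the \emph{spine}) containing all $p$ vertices of $P$ in some order, with some vertices of $Q$ interleaved among them, and with the remaining vertices of $Q$ attached as leaves — forming the \emph{bag} — to the last vertex of $P$ on the spine. I would encode a vertex of $\mathcal A(\mathrm{SPK}_{p,q})$ by the linear order it induces on $P$, the ordered subset $S\subseteq Q$ lying on the spine (with the convention that a single vertex of $Q$ below the last vertex of $P$ counts as part of the bag, so that the spine terminates at a vertex of $P$), and the bag $Q\setminus S$. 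Unwinding the definition of a rotation on this family, every rotation is of exactly one of two kinds: (i) a transposition of two consecutive spine vertices, or (ii) a move of a single vertex of $Q$ between the bag and the slot immediately above the last vertex of $P$ on the spine. In particular one rotation alters the order on $P$ by at most one adjacent transposition, alters for each vertex of $Q$ the number of vertices of $P$ above it by at most one, and flips the orientation of at most one edge of $\mathrm{SPK}_{p,q}$ — where an edge is oriented from its ancestor endpoint to its descendant endpoint, which is well defined since adjacent vertices are comparable in any search tree.

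\emph{Step 2: upper bound.} To bound the number of rotations between arbitrary vertices $\mathcal T$ and $\mathcal T'$, I would route $\mathcal T$ to $\mathcal T'$ through an intermediate configuration parameterized by a threshold $t\in\{0,\dots,q\}$: first bring the $P$-order into agreement with that of $\mathcal T'$ and funnel $t$ chosen vertices of $Q$ into the bag, then rebuild $\mathcal T'$ by sorting the $q-t$ remaining vertices of $Q$ in place on the spine and extracting the $t$ bagged vertices one at a time to their target positions. Reordering $P$ costs at most $\binom p2$; each vertex of $Q$ that traverses the whole clique-block costs $p$, and traversing it into the bag and back costs $2p$; sorting $k$ vertices of $Q$ at a common level costs at most $\binom k2$. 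Optimizing $t$ trades a term of order $2tp$ against a term of order $\binom{q-t}{2}$, and since $\binom k2\ge 2kp$ exactly when $k\ge 4p+1$, the optimum is $t=q$ — route everything through the bag — precisely when $q\ge 4p+1$, giving the bound $2pq+\binom p2$; for $q\le 4p$ an interior optimum gives $pq+\lfloor\frac12\binom q2\rfloor+\binom p2$. Carrying out the bookkeeping — tracking how the $Q$-levels drift while $P$ is reordered, and checking that crossings do not obstruct one another — is the laborious but routine part of this step.

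\emph{Step 3: lower bound and the main obstacle.} It remains to exhibit, in each regime, a pair at exactly the claimed distance and to prove optimality. In both cases I would take $\mathcal T_1$ with spine $(Q_1,\dots,Q_q,P_1,\dots,P_p)$ and empty bag. For $q\ge 4p+1$ I would take $\mathcal T_2$ with spine $(Q_q,\dots,Q_1,P_p,\dots,P_1)$ and empty bag, so that reversing both blocks forces the bound; for $q\le 4p$ I would take $\mathcal T_2$ to be the configuration that reverses the clique-block, lifts its deepest vertex over the entire $Q$-block, and installs a reversed suffix of $\lfloor\frac12\binom q2\rfloor$ vertices of $Q$ just above the last vertex of $P$ with the complementary vertices of $Q$ in the bag. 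The delicate point is the matching lower bound: counting flipped edge orientations only recovers the Manneville–Pilaud estimate of order $m$, so the potential must additionally charge the collective displacement of $Q$ through the bag. I would combine, for the critical vertices, the symmetric difference of their ancestor sets with a suitably weighted count of $Q$-inversions that no single rotation — in particular no bag move — can decrease by more than one, and show that the extremal pair forces this combined potential to equal $2pq+\binom p2$, respectively $pq+\lfloor\frac12\binom q2\rfloor+\binom p2$. \textbf{Extracting both the leading constant $2$ and the exact floor term from a single monotone potential is the main obstacle}; this is the same kind of subtlety that makes the exact diameters of the classical associahedron and of tree associahedra hard, and it is where I expect most of the work to lie.
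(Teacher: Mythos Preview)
Your structural analysis in Step~1 and the routing idea in Step~2 are broadly in line with the paper's argument (the paper calls your caterpillars \emph{brooms}). For the upper bound the paper is slightly simpler than your threshold-$t$ scheme: it takes only the two extreme paths, ``push every $Q$-vertex down into the bag'' and ``push every $Q$-vertex above all of $P$'', computes their lengths as $\binom p2 + 2pq - W$ and $\binom p2 + \binom q2 + W$ for a single parameter $W$ depending on the pair, and takes the minimum. No intermediate $t$ is needed, and the floor falls out automatically.

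The real gap is the lower bound. You are right that a single monotone potential in the style of edge-orientation counting cannot reach the leading constant $2$, but the paper does \emph{not} build such a potential. Instead, for a fixed shortest rotation sequence it introduces the set $L\subseteq Q$ of independent-set vertices that appear at least once as a leaf along the way and sets $k=|L|$. Rotations are then counted by type: at least $\binom p2$ to invert $P$, at least $\binom{q-k}{2}$ to invert $Q\setminus L$, at least $k(q-k)$ for each $L$-vertex to cross each $Q\setminus L$-vertex, and at least $k(2p-\alpha-1)$ for each $L$-vertex to pass the relevant $P$-vertices once on the way down and once on the way back up. The resulting lower bound $f(k)$ is a concave quadratic in $k$, so it is minimized at $k=0$ or $k=q$, and comparing those two endpoint values is exactly what produces the dichotomy at $q=4p+1$. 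This ``classify by whether a $Q$-vertex ever becomes a leaf, then use concavity in $k$'' step is the idea you are missing; it replaces the hoped-for monotone potential entirely.

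Your extremal pair for $q\le 4p$ is also off. A ``suffix of $\lfloor\frac12\binom q2\rfloor$ vertices of $Q$'' makes no sense once $\binom q2>2q$. The paper instead introduces two integer parameters $\alpha<p$ and $\beta<q$: in $T_1$ the first $\alpha$ vertices of $P$ sit \emph{above} the $Q$-block, and in $T_2$ the single vertex $x_p$ is lifted to sit between $y_{\beta+1}$ and $y_\beta$. Choosing $\alpha=\lfloor(4p+1-q)/4\rfloor$ and $\beta$ to absorb the fractional part is what makes the two endpoint values $f(0)$ and $f(q)$ coincide with $pq+\lfloor\frac12\binom q2\rfloor+\binom p2$. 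So the asymmetry that drives the small-$q$ case lives in $T_1$ (shifting part of $P$ above $Q$) as much as in $T_2$, which your proposed pair does not capture.
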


We recover the diameter of stellohedra~\cite{MP15} as a special case of Theorem \ref{thm:csga}.

Finally, in Section~\ref{sec:cb}, we provide a general upper bound on the diameter of the associahedron of the complete bipartite graph $\mathrm{K}_{p,q}$ and show that this bound is tight when the graph is sufficiently unbalanced.

\begin{theorem}
\label{teo1-sp}
If $q \geq 4p+1$, then $\delta(\mathcal{A}(\mathrm{K}_{p,q})) = 2pq$.
\end{theorem}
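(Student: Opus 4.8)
Write $P$ and $Q$ for the two sides of $\mathrm K_{p,q}$, of sizes $p$ and $q$. Since $\mathrm K_{p,q}\subseteq\mathrm{SPK}_{p,q}$, Theorem~\ref{thm:monot} together with Theorem~\ref{thm:csga} already gives $\delta(\mathcal A(\mathrm K_{p,q}))\leq 2pq+\binom{p}{2}$, so the work is to shave off the $\binom{p}{2}$ and to prove a matching lower bound $\delta(\mathcal A(\mathrm K_{p,q}))\geq 2pq$. For the upper bound I would route any two search trees $S,T$ through a \emph{hub} $H$ in which the vertices of $P$ form a path descending from the root and the vertices of $Q$ form a single unordered broom below the lowest $P$-vertex (a valid search tree, since removing the last $P$-vertex from $\mathrm K_{1,q}$ leaves $q$ isolated vertices). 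The point is that $P$ is an independent set, so its vertices may rest in an unordered broom and never need to be sorted against one another: one reaches $H$ from $S$ by lifting the vertices of $P$ to the top one at a time, in a top-down order of the partial order they induce in $S$, so that each lift of a vertex $w\in P$ only rotates $w$ past its $Q$-ancestors. The number of rotations is then the number of edges $\{w,u\}$ with $w\in P$, $u\in Q$, and $u$ an ancestor of $w$ in $S$, which is at most $pq$. Running the same process backwards turns $H$ into $T$ in at most $pq$ rotations, where one is free to choose the order in which the $Q$-vertices leave the broom, so no $Q$-against-$Q$ transposition is needed; hence $d(S,T)\leq 2pq$ by the triangle inequality. (When $S$ and $T$ carry disagreeing $P$-chains one instead reverses the chain directly by $\binom{p}{2}\leq 2pq$ adjacent transpositions.)

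For the lower bound I would take $S$ to be the search tree with spine $u_1-u_2-\cdots-u_q$ and $P$ hanging as a broom below $u_q$, and $T$ the search tree with spine $u_q-u_{q-1}-\cdots-u_1$ and $P$ hanging as a broom below $u_1$. Both lie in the ``all bad'' slice in which every bipartite edge has its $Q$-endpoint above its $P$-endpoint, so the potential that simply counts bad edges is of no use here. Passing from $S$ to $T$ amounts to reversing the order of the $Q$-vertices on the spine, which can be done directly by $\binom{q}{2}$ spine transpositions, or by the detour of lifting all of $P$ to the top (turning $Q$ into a broom), permuting, and pushing $P$ back down, at cost $2pq$; the hypothesis $q\geq 4p+1$ is exactly the inequality $\binom{q}{2}\geq 2pq$, i.e. it says that the detour is no costlier than the direct route. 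The claim is that $2pq$ is optimal, and I would prove it by constructing a potential $\Phi$ on search trees, depending on $S$ and $T$, that changes by at most one under any rotation and satisfies $\Phi(T)-\Phi(S)=2pq$. Roughly, $\Phi$ should combine the number of currently good bipartite edges with a measure of how far the $Q$-vertices are from their $T$-order that is \emph{capped}, so that a single broom rotation --- which can reorder many $Q$-vertices at once, but only after they have been parked under a common $P$-vertex --- cannot make bulk progress; this is in the spirit of the lower-bound argument behind Theorem~\ref{thm:csga}.

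The main obstacle is the construction and verification of $\Phi$: one must rule out, uniformly, shortcuts that partly sort the $Q$-spine directly, shortcuts that detour through lifted $P$-vertices, and arbitrary mixtures of the two, so the one-Lipschitz estimate has to be checked against every type of rotation --- spine transpositions, broom rotations that pull a vertex above an entire broom, and rotations that exchange a $P$-vertex with a $Q$-vertex. The upper bound is comparatively routine but still needs care: one must interleave the ``lift'' and ``push-down'' phases so that no incidental $Q$-against-$Q$ transposition is forced, handle degenerate brooms of size one, and treat the disagreeing-$P$-chain case as above. Combining the two bounds, and noting that $q\geq 4p+1$ is exactly the range in which they coincide, yields $\delta(\mathcal A(\mathrm K_{p,q}))=2pq$.
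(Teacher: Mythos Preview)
Your upper bound argument has a genuine gap. Routing through a single hub $H$ with $P$ on top and $Q$ as leaves only yields $2pq+\binom{p}{2}$, not $2pq$. The hub you reach from $S$ has the $P$-vertices in the order inherited from $S$, while ``running the process backwards'' from $T$ reaches a hub with the $P$-vertices in $T$-order; reconciling the two costs up to $\binom{p}{2}$ extra rotations on top of $w_S+w_T$. Your parenthetical that $\binom{p}{2}\leq 2pq$ does not help, since this cost is added, not substituted. Concretely, take $S$ with handle $y_1,\ldots,y_{q-1},x_1,\ldots,x_p$ and single leaf $y_q$, and $T$ with handle $y_q,\ldots,y_2,x_p,\ldots,x_1$ and single leaf $y_1$: then $w_S=w_T=p(q-1)$, the two hubs differ by the full $\binom{p}{2}$, and your bound gives $2pq+p(p-5)/2>2pq$ once $p\geq 6$. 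The paper closes this gap by using hubs of \emph{both} kinds. For any broom $B$ let $X_B$ have $P$ on top in $B$-order with $Q$ as leaves, and $N_B$ have $Q$ on top in $B$-order with $P$ as leaves; then $\mathrm{dist}(B,X_B)+\mathrm{dist}(B,N_B)\leq pq$ (Lemma~\ref{lemme-dist-sp}), and $\mathrm{dist}(X_{T_1},N_{T_2})=\mathrm{dist}(N_{T_1},X_{T_2})=pq$ because lifting one side entirely above the other turns the lower side into an unordered set of leaves, so its previous order is forgotten for free. A short case analysis then shows that the shorter of the two paths $T_1\to X_{T_1}\to N_{T_2}\to T_2$ and $T_1\to N_{T_1}\to X_{T_2}\to T_2$ has length at most $2pq$.

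For the lower bound you pick the same pair of brooms as the paper (the $Q$-spine in opposite orders, $P$ as leaves), but you only \emph{promise} a $1$-Lipschitz potential $\Phi$ and acknowledge that constructing it is the main obstacle. The paper does not use a potential; it counts directly. In a shortest sequence, let $L\subseteq Q$ be the vertices of $Q$ that appear as a leaf at some point, and $k=|L|$. Since the handle always contains all of $P$ or all of $Q$, if $k>0$ then all of $P$ must enter the handle before any $Q$-vertex can leave it; hence each vertex of $L$ must pass every vertex of $P$ once on the way down and once on the way back up, giving at least $2pk$ rotations of that type. The $q-k$ vertices of $Q\setminus L$ stay in the handle throughout and must reverse their relative order, contributing $\binom{q-k}{2}$ rotations, and each vertex of $L$ must cross every vertex of $Q\setminus L$, contributing $k(q-k)$ more. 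The total $2pk+k(q-k)+\binom{q-k}{2}$ is concave in $k$, hence minimized at an endpoint; its values there are $\binom{q}{2}$ and $2pq$, and $q\geq 4p+1$ is precisely the condition $2pq\leq\binom{q}{2}$. This is both simpler and sharper than the potential-function route you sketch.
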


\section{Associahedra of trivially perfect graphs}
\label{sec:tp}

A graph is {\em trivially perfect} if it is both a {\it cograph} and an {\it interval graph}. Wolk \cite{Wol62} called these graphs {\it comparability graphs of trees} and gave characterizations for them. Golumbic \cite{Gol78} called them {\it trivially perfect graphs} because it is trivial to show that such a graph is a {\it perfect} graph. These graphs have the property that in each of their induced subgraphs, the size of the maximum independent set is also the number of maximal cliques. 

\subsection{Trivially perfect graphs}

A {\it universal vertex} in a graph $G$ is a vertex which is adjacent to all the other vertices in $G$. A {\it maximal universal clique} in a graph $G$ is a maximal clique $C$ in $G$ such that each vertex in $C$ is a universal vertex in $G$. Yan et al. \cite{Yan96} give some equivalent definitions of trivially perfect graphs. In particular, they define trivially perfect graph recursively as follows: $(1)$ an isolated vertex (i.e. $\mathrm{K}_1$) is a trivially perfect graph, $(2)$ adding a new universal vertex to a trivially perfect graph results in a trivially perfect graph, and $(3)$ the disjoint union of two trivially perfect graphs is a trivially perfect graph. Drange et al. \cite{Dra15} give the following decomposition of a trivially perfect graph. Let $T$ be a rooted tree and $t$ a vertex of $T$. We denote by $T_t$ the maximal subtree of $T$ rooted at $t$.

\begin{definition}{\bf(Definition 2.3 in \cite{Dra15})}
Consider a trivially perfect graph $G= (V,E)$. A {\it universal clique decomposition} of $G$ is a pair $(T = (V_T,E_T), \mathcal{B} = \{B_t\}_{t \in V_T})$, where $T$ is a rooted tree and $\mathcal{B}$ is a partition of the vertex set $V$ into pairwise disjoint, nonempty subsets, such that
\begin{itemize}
\item if $vw \in E$, $v \in B_t$, and $w \in B_s$, then $s$ and $t$ are on a path from a leaf to the root (and, possibly $s=t$), and
\item if $t \in V_T$, then $B_t$ is the maximal universal clique in the subgraph of $G$ induced by
$$
\bigcup_{s\in V(T_t)}\!\!\!\!\!B_s.
$$
\end{itemize}
\end{definition}

The vertices of $T$ are called {\it nodes} and the sets in $\mathcal{B}$ {\it bags} of the universal clique decomposition $(T,\mathcal{B})$. Note that in a universal clique decomposition, every nonleaf node $t$ has at least two children, since otherwise the universal clique contained in the bag corresponding to $t$ would not be maximal.

Drange et al. \cite{Dra15} have shown that a connected graph $G$ admits a universal clique decomposition if and only if it is trivially perfect. Moreover, such a decomposition is unique up to isomorphism. Figure \ref{fig1tp} shows a trivially perfect graph (left) and its universal clique decomposition (center). It is well known that the tree-depth of a graph $G$ is the minimum size of the largest clique in a trivially perfect supergraph of $G$ (see \cite{Neo15}).

\begin{figure}[htp]
\centering{
\includegraphics[scale=.6]{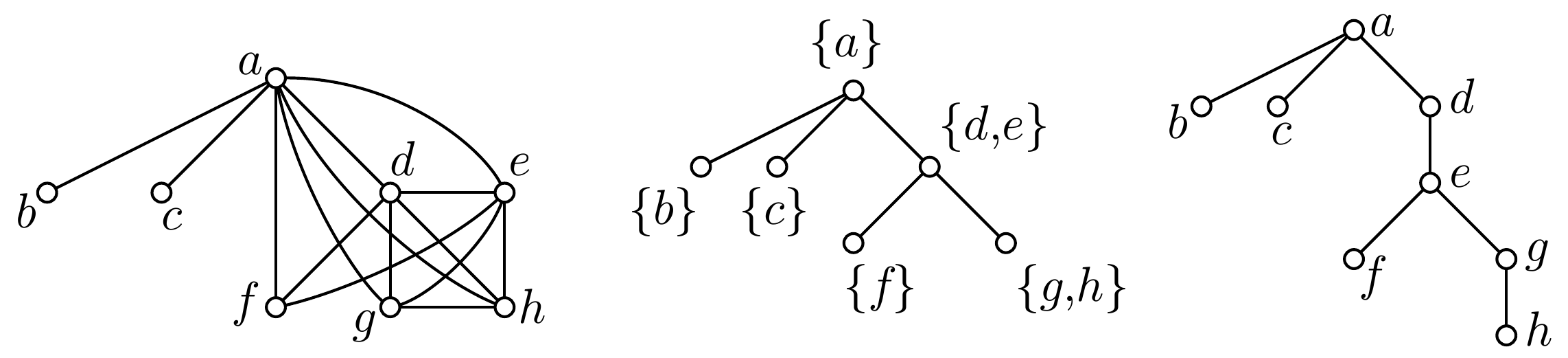}
}
\caption{A trivially perfect graph $G$ (left), the universal clique decomposition of $G$ (center), and a minimum height search tree on $G$ constructed from its universal clique decomposition (right).}
\label{fig1tp}
\end{figure}

\subsection{Upper bound}

Consider a connected trivially perfect graph $G = (V,E)$ with clique number $\omega$. One can use the universal clique decomposition $(T,\mathcal{B})$ of $G$ in order to construct a search tree $T'$ of $G$ whose height is equal to the tree-depth of $G$ and, therefore to $\omega$: let $r$ be the root of $T$ and denote by $r_1$ to $r_p$ its child nodes. Form a path $P$ of length $|B_r|$ whose vertices are the elements of $B_r$ (arranged in any order). One of the ends of this path will be the root of $T'$. Now, denote by $C_i$ the connected component of $G[V \mathord{\setminus} B_r]$ that admits $r_i$ as a subset of its vertices. The pair $(T_{r_i}, \{B_j : j \in V(T_{r_i})\})$ turns out to be the universal clique decomposition of $C_i$. Therefore, one can use the procedure recursively in order to build a search tree for each $C_i$. Connecting the root of these search trees by an edge to one of the ends of $P$ results in the announced search tree $T'$ of $G$ with height $\omega$. This procedure is illustrated in Figure \ref{fig1tp}, where the minimum height search tree is shown on the right.

\begin{theorem}
\label{teo2tp}
Let $G$ be a connected trivially perfect graph on $m$ edges. Then 
$\delta(\mathcal{A}(G)) \leq 2m$.
\end{theorem}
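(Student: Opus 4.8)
The plan is to route every pair of search trees through the single search tree $T'$ built just above the theorem and apply the triangle inequality: if $\mathrm{dist}_{\mathcal A(G)}(T,T')\le m$ holds for every search tree $T$ on $G$, then $\delta(\mathcal A(G))\le 2m$. So it suffices to bound the eccentricity of $T'$ in the skeleton of $\mathcal A(G)$ by $m$. The elementary tool is a bound on \emph{rooting} a vertex: in any search tree $S$ on a connected graph, a vertex $v$ lying at depth $d$ (the root having depth $1$) can be brought to the root with exactly $d-1$ rotations, by repeatedly rotating $v$ with its current parent, each such rotation decreasing the depth of $v$ by one; in particular this costs at most $|V(S)|-1$ rotations.

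I would then argue by induction on $n=|V(G)|$. Let $(T,\mathcal B)$ be the universal clique decomposition of $G$, let $B_r=\{v_1,\dots,v_k\}$ be the root bag arranged so that the top path of $T'$ is $v_1\to v_2\to\cdots\to v_k$, let $C_1,\dots,C_p$ be the connected components of $G-B_r$, and set $m_i=|E(G[C_i])|$. Given a search tree $T$ on $G$, first root $v_1$ in $T$; then root $v_2$ inside the subtree now hanging below $v_1$; then root $v_3$ inside the subtree below $v_2$; and so on. When we root $v_j$ we are operating inside a search tree on $G-\{v_1,\dots,v_{j-1}\}$, which has $n-j+1$ vertices, so that step costs at most $n-j$ rotations, and it never touches the edge from $v_{j-1}$ to $v_j$, so the already-built top path is preserved. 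After all of $v_1,\dots,v_k$ have been rooted, the tree is precisely the path $v_1\to v_2\to\cdots\to v_k$ with, hanging below $v_k$, a family of search trees $S_1,\dots,S_p$ on $C_1,\dots,C_p$. Since the subtree rooted at any vertex of $C_i$ is contained in $C_i$, the rotations that transform $S_i$ into the canonical tree $T'_i$ of $C_i$ involve only edges internal to $C_i$; hence they leave the top path and the other components untouched, and by the induction hypothesis they number at most $m_i$. Splicing all these moves together turns $T$ into $T'$.

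The final step is the arithmetic, and it is where the constant is pinned down. The rooting phase costs at most $\sum_{j=1}^{k}(n-j)=kn-\binom{k+1}{2}$, and the correction phase at most $\sum_{i=1}^{p}m_i$. Because $B_r$ is a clique all of whose vertices are universal in $G$, the edge set of $G$ decomposes as $m=\binom{k}{2}+k(n-k)+\sum_{i=1}^{p}m_i$, and a direct computation gives $\binom{k}{2}+k(n-k)=kn-\binom{k+1}{2}$; so the rooting phase costs at most $m-\sum_i m_i$ and the grand total is at most $m$. This closes the induction, the base case $n=1$ being trivial, and the theorem follows.

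The main obstacle is precisely this bookkeeping: a careless accounting---bounding each rooting step by $n-1$, or invoking $\delta(\mathcal A(C_i))\le 2m_i$ for the components rather than the sharper $\mathrm{dist}(S_i,T'_i)\le m_i$ from the inductive hypothesis---loses a factor and yields only $\delta(\mathcal A(G))\le 4m$ or worse, so one really needs the exact rooting cost $d-1$ together with the identity $\sum_{j=1}^{k}(n-j)=\binom{k}{2}+k(n-k)$ that matches the edges incident to the root bag exactly. Beyond that, the routine verifications are that rooting a vertex costs its depth minus one, that the component-correcting rotations are independent of one another and of the already-built top path, and that the degenerate configurations---$G$ a single clique ($p=0$), or a singleton root bag ($k=1$)---are covered by the same computation.
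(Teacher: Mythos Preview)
Your proof is correct and follows essentially the same approach as the paper: both show that the eccentricity of the canonical minimum-height search tree is at most $m$ by lifting universal vertices to the top and recursing, with the key accounting that the edges incident to those universal vertices exactly pay for the lifting. The only organizational difference is that the paper lifts a single universal vertex $r$ per step and inducts on the tree-depth (using $n-1+\sum_i |E(G[C_i])|=m$), whereas you lift the entire root bag $B_r$ at once and induct on $n$; your version unrolls what the paper does across $|B_r|$ consecutive inductive steps.
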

\begin{proof}
Let $td(G) = \omega(G) = k$. We prove by induction on $k$ that any search tree on $G$ can be transformed into a search tree of height $k$ in at most $m$ rotations. When $k = 2$, the graph $G$ has a single edge and the result is immediate. Let $S$ be a search tree on $G$ of height $k$ with root $r$ and denote by $r_1$ to $r_p$ the child vertices of $r$ in $S$. Such a search tree can be obtained from the universal clique decomposition tree of $G$ as described above. Let $T$ be another search tree on $G$. Then, there exists a sequence of at most $n-1$ rotations that transform $T$ into a tree $P$ where $r$ has been lifted at the root. Clearly, $r$ belongs to a maximal universal clique in $G$ and thus, $G \mathord{\setminus} r$ has $p$ connected components, say $C_1$ to $C_p$, each inducing a trivially perfect subgraph of $G$ with clique number at most $k-1$. By the definition of the universal clique decomposition, each subtree $S_{r_i}$ is a search tree of height at most $k-1$ on $G[C_i]$. Denote by $s_1$ to $s_p$ the child vertices of $r$ in $P$. By induction, there exists a sequence of at most $|E(G[C_i])|$ rotations that transform the tree $P_{s_i}$, rooted at $s_i$, into the subtree $T_{r_i}$.

Therefore, $T$ can be transformed into $S$ in at most
$$
n-1+\sum_{1 \leq i \leq p}|E(G[C_i])| = m
$$
rotations. Since any search tree on $G$ can be transformed into $S$ in at most $m$ rotations, $\delta(\mathcal{A}(G)) \leq 2m$.
\end{proof}

Theorem~\ref{teo1tp} is a consequence of Theorems~\ref{thm:mlb} and~\ref{teo2tp}. 

\section{Diameter, tree-depth, and treewidth}
\label{sec:td}
In this section, we establish tight bounds on the diameter of graph associahedra in terms of the tree-depth of the underlying graph. We will make use of the following.

\begin{lemma}
  \label{lem:mtp}
  Let $H$ be a trivially perfect graph on $n$ vertices and $m$ edges, with tree-depth $\td$. Then $m < \td\cdot n$.
\end{lemma}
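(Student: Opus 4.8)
The plan is to argue by induction on the tree-depth $\td$, using the universal clique decomposition $(T,\mathcal{B})$ of $H$ as the inductive scaffold. The base case $\td = 1$ is immediate: a trivially perfect graph of tree-depth one is a single vertex, so $m = 0 < 1 \cdot n$. For the inductive step, let $r$ be the root of $T$ and let $B_r$ be the corresponding bag, which is the maximal universal clique of $H$. Write $k = |B_r|$, so that $H$ has a clique on $k$ universal vertices and every remaining vertex is adjacent to all of $B_r$. The connected components $C_1,\dots,C_p$ of $H - B_r$ are themselves trivially perfect, with tree-depth at most $\td - 1$, since removing the universal clique decreases the clique number (equivalently the tree-depth) by exactly $k$ in the worst case — but crucially by at least $1$.

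The key step is to split the edge count $m$ into three parts: the $\binom{k}{2}$ edges inside $B_r$; the $k \cdot (n - k)$ edges joining $B_r$ to the rest; and the edges $m_i := |E(H[C_i])|$ inside each component. Writing $n_i := |V(C_i)|$, so that $\sum_i n_i = n - k$, the induction hypothesis gives $m_i < (\td - 1) n_i$ for each $i$ (when $\td - 1 \geq 1$; if some $C_i$ is a single vertex the bound $m_i = 0 < (\td-1)\cdot 1$ still holds as long as $\td \geq 2$). Summing,
$$
m = \binom{k}{2} + k(n-k) + \sum_{i=1}^{p} m_i < \binom{k}{2} + k(n-k) + (\td - 1)(n - k).
$$
It then remains to verify the elementary inequality
$$
\binom{k}{2} + k(n-k) + (\td - 1)(n-k) \leq \td \cdot n,
$$
which after rearranging amounts to checking that $\binom{k}{2} + k(n-k) \leq \td\cdot k + (n - k)$ is comfortably satisfied — and here one uses $\td \geq k$ (the tree-depth of $H$ is at least the size of its largest clique) together with $k \geq 1$ and $n \geq k$. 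A short case check — for instance, $k = 1$ reduces to $0 + (n-1) \le \td + (n-1)$, and $k \ge 2$ gives more than enough slack from the $\td\cdot k$ term against $\binom{k}{2} + k(n-k)$ when $n$ is not too large relative to $k$, while the $k(n-k)$ term is dominated by comparing against $\td\cdot n \ge k\cdot n$ — closes the argument.

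The main obstacle I anticipate is bookkeeping at the boundary between the two ways the parameters can be extreme: when $k$ is large (close to $n$) the term $\binom{k}{2}$ is the dangerous one and must be absorbed by $\td \cdot n \geq k \cdot n$; when $k$ is small but there are many components, it is the recursive sum $\sum m_i < (\td-1)(n-k)$ that does the work. Making sure the single induction handles both regimes simultaneously — and in particular that the strict inequality is preserved (it is, since at least one of the three summands is strictly bounded, e.g. $\sum m_i < (\td-1)(n-k)$ is strict whenever $n > k$, and when $n = k$ the graph is the clique $\mathrm{K}_k$ with $\binom{k}{2} < k\cdot k = \td\cdot n$) — is the delicate part, but it is entirely elementary.
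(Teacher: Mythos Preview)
Your inductive scaffold is sound, but the final ``elementary inequality'' you defer to a case check is in fact false as stated, and the hand-waving there hides a genuine gap. You apply the induction hypothesis to each component $C_i$ in the weak form $m_i < (\td - 1)\,n_i$, and you then need
\[
\binom{k}{2} + k(n-k) \leq \td\cdot k + (n-k).
\]
Take $H = \mathrm{SPK}_{2,6}$: here $n = 8$, the root bag has $k = 2$ universal vertices, the six components of $H - B_r$ are isolated vertices, and $\td = \omega(H) = 3$. The inequality reads $1 + 12 \leq 6 + 6$, which fails. Equivalently, your intermediate bound gives $m < 1 + 12 + 2\cdot 6 = 25$, whereas $\td\cdot n = 24$; the lemma still holds ($m = 13$), but your chain of inequalities does not.

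The fix is to use the full strength of what the universal clique decomposition gives you: removing the whole bag $B_r$ drops the clique number by exactly $k$, so $\td(C_i) \leq \td - k$ and hence $\sum_i m_i < (\td - k)(n-k)$. Then
\[
m < \binom{k}{2} + k(n-k) + (\td - k)(n-k) = \binom{k}{2} + \td\,(n-k),
\]
and it only remains to check $\binom{k}{2} \leq \td\cdot k$, which is immediate from $\td \geq k$. Alternatively, the paper's proof sidesteps the bookkeeping entirely by peeling off a single universal vertex $r$ rather than the whole bag: then $m = (n-1) + |E(H-r)|$ exactly, $H-r$ has tree-depth $\td - 1$, and induction gives $m < (n-1) + (\td-1)(n-1) = \td\,(n-1) < \td\cdot n$ in one line.
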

\begin{proof}
This is easily proved by induction. First observe that, if $\td=1$, then $n=1$ and $m=0$. Now assume that the statement holds for all graphs $H$ with less than $n$ vertices and tree-depth less than $\td$. If $r$ is the root of a search tree $T$ on $H$ of height $\td$, then $m=n-1+|E(H-r)|$. By induction,
$$
|E(H-r)|< (\td -1)(n-1).
$$

As a consequence, $m\leq \td\cdot (n-1) < \td\cdot n$, as desired.
  \end{proof}

\begin{theorem}
  \label{thm:tdub}
Let $G$ be a graph on $n$ vertices, of tree-depth at most $\td$. Then
$\delta (\mathcal A (G)) \leq 2\cdot\td\cdot n$.
\end{theorem}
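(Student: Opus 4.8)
The plan is to reduce to the trivially perfect case already handled by Theorem~\ref{teo2tp} and Lemma~\ref{lem:mtp}, using the monotonicity of the diameter from Theorem~\ref{thm:monot}. The point is that tree-depth is precisely the parameter that controls the ``trivially perfect closure'' of a graph, so bounding $\td(G)$ is the same as placing $G$ inside a trivially perfect graph of small clique number, on which the diameter is already understood.

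First I would invoke the characterization of tree-depth recalled above: since $G$ has tree-depth at most $\td$, there is a trivially perfect graph $H$ on the same vertex set $V(G)$, obtained from $G$ only by adding edges, whose clique number satisfies $\omega(H) = \td(G) \leq \td$. Because $H$ is trivially perfect, its tree-depth equals its clique number, so $\td(H) = \omega(H) \leq \td$ as well, and $H$ still has $n$ vertices. Thus $G$ is a subgraph of $H$, and $H$ is a trivially perfect graph on $n$ vertices with tree-depth at most $\td$.

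Next, by Theorem~\ref{thm:monot} we have $\delta(\mathcal{A}(G)) \leq \delta(\mathcal{A}(H))$, and by Theorem~\ref{teo2tp} applied to the trivially perfect graph $H$ we have $\delta(\mathcal{A}(H)) \leq 2m_H$, where $m_H = |E(H)|$. Finally, Lemma~\ref{lem:mtp} applied to $H$ gives $m_H < \td(H)\cdot n \leq \td\cdot n$. Chaining these three inequalities yields $\delta(\mathcal{A}(G)) \leq 2m_H < 2\cdot\td\cdot n$, which is in fact slightly stronger than the claimed bound.

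I do not expect any real obstacle here: the only subtlety is to make sure the trivially perfect graph witnessing the tree-depth lives on the same vertex set as $G$ — which it does, since it is obtained by edge additions only — and that passing from $G$ to $H$ does not increase the quantity fed into Lemma~\ref{lem:mtp}, which is exactly the combination of the identity $\td(H)=\omega(H)$ for trivially perfect $H$ with the ``tree-depth $=$ minimum clique number over trivially perfect supergraphs'' fact. Everything else is just assembling results proved earlier in the section.
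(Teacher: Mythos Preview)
Your proof is correct and follows exactly the same route as the paper's own proof: pass to a trivially perfect supergraph $H$ witnessing the tree-depth, apply monotonicity (Theorem~\ref{thm:monot}), bound $\delta(\mathcal{A}(H))$ by $2m_H$ via Theorem~\ref{teo2tp}, and finish with the edge bound $m_H<\td\cdot n$ from Lemma~\ref{lem:mtp}. Your observation that the resulting inequality is actually strict is a fine bonus, but otherwise there is nothing to add.
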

\begin{proof}
By definition, there exists a trivially perfect supergraph $H$ of $G$ with clique number $\td$. By Lemma~\ref{lem:mtp}, $H$ has at most $\td\cdot n$ edges. Hence, Theorems~\ref{thm:monot} and \ref{teo2tp} yield $\delta(\mathcal{A}(G))\leq \delta(\mathcal{A}(H)) \leq 2\cdot\td\cdot n$.
\end{proof}

Let us prove that this bound is tight up to a constant factor for a wide range of values of $\td(G)$.
\begin{theorem}
  \label{thm:tdlb}
For any two positive integers $k$ and $n$ such that $k$ divides $n$, there exists a trivially perfect graph $G$ on $n+1$ vertices such that $\td (G) = n/k + 1$ and $\delta (\mathcal{A} (G))\geq \td (G)\cdot n/2$. 
\end{theorem}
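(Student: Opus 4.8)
The plan is to construct $G$ explicitly as a disjoint union of $k$ cliques, each on $n/k$ vertices, together with one extra universal vertex joined to everything; this graph is trivially perfect (it is obtained from the empty graph by repeatedly adding universal vertices inside each clique and then taking a disjoint union, followed by one final universal vertex), and its tree-depth is clearly $n/k+1$ since a minimum-height search tree peels the universal vertex, then independently handles each clique of size $n/k$, each of which has tree-depth exactly $n/k$. Call the universal vertex $u$ and the cliques $Q_1,\dots,Q_k$. The target lower bound on $\delta(\mathcal{A}(G))$ is $\td(G)\cdot n/2 = (n/k+1)\cdot n/2$.

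To exhibit two search trees that are far apart, I would take $S$ to be a ``balanced'' search tree: root $u$, and below $u$ a path through all of $Q_1$, then below the bottom of that path the root of a path through $Q_2$, and so on—actually a cleaner choice is to make $S$ be the tree in which $u$ is the root and the $k$ cliques hang as $k$ separate paths (children of $u$), and $T$ be the tree in which $u$ is pushed down so that the cliques are ``interleaved'' along a single long path. The key is to pick $S$ and $T$ so that any rotation sequence between them must, for each of the $k$ cliques and for each of its roughly $n/k$ vertices, perform work proportional to the depth at which $u$ sits relative to that clique. First I would set up a potential-function or counting argument: associate to each search tree on $G$ a quantity measuring, for each vertex $v$ of each clique $Q_i$, whether $v$ is an ancestor of $u$ or not (equivalently, whether the tube of $v$ contains $u$). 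Each rotation changes this ancestry relationship for at most one pair, so if $S$ and $T$ disagree on the ancestry status of $\Omega(n)$ pairs $(v,u)$, and moreover moving $u$ past one clique forces it past all $n/k$ of that clique's vertices, we get a bound of the right order.

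More concretely, I expect the cleanest route is to invoke monotonicity (Theorem~\ref{thm:monot}) together with a direct lower-bound argument on a well-chosen subgraph, or else to adapt the ``tube containing $u$'' invariant: in $\mathcal{A}(G)$, the tubes are exactly the subtrees of search trees, and a rotation flips exactly one tube. I would count the tubes that must appear along the path in the skeleton. For the two trees $S$ and $T$ chosen so that in $S$ the vertex $u$ is at the top (so $u$ lies in every tube of size $>$ its own subtrees, i.e.\ $u$ is an ancestor of every clique vertex) while in $T$ the vertex $u$ is buried deep inside one clique-path arrangement (so $u$ is a descendant of many clique vertices), the number of pairs $(v,u)$ whose ancestor relationship differs is $\Theta(n)$; but additionally, because within each clique all $n/k$ vertices are mutually adjacent, once $u$ is a descendant of one clique-vertex of $Q_i$ it is forced (by connectivity of tubes) into a specific nesting with all of $Q_i$, so the ``reversal cost'' for clique $Q_i$ is on the order of $(n/k)^2$ rather than $n/k$; summing over $k$ cliques gives $k\cdot(n/k)^2 = n^2/k$, plus a lower-order $\Theta(n)$ term for moving $u$ itself, for a total of $\Omega(n^2/k) = \Omega(\td(G)\cdot n)$, which is the desired bound up to the constant $1/2$.

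The main obstacle will be making the lower bound on the rotation distance fully rigorous: it is not enough to observe that $\Omega(n^2/k)$ ancestry pairs differ, because a single clever sequence of rotations might in principle fix many pairs at once. The fix is to find a monovariant that genuinely decreases by at most a bounded amount per rotation; the natural candidate is the number of inversions between the cyclic/linear order in which clique-vertices appear along a root-to-leaf branch versus their order in the target, combined with a term tracking the position of $u$. I would prove that a single rotation changes this monovariant by $O(1)$ (a rotation only locally reattaches one subtree, so it can resolve only a bounded number of inversions), and that its value on $S$ versus $T$ differs by $\Omega(n^2/k)$ by our explicit construction. Matching the precise constant $1/2$ in $\td(G)\cdot n/2$ will then just require tuning the sizes and the exact choice of $S$ and $T$ (for instance making $S$ and $T$ ``mirror images'' within each clique so the inversion count is maximal), and checking the arithmetic $(n/k+1)\cdot n / 2 \le$ (bound obtained), which I would defer as a routine calculation.
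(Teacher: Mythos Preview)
Your graph is exactly the one the paper uses: $k$ cliques of size $n/k$ glued at a single universal vertex $u$, giving $n+1$ vertices and tree-depth $n/k+1$. But you are working far too hard on the lower bound. You never stop to count the edges of $G$: each of the $k$ cliques (including $u$) has $\binom{n/k+1}{2}$ edges, and these edge sets are disjoint, so
\[
m \;=\; k\binom{n/k+1}{2} \;=\; \frac{n^2}{2k}+\frac{n}{2} \;=\; \td(G)\cdot\frac{n}{2}.
\]
Now Theorem~\ref{thm:mlb} (the Manneville--Pilaud bound $\delta(\mathcal{A}(G))\ge m$) finishes the proof in one line, with the exact constant $1/2$ and no need to exhibit any pair of search trees.

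Your proposed route---choosing explicit $S$ and $T$, tracking ancestry of $u$, and designing a monovariant that changes by $O(1)$ per rotation---is not wrong in spirit, but as written it has a real gap: you assert that a single rotation resolves only $O(1)$ inversions, yet in a clique a rotation can reattach many children at once, so the ``bounded change'' claim needs a careful argument you have not supplied. Even if patched, the whole apparatus is unnecessary here; the edge-count lower bound already encodes exactly the inversion count you are trying to reconstruct by hand.
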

\begin{proof}
Consider the graph $G$ composed of $k$ cliques $C_1$, $C_2$, \ldots, $C_k$, each of size $n/k+1$, such that a designated vertex $v$ is the unique vertex common to $C_i$ and $C_j$ when $i$ and $j$ are distinct.

Clearly, $G$ is trivially perfect, $\td (G) = n/k + 1$, and the number of edges of $G$ is
$$
k\cdot {n/k+1\choose 2}=\frac{n^2}{2k}+\frac{n}{2}=td(G)\cdot\frac{n}{2}.
$$

The desired bound on the diameter of $\mathcal{A}(G)$ therefore follows from the lower bound stated by Theorem~\ref{thm:mlb}.
\end{proof}

From this construction, we obtain families of polyhedra parameterized by a function $\td (n)$ that interpolate between the stellohedron (a star has tree-depth two) and the permutohedron (a complete graph has tree-depth $n$).

One obtains Theorem~\ref{thm:tdtight} by combining Theorems~\ref{thm:tdub} and \ref{thm:tdlb}.

\begin{corollary}
  \label{cor:twub}
  If $G$ has treewidth at most $\tw$, then
  $\delta (\mathcal A(G)) \leq c\cdot\tw \cdot n\log n$
  for some constant $c$.
\end{corollary}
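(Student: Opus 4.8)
The plan is to reduce the treewidth case to the tree-depth case via the standard fact that a graph of treewidth $\tw$ on $n$ vertices has tree-depth $O(\tw\cdot\log n)$. First I would recall precisely why this holds: if $G$ has treewidth $\tw$, then it admits a tree decomposition of width $\tw$, and such a decomposition can be balanced so that the underlying tree has height $O(\log n)$ while each bag still has size at most $\tw+1$. From a balanced tree decomposition one builds a search tree on $G$ by processing the bags along the decomposition tree from the root downward: eliminating the (at most $\tw+1$) vertices of the root bag splits $G$ into pieces that correspond to the subtrees hanging below, and one recurses. Each level of the decomposition tree contributes at most $\tw+1$ to the height of the resulting search tree, so the search tree has height $O(\tw\cdot\log n)$. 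Hence $\td(G) = O(\tw\cdot\log n)$.

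Once this is in hand, the corollary is immediate from Theorem~\ref{thm:tdub}: that theorem gives $\delta(\mathcal{A}(G)) \leq 2\cdot\td(G)\cdot n$, and substituting $\td(G) = O(\tw\cdot\log n)$ yields $\delta(\mathcal{A}(G)) \leq c\cdot\tw\cdot n\log n$ for a suitable absolute constant $c$ absorbing the constants from the balancing argument and the factor $2$. The monotonicity of $\delta(\mathcal{A}(\cdot))$ (Theorem~\ref{thm:monot}) is not even needed here, since Theorem~\ref{thm:tdub} already applies directly to $G$.

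The only real work is the bound $\td \leq O(\tw\cdot\log n)$, and even that is classical (it appears, e.g., in Ne{\v s}et{\v r}il and Ossona de Mendez~\cite{NO12}); I would either cite it or include the short balancing argument sketched above. The one point to be careful about is the balancing step: given an arbitrary tree decomposition of width $\tw$, one wants a decomposition (of the same or comparably small width) whose tree has logarithmic depth — this is done by repeatedly choosing a centroid bag, making it the root, and recursing on the components of $G$ obtained after deleting that bag's vertices, adding the centroid bag to every child's root bag. This at most doubles the width (so it becomes $2\tw+1$, still $O(\tw)$) and produces depth $O(\log n)$, which is all that is needed. This is the step I expect to state carefully but it presents no genuine obstacle.
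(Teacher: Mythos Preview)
Your proposal is correct and matches the paper's own proof exactly: invoke Theorem~\ref{thm:tdub} together with the classical bound $\td(G)=O(\tw(G)\cdot\log n)$ (citing~\cite{NO12}), and absorb the constants. The extra detail you give on how to obtain the tree-depth bound via balancing a tree decomposition is fine but unnecessary here, since the paper simply cites the result.
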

\begin{proof}
This follows from Theorem~\ref{thm:tdub} and the known fact that $\td (G)=O(\tw(G) \cdot\log n)$~\cite{NO12}.
\end{proof}

\section{Diameter and pathwidth}
\label{sec:pw}

We first prove a lower bound on the diameter of graph associahedra in terms of pathwidth.
\begin{theorem}
  \label{thm:pwcliques}
  For any $k\geq 2$ and any $n$ multiple of $k-1$, there exists an interval graph on $n+1$ vertices and of pathwidth $k-1$ such that $\delta (\mathcal{A} (G))$ is at least $nk/2$.
\end{theorem}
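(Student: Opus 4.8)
The plan is to reuse the construction from the proof of Theorem~\ref{thm:tdlb} and simply re-examine it from the point of view of pathwidth rather than tree-depth. Concretely, I would take $G$ to be the graph obtained from $k$ cliques $C_1,\dots,C_k$, each of size $k$, glued together along a single common vertex $v$; here I set the clique size to $k$ so that the total number of non-apex vertices is exactly $k(k-1)$, but for the general statement I would instead let each $C_i$ have size $n/(k-1)+1$ so that removing $v$ leaves $k$ cliques with $n/(k-1)$ vertices apiece, giving $n+1$ vertices overall. The first step is to verify that this graph is an interval graph: each $C_i$ can be represented by a bundle of intervals all containing a fixed point, and the shared vertex $v$ is represented by a single long interval that meets every bundle, so $G$ is an interval graph. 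The second step is to pin down the pathwidth. A natural path decomposition processes the cliques one at a time: bag $i$ consists of $C_i$ together with the vertex $v$, but since $v \in C_i$ already this bag has size $|C_i| = n/(k-1)+1$, so the width is $n/(k-1)$. Hmm — this does not match $k-1$, so I would instead take each $C_i$ to have size exactly $k$ (so $|C_i|-1 = k-1$), set $n = k(k-1)$, and obtain pathwidth $k-1$ on $n+1 = k(k-1)+1$ vertices, which is the clean statement I would record.

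The third and main step is the diameter lower bound. By Theorem~\ref{thm:mlb} we have $\delta(\mathcal{A}(G)) \geq \max\{m,\,2(n+1)-20\}$, so it suffices to count the edges of $G$. The edge count is
$$
m = k\binom{k}{2} = k\cdot\frac{k(k-1)}{2} = \frac{k}{2}\cdot k(k-1) = \frac{nk}{2},
$$
using $n = k(k-1)$. Hence $\delta(\mathcal{A}(G)) \geq m = nk/2$, which is exactly the claimed bound. So in fact no new argument beyond the Manneville–Pilaud edge lower bound is needed — the content of the theorem is entirely in choosing a graph that is simultaneously an interval graph, of pathwidth $k-1$, and dense enough (quadratically many edges in $k$) to make the edge bound large.

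The only real obstacle I anticipate is bookkeeping around the two competing parametrizations: the statement wants ``$n$ a multiple of $k-1$'' and ``$n+1$ vertices'' with pathwidth $k-1$, whereas the cleanest construction has each clique of size $k$. I would reconcile this by letting each of the $k$ cliques have $n/(k-1)+1$ vertices (so their union after identifying $v$ has $1 + k\cdot n/(k-1)$ vertices — which is not $n+1$ in general), so I expect the final writeup to instead fix clique size $n/(k-1)+1$ only when that yields the advertised counts, and otherwise state the theorem with the clique-size-$k$ instance; in either case the path decomposition argument and the edge count $k\binom{\cdot}{2}$ go through verbatim, and one checks that pathwidth equals clique size minus one because $G$ is a connected interval graph whose maximum clique is a $C_i$ and interval graphs have pathwidth equal to clique number minus one. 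I would also double-check that removing $v$ indeed disconnects $G$ into the $k$ cliques minus $v$, which is immediate from the construction, so that the tree-depth / pathwidth computations are not affected by hidden edges between distinct $C_i$'s.
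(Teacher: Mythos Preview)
Your approach is essentially the paper's: build an interval graph of pathwidth $k-1$ with enough edges, then invoke the edge lower bound from Theorem~\ref{thm:mlb}. The paper, however, uses a different graph: a \emph{path} of $n/(k-1)$ cliques of size $k$, where only consecutive cliques share a single vertex, rather than your \emph{star} of cliques all meeting at one apex. Both are interval graphs of clique number $k$ (hence pathwidth $k-1$), both have exactly $\frac{n}{k-1}\binom{k}{2}=\frac{nk}{2}$ edges, and both give $n+1$ vertices, so either construction works.

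Your bookkeeping difficulty is self-inflicted. You fix the \emph{number} of cliques at $k$ and then try to vary the clique \emph{size}, which of course changes the pathwidth away from $k-1$. The correct move, paralleling the paper, is the opposite: fix each clique to have size $k$ (so pathwidth is $k-1$) and take $t=n/(k-1)$ of them. With $t$ cliques of size $k$ glued at a single vertex $v$, the vertex count is $1+t(k-1)=n+1$ and the edge count is $t\binom{k}{2}=nk/2$, exactly as required for every $n$ divisible by $k-1$. Your interval representation and your pathwidth computation go through unchanged for arbitrary $t$, so there was never any obstacle to the general statement.
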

\begin{proof}
  We consider a graph $G$ induced by a sequence of cliques $C_1$, $C_2$, \ldots, $C_{n/(k-1)}$, each of size $k$, such that any two consecutive cliques $C_i$ and $C_{i+1}$ have a single vertex in common, and no other pairs of cliques have a vertex in common. This graph is clearly an interval graph of pathwidth $k-1$, and its number of edges is
  $$
  {k\choose 2}\frac{n}{k-1}\geq \frac{nk}{2}.
  $$
  The conclusion follows from the lower bound in Theorem~\ref{thm:mlb}.
\end{proof}

Theorem~\ref{thm:pwcliques} and Corollary~\ref{cor:twub} together prove Theorems~\ref{thm:twb} and~\ref{thm:pwb}.

Paths have pathwidth one, and their graph associahedra have linear diameter. 
Interestingly, as we shall see, the diameter jumps to $\Omega (n\log n)$ for graphs of pathwidth two.
Our proof uses a construction similar to the one from Cardinal, Langerman, and P{\'{e}}rez{-}Lantero~\cite{CLP18} for tree associahedra.
We need some preliminaries involving chordal graphs and {\em projections} of rotation sequences.

\subsection{Chordal graphs}

A graph $G$ is {\em chordal} if it does not contain induced cycles of length $4$ or more. In other words, every cycle in $G$ of length $4$ or more has a {\em chord}.
We denote by $N(v)$ the set of the neighbors of a vertex $v$ in $G$.
A vertex $v$ is said to be {\em simplicial} if $G[N(v)]$ is a clique. 
It is known that a graph is chordal if and only if it has a {\em perfect elimination ordering}: an ordering of its vertices such that the set of neighbors of a vertex
$v$ that come after $v$ in the ordering induce a clique. Hence, a perfect elimination ordering is obtained by iteratively removing a simplicial vertex in 
the remaining subgraph. The set of the vertices remaining after removing the vertices in a prefix of a perfect elimination ordering is called {\em monophonically convex}, or {\em m-convex};
see Farber and Jamison~\cite{FJ86} and references therein for details. In what follows, we will simply call such sets of vertices {\em convex}.

\subsection{Projections}

We now introduce a tool that will turn out useful for performing inductions on chordal graphs.

\begin{observation}
Consider a chordal graph $G$ on at least two vertices, a simplicial vertex $v$ of $G$, and a search tree $T$ on $G$. Then $v$ has at most one child in $T$. Further consider the tree $T'$ obtained as follows.
  \begin{enumerate}
  \item If $v$ is a leaf of $T$, just remove $v$ from the tree.
  \item If $v$ is the root of $T$, then remove $v$ from $T$ and designate its child as the new root.
    \item If $v$ has both a parent and a child, then remove $v$ from $T$ and replace the two edges between $v$ and
  its parent and child by a single edge between its parent and its child.
    \end{enumerate}
Then $T'$ is a search tree on $G-v$.
\end{observation}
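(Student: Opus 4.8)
The plan is to verify the three cases of the construction one at a time, checking in each case that the resulting tree $T'$ satisfies the recursive definition of a search tree on $G-v$. First, I would establish the preliminary claim that a simplicial vertex $v$ has at most one child in $T$. Indeed, the children of $v$ in $T$ are the roots of search trees on the connected components of $G[A]-v$, where $A$ is the vertex set of the subtree rooted at $v$. If $v$ had two children, then $G[A]-v$ would be disconnected, so $N(v)\cap A$ would be split among at least two components; but $N(v)$ induces a clique (as $v$ is simplicial), hence so does $N(v)\cap A$, which must therefore lie in a single component — a contradiction unless $N(v)\cap A=\emptyset$, in which case $A=\{v\}$ and $v$ is a leaf. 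So $v$ has zero or one child.

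Next I would handle the cases. Case 1 ($v$ a leaf) is immediate: removing a leaf from a search tree on $G$ and leaving everything else untouched clearly yields a search tree on $G-v$, since the recursive structure away from $v$ is unaffected and $v$ was a singleton component at the bottom. Case 2 ($v$ the root): here $v$ has a unique child $c$, and $G-v$ is connected (it is exactly the one component of $G-v$ rooted at $c$, by the search-tree structure), so promoting $c$ to the root of the tree $T-v$ gives precisely a search tree on $G-v$. The key observation is that the subtree rooted at $c$ in $T$ was already a search tree on that component of $G-v$, and since there is only one component, it is all of $G-v$. Case 3 ($v$ has parent $p$ and unique child $c$): I would argue by descending into the recursion to the node $v$. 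At the point in the recursion where $v$ is chosen as a root — say $v$ is chosen as the root of the search tree for the subgraph $G[A]$ with $A$ the subtree of $v$ — we have that $A\setminus\{v\}$ is connected (a single component of $G[A]-v$, namely the one containing the child $c$), and the subtree at $c$ is a search tree on it. After deletion, the subtree at $v$'s former location becomes the subtree rooted at $c$, attached to $p$; this is a valid search tree for $G[A']$ where $A'=A\setminus\{v\}$, and one checks that $A'$ is precisely one of the connected components of $(G-v)[B]-p'$ at the appropriate higher level of the recursion, where $B$ and $p'$ are inherited unchanged. Everything outside the subtree rooted at (the old) $v$ is literally unchanged, and the graph seen at each such node changes only by the removal of $v$ from the relevant component, which affects only which component $A'$ one is looking at, not the validity of the decomposition.

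The main obstacle, I expect, is the bookkeeping in Case 3: one must carefully track how the connected components of $G[X]-u$ at each ancestor node $u$ (with $X$ the relevant subtree vertex set) transform when $v$ is deleted, and confirm that the component containing $c$ in $G[A]-v$ merges correctly with the component containing $a$ — wait, more precisely, that the single component $A\setminus\{v\}$ slots in exactly where the component containing $v$ used to sit, one level up. The cleanest way to organize this is to phrase the whole argument as: the map "delete $v$ and splice" commutes with the recursive unfolding of the search tree, so it suffices to check the single recursive step at $v$'s parent, which reduces to the observation that the connected components of $G[X]-p$ and of $(G-v)[X\setminus\{v\}]-p$ are in the obvious bijection (the component of $v$ in the former corresponds to $A\setminus\{v\}$ in the latter, all others unchanged), using once more that $v$ is simplicial so that deleting it cannot disconnect the one component it lives in together with its — here we only need that $A\setminus\{v\}$ itself is connected, which we already know from the search-tree structure at $v$.
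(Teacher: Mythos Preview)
The paper states this as an \emph{Observation} and gives no proof; your proposal supplies exactly the details the authors omit, and the argument is correct in substance. Your preliminary claim (simpliciality forces at most one child) is clean and complete. Cases~1 and~2 are handled correctly.

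For Case~3 your exposition is slightly muddled but the ideas are all present. The cleanest phrasing is the induction you gesture at: recurse from the root of $T$, and at each ancestor $u$ of $v$ with subtree vertex set $X_u$, observe that the connected components of $(G-v)[X_u\setminus\{v\}]-u$ are exactly those of $G[X_u]-u$, with the one containing $v$ replaced by itself minus $v$. The point that needs checking at \emph{every} ancestor level (not only at $v$'s parent) is that this component stays connected after deleting $v$; this follows from simpliciality, since any path through $v$ inside a tube can be shortcut via the edge between the two neighbours of $v$ on that path. You mention this (``deleting it cannot disconnect the one component it lives in''), but then immediately say ``here we only need that $A\setminus\{v\}$ itself is connected, which we already know from the search-tree structure at $v$'' --- that remark applies only at the parent level, and for the higher levels you genuinely need the simpliciality argument. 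So the ingredient is in your write-up, just not deployed at quite the right generality.
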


Given an initial search tree on $G$, we can construct a search tree on $G[S]$ by induction, for any convex subset $S\subseteq V$. Note that the obtained search tree does not depend on the order in which the simplicial vertices have been removed. Letting $T_{|S}$ be the tree thus obtained, we call it the {\em projection} of $T$ on $S$.

\begin{observation}
Let $T$ be a search tree on $G=(V,E)$, and let $T'$ be obtained from $T$ by performing a single rotation.
Then the projections of $T$ and $T'$ on $V\mathord{\setminus} \{v\}$ are either identical or related by a single rotation.
They are identical if and only if the rotation between $T$ and $T'$ involves $v$.
\end{observation}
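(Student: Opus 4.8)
The plan is to argue on the level of tubings, where a rotation is a \emph{flip}, that is, a pair of maximal tubings whose symmetric difference has size two. The first step is a lemma on projections: if $v$ is a simplicial vertex of $G$ and $\mathcal{S}$ is the maximal tubing corresponding to a search tree $U$ on $G$, then the maximal tubing corresponding to $U_{|V\setminus\{v\}}$ is obtained from $\mathcal{S}$ by intersecting every tube with $V\setminus\{v\}$ and keeping the distinct nonempty sets so produced. This follows by induction from the preceding observation: when $v$ is removed, the subtree of $U$ rooted at any vertex $u\neq v$ just loses $v$ if it contained it, so in $U_{|V\setminus\{v\}}$ the subtree rooted at $u$ is exactly its subtree in $U$ intersected with $V\setminus\{v\}$; and the subtree of $v$ minus $v$ coincides with the subtree of the unique child of $v$, if any (recall that $v$ has at most one child), hence contributes no new set.

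Now write the flip between the tubings $\mathcal{S}$ and $\mathcal{S}'$ of $T$ and $T'$ as the exchange of a tube $B$ for a tube $B'$, where $B$ is the vertex set of the subtree of $T$ rooted at the child $b$ of the vertex $a$, and $B'$ is the vertex set of the subtree of $T'$ rooted at $a$; both are proper subsets of the vertex set $A$ of the subtree rooted at $a$ in $T$ (equivalently at $b$ in $T'$), and all other tubes of $\mathcal{S}$ and $\mathcal{S}'$ coincide. Writing $S=V\setminus\{v\}$, the lemma shows that the tubing of $T_{|S}$ and the tubing of $T'_{|S}$ agree on the projections of the common tubes, and can differ only by whether $B\cap S$ and $B'\cap S$ appear as genuinely new sets. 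Since two distinct maximal tubings differ in at least two tubes, the two projected tubings are therefore either identical or differ in exactly the two (then necessarily distinct) tubes $B\cap S$ and $B'\cap S$, in which case they are a flip---hence a single rotation---apart. It remains to see which case occurs.

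If $v\notin\{a,b\}$, then $B\cap S$ contains $b$, is a tube of $T_{|S}$ (being the subtree of $b$ there), and is a proper subset of $A\cap S$, which also contains $a$; since every tube of $\mathcal{S}'$ whose vertex set contains $b$ contains all of $A$, no tube of $T'_{|S}$ can equal $B\cap S$ (it would contain $A\cap S$, which strictly contains $B\cap S$), so the two projected tubings differ and we are in the second case. If $v=b$, then deleting the simplicial vertex $b$ from the connected graph $G[A]$ leaves it connected, so $B'=A\setminus\{b\}$ and $B'\cap S$ is the projection of the common tube $A$; moreover $B\cap S$ is either empty or the subtree of the unique child of $b$, again a common tube; hence both projected tubings reduce to the projections of the common tubes and are identical. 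The case $v=a$ is handled symmetrically: $b$ is then the unique child of $a$, so $A=\{a\}\cup B$ and $B\cap S$ is the projection of the common tube $A$, while $N(a)$ being a clique forces at most one connected component of $G[A\setminus\{b\}]$ to meet $N(a)$, so $B'\cap S$ is empty or one such component, which is a common tube; once more the two projected tubings coincide. The main obstacle is precisely this last bookkeeping---checking that $B\cap S$ and $B'\cap S$ do not survive as new tubes when $v\in\{a,b\}$---and it is exactly where simpliciality enters, through the two facts that removing a simplicial vertex preserves connectivity and that the neighborhood of a simplicial vertex is a clique. (One could avoid tubings altogether and instead verify directly that deleting a simplicial vertex $v\neq a,b$ commutes with the rotation at $(a,b)$ because the two operations touch essentially disjoint parts of the tree, with the same two facts disposing of $v\in\{a,b\}$; the tubing formulation merely keeps the case analysis shortest.)
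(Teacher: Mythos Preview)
The paper states this as an observation and gives no proof, so there is nothing to compare against; your argument via tubings is correct and supplies the omitted details. The projection lemma you establish first---that the maximal tubing of $U_{|V\setminus\{v\}}$ is $\{C\cap(V\setminus\{v\}):C\in\mathcal S,\ C\cap(V\setminus\{v\})\neq\emptyset\}$---is the right reduction, and the three-case analysis is sound.

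One small slip in the case $v=a$: you write ``components of $G[A\setminus\{b\}]$'' where you mean ``components of $G[A\setminus\{a,b\}]=G[B\setminus\{b\}]$'', i.e.\ the subtrees rooted at the children of $b$ in $T$, which are indeed common tubes. With that correction your claim goes through: since $N(a)\setminus\{b\}$ is a clique, all neighbours of $a$ in $A\setminus\{a,b\}$ lie in a single component of $G[B\setminus\{b\}]$, so $B'\setminus\{a\}$ is either empty or exactly that component, hence a common tube. The rest of the argument (that the symmetric difference of the two projected tubings is contained in $\{B\cap S,B'\cap S\}$ and therefore has size~$0$ or~$2$) is clean.
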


The following lemma is a consequence of the above observations and generalizes Lemma~3 in~\cite{CLP18}.
\begin{lemma}[Projection Lemma]
  \label{lem:projection}
Let $G=(V,E)$ be a chordal graph, and $\pi$ a sequence of rotations transforming a search tree $T$ on $G$ into another one, say $T'$.
Let $S$ be a convex subset of $V$.
The {\em projection} $\pi_{|S}$ of $\pi$ on $S$ is the sequence of rotations obtained by removing from $\pi$ all rotations involving two
vertices at least one of whose does not belong to $S$.
Then $\pi_{|S}$ is a rotation sequence that transforms $T_{|S}$ into $T'_{|S}$.
\end{lemma}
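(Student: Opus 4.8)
The plan is to prove the lemma by induction on $|V\setminus S|$, reducing at each step to the removal of a single simplicial vertex, which is precisely the situation governed by the two Observations preceding the lemma. If $S=V$ there is nothing to prove, since $T_{|S}=T$, $T'_{|S}=T'$, and no rotation is removed from $\pi$. Otherwise, fix a perfect elimination ordering $v_1,\dots,v_n$ of $G$ with $S=V\setminus\{v_1,\dots,v_j\}$ for some $j\ge 1$; then $v:=v_1$ is simplicial in $G$, the graph $G-v$ is again chordal (and connected, a simplicial vertex never being a cut vertex), the sequence $v_2,\dots,v_n$ is a perfect elimination ordering of $G-v$, and $S$ is still a convex subset of $G-v$. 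Write $S_1:=V\setminus\{v\}$.

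First I would handle the one-vertex projection onto $S_1$. Write $\pi=(\rho_1,\dots,\rho_\ell)$ and let $T=T_0,T_1,\dots,T_\ell=T'$ be the intermediate search trees, $T_i$ being obtained from $T_{i-1}$ by the rotation $\rho_i$. Applying the second Observation to each consecutive pair $(T_{i-1},T_i)$ with the vertex $v$, the projections $(T_{i-1})_{|S_1}$ and $(T_i)_{|S_1}$ coincide exactly when $\rho_i$ involves $v$, and otherwise differ by a single rotation, which moreover is $\rho_i$ itself. Hence the walk $(T_0)_{|S_1},\dots,(T_\ell)_{|S_1}$ becomes, after deleting the repeated entries (those caused by rotations involving $v$), a genuine rotation sequence from $T_{|S_1}$ to $T'_{|S_1}$ that uses precisely the rotations of $\pi$ not involving $v$; in other words, $\pi_{|S_1}$ transforms $T_{|S_1}$ into $T'_{|S_1}$.

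Now I would close the induction. Apply the induction hypothesis to the chordal graph $G-v$, to the rotation sequence $\pi_{|S_1}$ from $T_{|S_1}$ to $T'_{|S_1}$, and to the convex subset $S$, noting that $|S_1\setminus S|=|V\setminus S|-1$: it follows that $(\pi_{|S_1})_{|S}$ transforms $(T_{|S_1})_{|S}$ into $(T'_{|S_1})_{|S}$. Since a projection does not depend on the order in which simplicial vertices are removed (the remark following the first Observation), $(T_{|S_1})_{|S}=T_{|S}$ and $(T'_{|S_1})_{|S}=T'_{|S}$; and removing from $\pi$ the rotations involving $v$ and then those involving a vertex of $S_1\setminus S$ removes exactly the rotations with an endpoint outside $S$, so $(\pi_{|S_1})_{|S}=\pi_{|S}$. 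This gives the claim.

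The main obstacle is the precise fact invoked in the second paragraph: when $\rho_i$ does not involve $v$, the single rotation relating the two projections is $\rho_i$ itself, acting in the same way on the corresponding subtrees. Establishing this requires unwinding the definition of the projection — deletion of $v$ together with either dropping an edge (cases 1 and 2 of the first Observation) or contracting a path of length two through $v$ (case 3) — and checking that the subtree of the parent in $\rho_i$, the component into which it splits, and the reattachment of the children of the rotated vertex are all untouched by the removal of the simplicial vertex $v$. Everything else is bookkeeping.
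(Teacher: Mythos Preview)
Your proof is correct and follows exactly the route the paper indicates: the paper does not give a detailed argument but simply states that the lemma ``is a consequence of the above observations,'' and your induction on $|V\setminus S|$ is the natural way to make this precise, peeling off one simplicial vertex at a time via the second Observation and composing. The only point you flag as an obstacle---that the single rotation relating $(T_{i-1})_{|S_1}$ and $(T_i)_{|S_1}$ is $\rho_i$ itself---is indeed routine bookkeeping once one notes that removing the simplicial leaf/through-vertex $v$ does not affect the ancestor relation among the remaining vertices.
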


\subsection{An $\Omega (n\log n)$ lower bound for graphs of pathwidth two}

Let us first recall how {\em bit-reversal} permutations work~\cite{K96,W89,DHIKP09}.
We denote a permutation on $n$ elements by a sequence composed of one occurrence of each of the first $n$ positive integers.
The bit-reversal permutation of length one is $\sigma_1 = 1$.
The $k$th bit-reversal permutation $\sigma_k$ has length $n=2^{k-1}$ and is obtained by concatenating $2\sigma_{k-1}-1$ with $2\sigma_{k-1}$.
Note that the permutation $\sigma_k$ of length $n$ alternates between entries at most $n/2$ and entries greater than $n/2$.
Here are the first five bit-reversal permutations:

\begin{eqnarray*}
\sigma_1 & = & 1 \\
\sigma_2 & = & 1, 2\\ 
\sigma_3 & = & 1, 3, 2, 4\\
\sigma_4 & = & 1, 5, 3, 7, 2, 6, 4, 8\\ 
\sigma_5 & = & 1, 9, 5, 13, 3, 11, 7, 15, 2, 10, 6, 14, 4, 12, 8, 16.
\end{eqnarray*}

\begin{theorem}
\label{thm:pw2lb}
Let $n$ be a power of two. If $n$ is at least $4$, then there exists a graph $G$ on $n$ vertices, with pathwidth two such that $\delta (\mathcal{A} (G))$ is at least $\Omega (n\log n)$.
\end{theorem}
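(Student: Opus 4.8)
The plan is to reproduce, for pathwidth two, the strategy of Cardinal, Langerman and P\'erez-Lantero that yields an $\Omega(n\log n)$ lower bound for tree associahedra, with the Projection Lemma (Lemma~\ref{lem:projection}) driving an induction whose recursion copies the structure of the bit-reversal permutation $\sigma_k$, of length $2^{k-1}$. Concretely, I would build, for each $k$, a connected chordal graph $G_k$ on $n=\Theta(2^k)$ vertices together with two search trees $T_k^{-}$ and $T_k^{+}$ on it, with the following features. First, $G_k$ is a path $R_k$ (the \emph{spine}) together with extra vertices, each forming a triangle with an edge of $R_k$; this makes $G_k$ chordal (deleting the extra, simplicial, vertices leaves a path) and of pathwidth exactly two (a path decomposition of width three is obtained by sliding a three-vertex window along $R_k$ and hanging a size-three bag off it for each extra vertex, while $G_k$ contains a triangle, so its treewidth, hence its pathwidth, is at least two). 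Second, the positions of the extra vertices and of the two trees along the spine are chosen according to $\sigma_k$, so that the identity $\sigma_k=(2\sigma_{k-1}-1)\cdot(2\sigma_{k-1})$ is reflected by two vertex-disjoint convex subsets $S^{-},S^{+}\subseteq V(G_k)$, each inducing a copy of $G_{k-1}$, on which the projections of $T_k^{-}$ and $T_k^{+}$ are, respectively, $T_{k-1}^{-}$ and $T_{k-1}^{+}$. Third, there is one more convex subset $S^{0}$, disjoint from $S^{-}\cup S^{+}$ and of size $\Theta(2^k)$, carrying the interleaving of the two halves, such that the projections of $T_k^{-}$ and $T_k^{+}$ onto $S^{0}$ are two search trees of a path at distance $\Omega(2^k)$ in the classical associahedron of that path. (One pads $R_k$ if necessary so that the vertex count is exactly a prescribed power of two.)

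Given such a construction, the bound follows from a one-line recursion. Let $f(k)$ be the minimum number of rotations taking $T_k^{-}$ to $T_k^{+}$, so that $\delta(\mathcal A(G_k))\ge f(k)$, and fix an optimal such rotation sequence $\pi$. As $S^{-},S^{+},S^{0}$ are pairwise disjoint, no rotation of $\pi$ is counted in two of the projections $\pi_{|S^{-}},\pi_{|S^{+}},\pi_{|S^{0}}$, hence $|\pi|\ge|\pi_{|S^{-}}|+|\pi_{|S^{+}}|+|\pi_{|S^{0}}|$. By Lemma~\ref{lem:projection} each of these projections is a valid rotation sequence between the indicated pair of projected trees, so $|\pi_{|S^{-}}|\ge f(k-1)$, $|\pi_{|S^{+}}|\ge f(k-1)$ and $|\pi_{|S^{0}}|\ge c\,2^k$ for some constant $c>0$. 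Therefore $f(k)\ge 2f(k-1)+c\,2^k$, which together with the base case $f(2)=\Theta(1)$ gives $f(k)=\Omega(k\,2^k)=\Omega(n\log n)$.

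The step I expect to be the main obstacle is arranging the construction so that projection behaves self-similarly. The projection operation of Lemma~\ref{lem:projection} is rigid --- it repeatedly deletes simplicial vertices, in any order and with no freedom --- so the two copies of $G_{k-1}$ inside $G_k$, and the designated trees on them, must be \emph{exactly} what iterated simplicial deletion produces, not merely isomorphic to it; making the spine layout, the attachment of the triangle vertices, and the ancestor--descendant structure of $T_k^{\pm}$ cooperate under projection onto $S^{-}$, $S^{+}$ and $S^{0}$ at once is where the recursive shape of $\sigma_k$ must really be used. The other ingredients should be routine: the $\Omega(2^k)$ distance between the two projected trees of a path can be certified either via the known exact diameter of the classical associahedron or, more cheaply, by bounding how much one rotation can change a suitable statistic of a search tree; and checking the convexity of $S^{-},S^{+},S^{0}$, the chordality of $G_k$, and that its pathwidth equals two is bookkeeping along the lines sketched above.
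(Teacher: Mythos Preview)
Your high-level plan --- build a pathwidth-two chordal graph, use the Projection Lemma to set up a recursion mirroring the bit-reversal permutation $\sigma_k$, and obtain $f(k)\ge 2f(k-1)+\Omega(2^k)$ --- is exactly the paper's strategy. Where you diverge is in how the linear term is obtained. You propose a \emph{third} disjoint convex set $S^0$ whose projection lands in a classical (path) associahedron at distance $\Omega(2^k)$; the paper instead uses only a two-set partition $L\cup R$ and extracts the linear term from a colouring argument. Concretely, the paper's graph is two parallel paths $a_1,\dots,a_n$ and $b_1,\dots,b_n$ joined by the edges $a_ib_i$ and $a_ib_{i+1}$; the target tree $T'$ is the path $a_{\sigma_k(1)},a_{\sigma_k(2)},\dots,a_{\sigma_k(n)},b_1,\dots,b_n$, whose \emph{alternation number} (the maximum number of $L$/$R$ colour changes along a root-to-leaf path) equals $n+1$, while that of the starting tree $T$ is $1$. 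A monochromatic rotation never increases this statistic and a bichromatic one increases it by at most $2$, so at least $n/2$ bichromatic rotations are forced; these are exactly the rotations discarded by both $\pi_{|L}$ and $\pi_{|R}$, which closes the recursion $\ell(n)\ge 2\ell(n/2)+n/2$.

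The paper's route dissolves your anticipated obstacle outright: there is no need to engineer a third convex subset whose projection simultaneously behaves well, because the interleaving cost is measured by the alternation number rather than by a projection. Your three-projection scheme is not obviously wrong, but as stated it is underspecified at the crucial point. The interleaving in $\sigma_k$ is, by definition, a relation between vertices destined for $S^-$ and vertices destined for $S^+$; projection onto a set $S^0$ disjoint from both deletes all of those vertices and hence forgets their relative order, so it is not clear what ``$S^0$ carries the interleaving'' could mean. To make your variant go through you would need an explicit layout in which the $S^0$-projections of the two trees are forced (independently of anything happening in $S^\pm$) to be, say, a left comb and a right comb on a path of length $\Theta(2^k)$ --- a genuine design problem that the paper's alternation-number argument sidesteps entirely.
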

\begin{proof}
 Let $n=2^{k-1}$ for some $k$.
 We construct a graph $G_n$ of pathwidth two composed of $2n$ vertices denoted by $a_1$, $a_2$, \ldots, $a_n$ and $b_1$, $b_2$, \ldots, $b_n$.
 We let $a_i$ be adjacent to $a_{i+1}$ and $b_{i+1}$ when $i<n$. Similarly, $b_i$ is adjacent to $b_{i+1}$ when $i<n$ and $a_i$ to $b_i$ for all $i$. It can be checked that $G_n$ is an interval graph with clique number three, hence of pathwidth two. The graph is shown on Figure~\ref{fig:pw2}. We define the two subsets of vertices 
$$
L=\bigcup_{i\leq n/2}\{a_i,b_i\}
$$
and
$$
R=\bigcup_{i>n/2}\{a_i,b_i\}.
$$

Both of these subsets are convex. Further note that $G[L]$ and $G[R]$ are each isomorphic to $G_{n/2}$.

We now consider the rotation distance between two search trees $T$ and $T'$ on $G_n$. These trees are paths of the following form, rooted at the first vertex:
\begin{center}
\begin{tabular}{crl}
$T$ & {\color{darkblue}$a_1$}, {\color{darkblue}$b_1$}, {\color{darkblue}$a_2$}, {\color{darkblue}$b_2$}, {\color{darkblue}\ldots}, {\color{darkblue}$a_{n/2}$}, {\color{darkblue}$b_{n/2}$}, & {\color{darkred} $a_{n/2+1}$}, {\color{darkred}$b_{n/2+1}$}, {\color{darkred}\ldots}, {\color{darkred}$a_n$}, {\color{darkred}$b_n$} \\
$T'$ & {\color{darkblue}$a_{\sigma_k (1)}$}, {\color{darkred}$a_{\sigma_k (2)}$}, {\color{darkblue}$a_{\sigma_k (3)}$}, {\color{darkred}$a_{\sigma_k (4)}$}, \ldots, {\color{darkblue}$a_{\sigma_k (n-1)}$},  {\color{darkred} $a_{\sigma_k (n)}$}, & {\color{darkblue} $b_1$}, {\color{darkblue}$b_2$}, {\color{darkblue}\ldots}, {\color{darkblue}$b_{n/2}$}, {\color{darkred} $b_{n/2+1}$}, {\color{darkred}\ldots} ,{\color{darkred}$b_n$}
\end{tabular}
\end{center}

The first path, $T$, is indeed a search tree because it corresponds to a perfect elimination ordering. The second path, $T'$, is also a search tree, because the subgraphs corresponding to subtrees rooted at each of the $a_i$ are connected, thanks to the presence of vertices $b_i$. By the recursive definition of bit-reversal permutations, the search trees $T_{|L}$ and $T'_{|L}$ on $G[L]$ are obtained in the exact same way, by permuting the elements of $L$ according to $\sigma_{k-1}$. The same holds for $T_{|R}$ and $T'_{|R}$ on $G[R]$, up to a shift of the indices. 
\begin{figure}[t]
\begin{center}
\includegraphics[page=3,scale=.6]{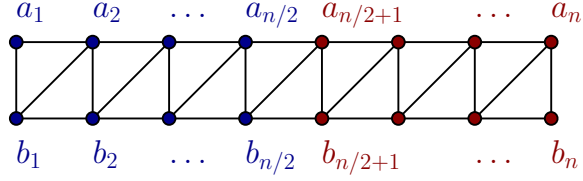}
\end{center}
\caption{\label{fig:pw2}The graph $G_n$ used in the proof of Theorem~\ref{thm:pw2lb}.}
\end{figure}
The sets $L$ and $R$ induce a two-coloring of any search tree on $G_n$. 
An edge of a search tree will be called {\em monochromatic} if both of its endpoints belong to the same set $L$ or $R$, and {\em bichromatic} otherwise. Similarly, we distinguish monochromatic rotations, involving pairs of vertices from the same set $L$ or $R$, from bichromatic rotations involving one vertex of each set.

Let $\pi$ be a sequence of rotations transforming $T$ into $T'$, of minimum length $\ell (n)$.
From our previous observations and Lemma~\ref{lem:projection}, the number of monochromatic rotations in $\pi$ is
$$
|\pi_{|L}| + |\pi_{|R}|\geq 2\ell\left(\frac{n}{2}\right)\!.
$$

We now give a lower bound on the number of bichromatic rotations in $\pi$.
Given a search tree on $G_n$, we define its {\em alternation number} as the maximum, over all paths from the root to a leaf, of the number of bichromatic edges on the path. 
Note that, from the property of bit-reversal permutations, the alternation number of $T'$ is $n+1$. On the other hand, the alternation number of $T$ is 1. We make two observations.

First, a monochromatic rotation cannot increase the alternation number of a tree. 
Indeed, consider a rotation involving vertices $a,b$, with $b$ the child of $a$ (refer to Figure~\ref{fig:rotation}). 
The only edges of the tree whose endpoints are changed by the rotation are the edge from the parent of $a$, 
and edges connecting $b$ to the root of a subtree in the initial tree. 
If $a$ and $b$ have the same color, none of these edges can become bichromatic.

Second, a bichromatic rotation can only increase the alternation number of a search tree by two.
Indeed again, on a path from the root to a leaf, only two edges can become bichromatic by a rotation involving $a$ and $b$: the one from the parent of $a$ and one from $b$ to the root of a subtree.

We conclude that there must be at least $n/2$ bichromatic rotations in $\pi$.
Summing the number of monochromatic and bichromatic rotations, we obtain
$\ell (n) \geq 2\ell (n/2) + n/2  = \Omega (n\log n).$
\end{proof}

\section{Associahedra of complete split graphs}
\label{sec:split}

Let $p$ and $q$ be two positive integers. In this section, we provide the exact diameter of $\mathcal{A}(\mathrm{SPK}_{p,q})$, where $\mathrm{SPK}_{p,q}$ is the complete split graph composed of two disjoint subsets of vertices, a subset $P$ of size $p$ inducing a clique and a subset $Q$ inducing an independent set of size $q$, and having all the edges between these two sets. We let
$$
n=p+q
$$
be the number of vertices of $\mathrm{SPK}_{p,q}$ and
$$
m=pq+{p\choose 2}
$$
be its number of edges. Scheffler~\cite{S93} showed that the tree-depth of $\mathrm{SPK}_{p,q}$ is equal to $p+1$. It is particularly noteworthy that associahedra of complete split graphs interpolate between the stellohedron (when $p=1$) whose diameter is linear and the permutohedron (when $q=1$) whose diameter is quadratic. Hence, by Theorem \ref{thm:mlb}, this is a consistent family of graph associahedra whose diameters range from one extreme behavior to the other.

Let us first observe that search trees on the graph $\mathrm{SPK}_{p,q}$ all look like \textit{brooms}: a chain of vertices, including all vertices
from the clique $P$, attached to a star whose center is one of the vertices of $P$. We refer to the vertices in the initial chain, including the
last vertex forming the center of the star, as the \textit{handle} of the broom. In what follows, we refer to search trees on complete split graphs as brooms.

Figure \ref{fig3star} shows the complete split graph $\mathrm{SPK}_{3,q}$ for some $q \geq 4$, and four brooms on $\mathrm{SPK}_{3,q}$.

\begin{figure}[htp]
\begin{center}
\includegraphics[scale=.6,page=1]{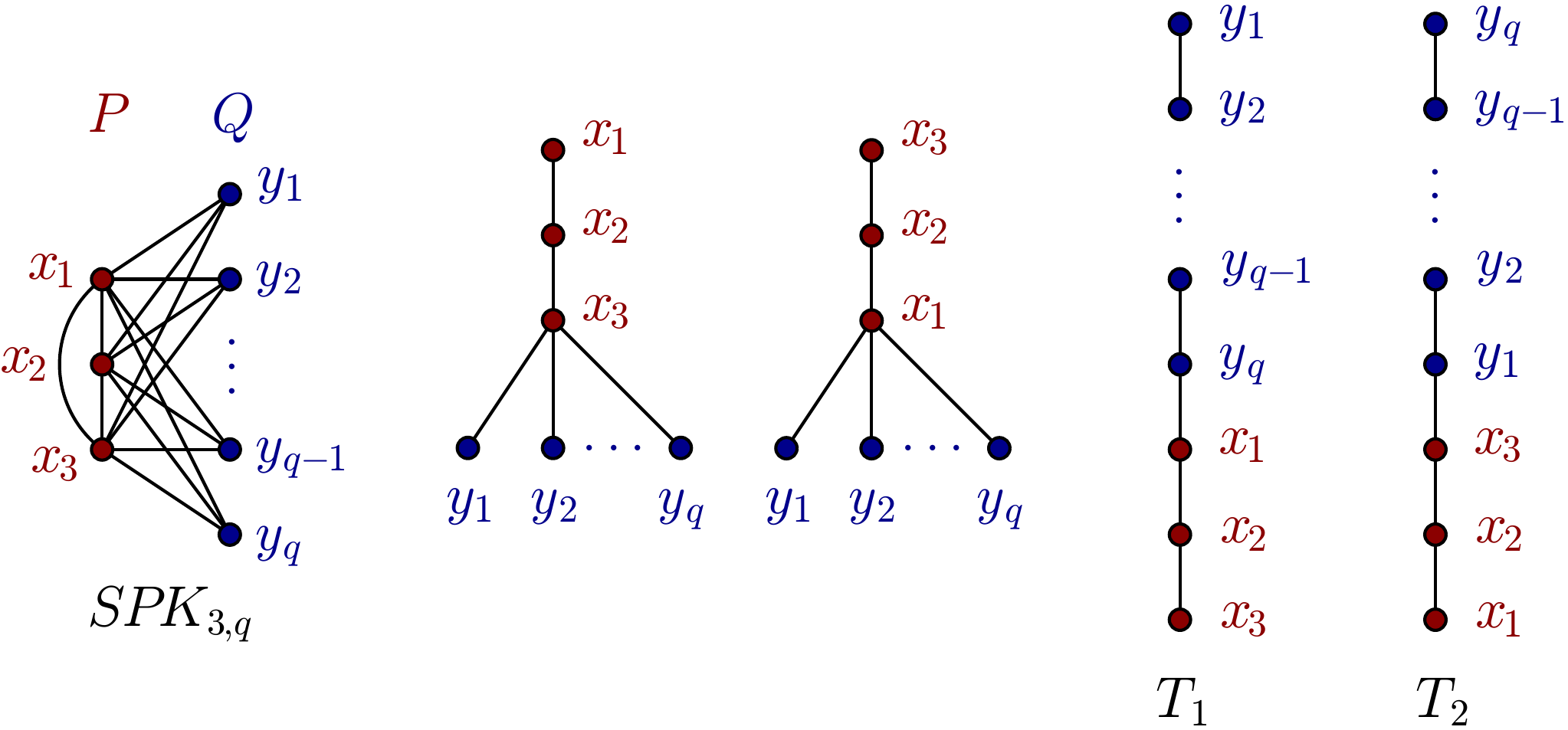}
\end{center}
\caption{The complete split graph $\mathrm{SPK}_{3,q}$ for some $q \geq 4$ and four brooms on $\mathrm{SPK}_{3,q}$.}
\label{fig3star}
\end{figure}

\subsection{Lower bound}

Denote by $x_1$ to $x_p$ the vertices in $P$ and by $y_1$ to $y_q$ the vertices in $Q$. Further consider two non-negative integers $\alpha$ and $\beta$ such that $\alpha<p$ and $\beta<q$.
In order to prove a lower bound for $\delta(\mathcal{A}(\mathrm{SPK}_{p,q}))$, we compute a lower bound for the distance of the following two brooms: $T_1$ having vertices in the order
$$
{\color{darkred} x_1}, {\color{darkred}x_2}, {\color{darkred}\ldots}, {\color{darkred}x_\alpha}, {\color{darkblue}y_1}, {\color{darkblue}y_2}, {\color{darkblue}\ldots}, {\color{darkblue}y_q}, {\color{darkred}x_{\alpha+1}}, {\color{darkred}\ldots}, {\color{darkred}x_p}
$$
and $T_2$ having vertices in the order
$$
{\color{darkblue}y_q},{\color{darkblue}y_{q-1}},{\color{darkblue}\ldots,y_{\beta+1}},{\color{darkred}x_p},{\color{darkblue}y_\beta},{\color{darkblue}\ldots}, {\color{darkblue}y_1}, {\color{darkred}x_{p-1}},{\color{darkred}\ldots},{\color{darkred}x_1}.
$$

Note that, when $\alpha=0$, the root of $T_1$ is $y_1$ and the child of $y_q$ in $T_1$ is $x_1$. Similarly, when $\beta=0$, $x_p$ is the child of $y_1$ in $T_2$ as illustrated in Figure~\ref{fig3star} when $p=3$. When $\beta$ is positive, $x_p$ is the only vertex of $P$ above a vertex of $Q$ in $T_2$. From now on, $\mathrm{dist}(T_1,T_2)$ denotes the rotation distance between $T_1$ and $T_2$. The quantity
$$
f(k)={p \choose 2}+{q\choose 2}-{k\choose 2}+\alpha{q}+k(2p-2\alpha-1)+|\beta-k|
$$
will appear in our estimation of $\mathrm{dist}(T_1,T_2)$. Let us first prove the following technical statement.
\begin{proposition}\label{p1cb}
If $0\leq{k}\leq{q}$, then $f(k)\geq\min\{f(0),f(q)\}$.
\end{proposition}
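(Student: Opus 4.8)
The plan is to regard $f$ as a function of the integer variable $k\in\{0,1,\ldots,q\}$ and to show that its minimum over this range is attained at $k=0$ or at $k=q$; the proposition follows at once. First I would discard the terms of $f(k)$ that do not depend on $k$ (namely $\binom p2+\binom q2+\alpha q$) and compute the discrete second difference $\Delta^2 f(k):=f(k+1)-2f(k)+f(k-1)$ for $1\le k\le q-1$. The affine term $k(2p-2\alpha-1)$ contributes $0$ to it, the term $-\binom k2$ contributes $-1$, and the term $|\beta-k|$ contributes $0$ unless $k=\beta$, in which case it contributes $2$. Hence $\Delta^2 f(k)=-1$ for every interior index $k\ne\beta$, while $\Delta^2 f(\beta)=1$: the sequence $f(0),f(1),\ldots,f(q)$ is strictly (discretely) concave apart from a single, very shallow upward kink at $k=\beta$.

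Next I would exploit this structure. If $\beta=0$ the claim is immediate since then $f(\beta)=f(0)$, so assume $1\le\beta\le q-1$ (recall $\beta<q$). On each of the two ranges $\{0,1,\ldots,\beta\}$ and $\{\beta,\beta+1,\ldots,q\}$, every interior index is distinct from $\beta$, so $f$ restricted to that range is strictly discretely concave and therefore attains its minimum over that range at one of its two endpoints. Combining these two facts, $\min_{0\le k\le q}f(k)=\min\{f(0),f(\beta),f(q)\}$, and it remains only to prove that $f(\beta)\ge\min\{f(0),f(q)\}$.

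To prove this last inequality I would argue by contradiction, assuming $f(\beta)<f(0)$ and $f(\beta)<f(q)$. Write $d_i=f(i+1)-f(i)$. Concavity on $\{0,\ldots,\beta\}$ gives $d_0\ge d_1\ge\cdots\ge d_{\beta-1}$, and since $\sum_{i=0}^{\beta-1}d_i=f(\beta)-f(0)<0$, the smallest of these, $d_{\beta-1}$, must be negative; as $f$ is integer-valued, $d_{\beta-1}\le-1$. Symmetrically, concavity on $\{\beta,\ldots,q\}$ gives $d_\beta\ge d_{\beta+1}\ge\cdots\ge d_{q-1}$, and $\sum_{i=\beta}^{q-1}d_i=f(q)-f(\beta)>0$ forces the largest of these, $d_\beta$, to be positive, so $d_\beta\ge1$. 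But then $\Delta^2 f(\beta)=d_\beta-d_{\beta-1}\ge2$, contradicting the identity $\Delta^2 f(\beta)=1$ established above. Hence $f(\beta)\ge\min\{f(0),f(q)\}$, and the proposition follows.

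I expect the only real obstacle to be conceptual rather than computational: $f$ is genuinely not concave over the whole of $\{0,\ldots,q\}$ — the term $|\beta-k|$ introduces an upward kink at $k=\beta$ that can in fact make $k=\beta$ a local minimum — so one cannot simply invoke that a concave function on an interval is minimized at an endpoint. The remedy is to localize the concavity argument to the two halves split at $k=\beta$ and then quantify the kink: because $\Delta^2 f(\beta)$ equals exactly $1$ and $f$ takes only integer values, there is no room for $f(\beta)$ to drop strictly below both $f(0)$ and $f(q)$. Everything else (the second-difference computation, and the fact that a discretely concave sequence attains its minimum at an endpoint) is routine.
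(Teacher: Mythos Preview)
Your proof is correct and shares its overall shape with the paper's argument: both split $\{0,\ldots,q\}$ at $k=\beta$, observe that $f$ is concave on each piece, reduce to showing $f(\beta)\ge\min\{f(0),f(q)\}$, and finish by contradiction. The difference lies in how the contradiction is extracted. The paper unwinds $f(\beta)<f(0)$ and $f(\beta)<f(q)$ algebraically, obtaining the pair of inequalities $4(p-\alpha)-2\le\beta$ and $\beta\le 4(p-\alpha)-q$, which forces $q\le2$ and then $4(p-\alpha)=3$, impossible for integers. Your route instead stays at the level of discrete second differences: from $f(\beta)<f(0)$ and concavity on $\{0,\ldots,\beta\}$ you get $d_{\beta-1}\le-1$, from $f(\beta)<f(q)$ and concavity on $\{\beta,\ldots,q\}$ you get $d_\beta\ge1$, and the resulting $\Delta^2 f(\beta)\ge2$ contradicts the computed value $\Delta^2 f(\beta)=1$. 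Your argument is cleaner and more structural---it never touches the specific coefficients $p,\alpha$ and would work verbatim for any integer-valued function that is a sum of an affine term, $-\binom{k}{2}$, and $|\beta-k|$---whereas the paper's argument, while more computational, makes the obstruction explicit in terms of the parameters.
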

\begin{proof}
First observe that $f$ is a concave (quadratic) function of $k$ on the interval $[0,\beta]$. It is also a concave function of $k$ on the interval $[\beta,q]$. As a consequence, in order to minimize it, we only need to consider its values when $k$ is equal to $0$, to $\beta$, and to $q$. In other words, it is sufficient to prove that
$$
f(\beta)\geq\min\{f(0),f(q)\}.
$$

Let us reach a contradiction under the assumption that $f(\beta)$ is less than both $f(0)$ and $f(q)$. An immediate consequence of this assumption is that $0<\beta<q$. On the one hand, $f(\beta)<f(0)$ can be rewritten as
$$
-{\beta\choose 2}+\beta(2p-2\alpha-1)<\beta.
$$

Since $\beta$ is positive, dividing this inequality by $\beta$ yields
\begin{equation}\label{p1cbe1}
4(p-\alpha)-2\leq\beta.
\end{equation}

On the other hand, $f(\beta)<f(q)$ can be rewritten as
$$
-{\beta\choose 2}+\beta(2p-2\alpha-1)<-{q\choose 2}+q(2p-2\alpha-1)+q-\beta.
$$

Reorganising the terms of this inequality yields
\begin{equation}\label{p1cbe2}
{q\choose 2}-{\beta\choose 2}-2(q-\beta)(p-\alpha)<0.
\end{equation}

Observe that
$$
{q\choose 2}-{\beta\choose 2}=(q-\beta)\frac{q+\beta-1}{2}.
$$
and recall that $q-\beta$ is positive. Hence, dividing (\ref{p1cbe2}) by $q-\beta$, one obtains
$$
\beta\leq4(p-\alpha)-q.
$$

Combining this inequality with (\ref{p1cbe1}) results in the following estimate of $\beta$.
\begin{equation}\label{p1cbe3}
4(p-\alpha)-2\leq\beta\leq4(p-\alpha)-q.
\end{equation}

As an immediate consequence, $q$ is at most $2$. Since $\beta$ and $q$ are integers satisfying $0<\beta<q$, this proves that $q$ is equal to $2$ and $\beta$ to $1$. In turn, it follows from (\ref{p1cbe3}) that
$$
4(p-\alpha)=3,
$$
which is impossible because p and $\alpha$ are integers.
\end{proof}

Using Proposition \ref{p1cb}, we can prove the following bound on $\mathrm{dist}(T_1,T_2)$.

\begin{lemma}
\label{l2cb}
$\displaystyle\mathrm{dist}(T_1,T_2)\geq {p\choose 2}+\alpha{q}+\min\left\{{q\choose 2}+\beta,2q(p-\alpha)-\beta\right\}$.
\end{lemma}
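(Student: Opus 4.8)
The statement to prove is a lower bound on $\mathrm{dist}(T_1,T_2)$, the rotation distance between two explicit brooms on $\mathrm{SPK}_{p,q}$, and Proposition~\ref{p1cb} has already reduced the minimization of the auxiliary quantity $f(k)$ to its endpoints $f(0)$ and $f(q)$. So the real work is to show that \emph{any} rotation sequence $\pi$ transforming $T_1$ into $T_2$ has length at least $f(k)$ for the value of $k$ that records some structural feature of $\pi$ — typically, $k$ will be the number of vertices of $Q$ that lie in the handle of some intermediate broom, or the number of $y_j$'s that end up below $x_p$ versus above it. Once that is in place, Proposition~\ref{p1cb} gives $|\pi|\ge f(k)\ge\min\{f(0),f(q)\}$, and a direct computation shows that $\min\{f(0),f(q)\}$ equals the claimed expression $\binom{p}{2}+\alpha q+\min\{\binom{q}{2}+\beta,\,2q(p-\alpha)-\beta\}$ (indeed $f(0)=\binom{p}{2}+\binom{q}{2}+\alpha q+\beta$ and $f(q)=\binom{p}{2}+\alpha q+2q(p-\alpha)-\beta$ after cancelling $\binom{q}{2}-\binom{q}{2}$, so this last step is bookkeeping).

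**The counting argument.** To get the per-sequence bound $|\pi|\ge f(k)$, I would track a potential or a collection of ``debts'' that $\pi$ must pay. The three summands $\binom{p}{2}$, $\binom{q}{2}-\binom{k}{2}$, and $\alpha q + k(2p-2\alpha-1)+|\beta-k|$ suggest three separate sources of rotations that can be charged independently. The term $\binom{p}{2}$ counts inversions among the clique vertices $x_1,\dots,x_p$: since $P$ induces a clique, the restriction of any search tree to $P$ is a permutation, rotations restricted to $P$ are adjacent transpositions (this is the permutohedron), and $T_1$ and $T_2$ have opposite orders on $P$, forcing $\binom{p}{2}$ such moves — formally via a projection-type argument onto the convex set $P$, or a direct inversion count. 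The term $\binom{q}{2}-\binom{k}{2}$ should count rotations needed to reorder the $q$ independent vertices, where $k$ of them are ``parked'' in a configuration (the handle) that does not need to be sorted against the rest, saving $\binom{k}{2}$. The remaining term $\alpha q+k(2p-2\alpha-1)+|\beta-k|$ counts the bichromatic rotations — those moving a $y_j$ past an $x_i$ — and here $\alpha q$ is the cost of moving the $\alpha$ clique vertices initially above $Q$ in $T_1$ past all of $Q$, $k(2p-2\alpha-1)$ is the cost associated with the $k$ parked vertices crossing clique vertices in both directions, and $|\beta-k|$ is a correction term from the target configuration $T_2$ where $x_p$ sits among the $y_j$'s at position $\beta$.

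**Main obstacle.** The delicate point is the choice of $k$ and the simultaneous validity of all three lower bounds against the \emph{same} $\pi$ — the three charging schemes must not double-count the same rotation, and $k$ must be chosen (as a function of $\pi$, e.g.\ as the number of $y$-vertices that are descendants of $x_p$ at some critical moment, or that never leave the handle) so that it is consistent across the argument. I expect the cleanest route is to fix $\pi$, let $k$ be the number of $y_j$ that are ancestors of every $x_i$ at the end of the broom-handle phase (or some equally canonical quantity), then argue: (i) project onto $P$ to extract $\binom p2$ monochromatic-in-$P$ rotations; (ii) among the rotations not counted in (i), isolate the bichromatic ones and show a crossing/inversion argument forces at least $\alpha q+k(2p-2\alpha-1)+|\beta-k|$ of them, using that $x_{\alpha+1},\dots,x_p$ must each be pulled above the relevant $y$-vertices and $x_p$ in particular threads through position $\beta$; (iii) among the remaining monochromatic-in-$Q$ rotations, use a projection onto $Q$ (which is an independent set, so its search trees are again brooms/paths on $Q$ and rotations are transpositions) together with the observation that $k$ vertices sit in a sorted sub-block to get $\binom q2-\binom k2$. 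Summing (i)--(iii) gives $|\pi|\ge f(k)$, and Proposition~\ref{p1cb} finishes it. The arithmetic identities $f(0)$ and $f(q)$ matching the two branches of the $\min$ should be checked explicitly but are routine; the genuinely non-routine part is verifying that (ii) — the bichromatic count with the $|\beta-k|$ term — holds for every $\pi$ and every resulting value of $k$, since that is where the target broom's unusual shape (with $x_p$ buried among the $y_j$'s) interacts most subtly with the crossing count.
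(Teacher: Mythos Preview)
Your high-level plan is exactly the paper's: fix a shortest rotation sequence $\pi$, attach to it an integer $k=k(\pi)$, show $|\pi|\ge f(k)$ by charging rotations to disjoint types, and invoke Proposition~\ref{p1cb}. Your endpoint computations $f(0)$ and $f(q)$ are correct. The gap is precisely where you flag it: you never settle on what $k$ is, and none of your tentative choices (``number of $y_j$ in the handle of some intermediate broom'', ``number of $y_j$ below $x_p$'', ``ancestors of every $x_i$ at some phase'') works---the first is always $q$, the second is the constant $\beta$, and the third is not well-defined. The paper's choice is the missing idea: let $L\subseteq Q$ be the set of those $y_j$ that appear as a \emph{leaf} of at least one broom along $\pi$, and set $k=|L|$.

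With this $k$ the accounting becomes clean and genuinely partitions the rotations by the types of their endpoints. The $q-k$ vertices of $Q\setminus L$ stay in the handle throughout, so their relative order is well-defined at every step and every pair must be transposed (reversed between $T_1$ and $T_2$): this gives $\binom{q-k}{2}$, and together with the $\binom{p}{2}$ inversions inside $P$ and the $k(q-k)$ forced swaps between $L$ and $Q\setminus L$ (again by reversal of $Q$) one obtains $\binom{p}{2}+\binom{q-k}{2}+k(q-k)=\binom{p}{2}+\binom{q}{2}-\binom{k}{2}$. For the bichromatic part, split according to whether the $Q$-vertex lies in $L$: a vertex of $Q\setminus L$ must swap with each of $x_1,\dots,x_\alpha$ (reversed), and if $k\le\beta$ at least $\beta-k$ of them must also swap with $x_p$; a vertex of $L$ must go below $x_{\alpha+1},\dots,x_p$ on the way down to becoming a leaf and above $x_1,\dots,x_{p-1}$ on the way back up, and if $k>\beta$ at least $k-\beta$ of them must also climb back above $x_p$. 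Summing gives $\alpha(q-k)+k(2p-\alpha-1)+|\beta-k|=\alpha q+k(2p-2\alpha-1)+|\beta-k|$. One side remark: your proposed ``projection onto $Q$'' via Lemma~\ref{lem:projection} is not available here, since the simplicial vertices of $\mathrm{SPK}_{p,q}$ are exactly the $y_j$, so every convex set contains all of $P$; the direct inversion argument above, using only that $Q\setminus L$ stays linearly ordered in the handle, is what replaces it.
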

\begin{proof}
Consider a shortest sequence of rotations transforming $T_1$ into $T_2$, and the corresponding sequence of brooms.
We denote by $L\subseteq Q$ the set of vertices of the independent set that appear at least once as a leaf of a broom in this sequence, and let $k=|L|$. Hence, the handle of every broom in the sequence has at least $p + q - k$ vertices.

We first lower bound the number of rotations involving two vertices of $P$ or two vertices of $Q\mathord{\setminus}L$. Since the relative order of any such pair of vertices is reversed between $T_1$ and $T_2$, there must be at least
$$
{p \choose 2}+{q-k\choose 2}
$$
rotations involving them. Now observe that the order of a vertex in $Q\mathord{\setminus}L$ and any of the vertices $x_1$ to $x_\alpha$ is also reversed between $T_1$ and $T_2$. Hence there must be at least $\alpha(q-k)$ additional rotations involving any such pair of vertices. Moreover, if $k\leq\beta$, then at least $\beta-k$ vertices from $Q\mathord{\setminus}L$ must be exchanged with $x_p$.

As a consequence, there are at least
$$
\alpha(q-k)+\max\{0,\beta-k\}
$$
rotations involving a vertex in $Q\mathord{\setminus}L$ and $x_p$.

We can also give a lower bound on the number of rotations involving a vertex from $L$ and a vertex from $P$. Each vertex of $L$ must go at least once below $x_{\alpha+1}$ to $x_p$ when moving down from their position in $T_1$ in order to become a leaf, which costs at least $k(p-\alpha)$ rotations. Similarly, each vertex of $L$ must go at least once above $x_1$ to $x_{p-1}$ when moving up to their position in $T_2$. Moreover, if $k>\beta$, there must be at least $k-\beta$ vertices from $L$ that each go once above $x_p$. This costs an additional $k(p-1)+\max\{0,k-\beta\}$ rotations, and the number of rotations involving a vertex from $L$ and a vertex from $P$ is then at least
$$
k(2p-\alpha-1)+\max\{0,k-\beta\}
$$

Every vertex of $L$ must also be swapped with every vertex in $Q\mathord{\setminus}L$ in the handle of the broom, 
either when moving down to the set of leaves, or back up in the handle, which costs $k(q-k)$ rotations.

Overall, the number of rotations in the considered sequence is at least
$$
{p \choose 2}+{q-k\choose 2}+\alpha(q-k)+\max\{0,\beta-k\}+k(2p-\alpha-1)+\max\{0,k-\beta\}+k(q-k)
$$
which turns out to be exactly $f(k)$. Therefore, by Proposition \ref{p1cb},
$$
\mathrm{dist}(T_1,T_2)\geq\min\{f(0),f(q)\}.
$$

Observing that
$$
\left\{
\begin{array}{l}
\displaystyle f(0)={p\choose2}+{q\choose2}+\alpha{q}+\beta,\\[\bigskipamount]
\displaystyle f(q)={p\choose2}+\alpha{q}+2q(p-\alpha)-\beta,\\
\end{array}
\right.
$$
completes the proof.
\end{proof}

By choosing appropriate values for $\alpha$ and $\beta$, we get the following.

\begin{lemma}\label{l3cb}
If $q\geq4p+1$ then
$$
\delta(\mathcal{A}(\mathrm{SPK}_{p,q}))\geq 2pq+{p\choose 2}
$$
and otherwise,
$$
\delta(\mathcal{A}(\mathrm{SPK}_{p,q}))\geq pq+\left\lfloor\frac{1}{2}{q\choose 2}\right\rfloor+{p\choose 2}\!.
$$
\end{lemma}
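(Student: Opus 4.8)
The plan is to derive both inequalities of Lemma~\ref{l3cb} from Lemma~\ref{l2cb} by optimizing the free parameters $\alpha$ and $\beta$. Recall that Lemma~\ref{l2cb} gives, for any non-negative integers $\alpha<p$ and $\beta<q$,
$$
\mathrm{dist}(T_1,T_2)\geq {p\choose 2}+\alpha q+\min\left\{{q\choose 2}+\beta,\;2q(p-\alpha)-\beta\right\},
$$
and that $\delta(\mathcal{A}(\mathrm{SPK}_{p,q}))\geq\mathrm{dist}(T_1,T_2)$ for every admissible choice of the two brooms, hence for every admissible $(\alpha,\beta)$. So the task is purely to pick $(\alpha,\beta)$ making the right-hand side as large as claimed in each of the two regimes.

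First I would treat the case $q\geq 4p+1$. Here I want the clique vertices $x_1,\dots,x_p$ to contribute as much as possible, so I would try $\alpha=p-1$ (the largest allowed value). Then the term $\alpha q=(p-1)q$, and the two quantities inside the minimum become ${q\choose 2}+\beta$ and $2q-\beta$. Since $q\geq 4p+1$ is large, $2q-\beta$ is the smaller of the two once $\beta$ is not too big; choosing $\beta=0$ gives $\min\{{q\choose 2},2q\}=2q$ (using $q\geq 5$, say, so that ${q\choose2}\geq 2q$). This yields $\mathrm{dist}(T_1,T_2)\geq {p\choose 2}+(p-1)q+2q={p\choose 2}+(p+1)q$, which is not quite $2pq+{p\choose2}$. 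So instead I would balance: choose $\alpha$ so that $\alpha q$ and $2q(p-\alpha)$ are comparable, i.e. $\alpha\approx 2(p-\alpha)$, giving $\alpha\approx 2p/3$; but more carefully, with $\beta$ chosen to equalize the two branches of the $\min$, the bound becomes ${p\choose2}+\alpha q+\tfrac12\big({q\choose2}+2q(p-\alpha)\big)$ up to the $\pm\beta$ term. The right move is to realize that when $q\geq 4p+1$ we can afford $\alpha=p-1$ \emph{and} a large $\beta$: with $\alpha=p-1$, the first branch ${q\choose2}+\beta$ grows with $\beta$ while the second $2q-\beta$ shrinks, and they cross near $\beta$ with ${q\choose2}+\beta=2q-\beta$, impossible since ${q\choose2}>2q$. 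I therefore expect the actual optimum to be $\alpha=0$, $\beta=q-1$: then $\alpha q=0$ and the two branches are ${q\choose2}+q-1$ and $2qp-(q-1)$; for $q\geq 4p+1$ the second is the smaller, giving roughly $2pq-q$, still short. The genuinely correct choice must make $f(0)$ and $f(q)$ (in the notation of Lemma~\ref{l2cb}'s proof) both large; since $f(0)={p\choose2}+{q\choose2}+\alpha q+\beta$ and $f(q)={p\choose2}+\alpha q+2q(p-\alpha)-\beta$, their sum is ${p\choose2}+{q\choose2}+2\alpha q+2q(p-\alpha)={p\choose2}+{q\choose2}+2pq$, \emph{independent of} $\alpha$. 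So $\min\{f(0),f(q)\}$ is maximized by equalizing them, i.e. choosing $\beta$ with ${q\choose2}+\beta=2q(p-\alpha)-\beta+({\rm nothing})$... more precisely $2\beta={q\choose2}-2q(p-\alpha)$, and then the common value is ${p\choose2}+\alpha q+\tfrac12\big({q\choose2}+2q(p-\alpha)\big)$, maximized over $\alpha$ at... again $\alpha$-independent in the leading term. Thus the bound one can extract is $\tfrac12\big({p\choose2}\cdot 2+{q\choose2}+2pq\big)$-ish, but when $q\geq 4p+1$ the required $\beta$ for equalization exceeds $q-1$, so equalization is infeasible and the best feasible choice is the boundary $\beta=q-1$ (or $\beta=0$), at which one branch dominates and $\min\{f(0),f(q)\}=f(q)=2pq+{p\choose2}+\alpha q-2q\alpha+\cdots$; taking $\alpha=0,\beta=0$ gives $f(q)={p\choose2}+2pq$ exactly. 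That is the first bound.

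For the case $q\leq 4p+1$, equalization \emph{is} feasible, and I would choose $\alpha=0$ together with $\beta=\big\lceil\tfrac12\big({q\choose2}-2qp\big)\big\rceil$ or whatever integer makes $f(0)$ and $f(q)$ as close as possible; since $f(0)={p\choose2}+{q\choose2}+\beta$ and $f(q)={p\choose2}+2qp-\beta$, the optimal real $\beta$ is $\tfrac12\big(2qp-{q\choose2}\big)$, at which both equal ${p\choose2}+\tfrac12\big({q\choose2}+2qp\big)=pq+\tfrac12{q\choose2}+{p\choose2}$. Rounding $\beta$ to the nearest integer loses at most the floor, giving $\delta\geq pq+\lfloor\tfrac12{q\choose2}\rfloor+{p\choose2}$, matching the claim. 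I would double-check that the chosen $\beta$ lies in $[0,q-1]$ precisely when $q\leq 4p+1$ (which is exactly the threshold where the two cases meet), and that $\alpha=0<p$ is always admissible.

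The main obstacle I anticipate is purely bookkeeping: verifying that the integer $\beta$ realizing the balance between $f(0)$ and $f(q)$ stays within the admissible range $0\le\beta\le q-1$ in the regime $q\le 4p+1$ and falls outside it (forcing the boundary choice $\beta=0$) when $q\ge 4p+1$, and checking the floor correction when $2qp-{q\choose2}$ is odd. There is also a small subtlety in confirming that $\alpha=0$ really is optimal given that $f(0)+f(q)$ is $\alpha$-independent — one must check that increasing $\alpha$ never lets one push the \emph{feasible} $\beta$ closer to the balance point, which I expect follows from the monotonicity of the crossover threshold in $\alpha$. None of this is deep, but it must be done carefully to land exactly on the stated constants.
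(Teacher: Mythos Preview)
Your treatment of the case $q\geq 4p+1$ is fine once the dust settles: the choice $\alpha=\beta=0$ gives exactly $\min\{f(0),f(q)\}=f(q)={p\choose 2}+2pq$, and this is precisely what the paper does.

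There is, however, a genuine gap in your handling of the case $q\leq 4p$. You propose to keep $\alpha=0$ and to choose $\beta$ as the integer nearest to $pq-\tfrac12{q\choose 2}$, and you say you will ``double-check that the chosen $\beta$ lies in $[0,q-1]$ precisely when $q\leq 4p+1$''. That check fails. With $\alpha=0$ the balancing value of $\beta$ is approximately $pq-\tfrac12{q\choose 2}$, and while this is nonnegative exactly when $q\leq 4p+1$, it typically exceeds $q-1$. For instance, with $p=3$ and $q=4$ the balancing $\beta$ is $12-3=9$, far outside $\{0,1,2,3\}$; you would then be forced to the boundary $\beta=q-1$, and the resulting bound ${p\choose 2}+\min\{{q\choose 2}+q-1,\,2pq-q+1\}$ falls short of the claimed $pq+\lfloor\tfrac12{q\choose 2}\rfloor+{p\choose 2}$.

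The fix is the observation you almost made yourself: rewriting the bound from Lemma~\ref{l2cb} as
\[
{p\choose 2}+\min\Bigl\{{q\choose 2}+(\alpha q+\beta),\;2pq-(\alpha q+\beta)\Bigr\},
\]
one sees that it depends on $\alpha$ and $\beta$ only through $s:=\alpha q+\beta$, which ranges over \emph{all} integers in $\{0,1,\dots,pq-1\}$ as $(\alpha,\beta)$ varies over its admissible rectangle. So the role of $\alpha$ is precisely to absorb the overflow of the balancing $\beta$; one should take $s=pq-\bigl\lceil\tfrac12{q\choose 2}\bigr\rceil$ and read off $\alpha=\lfloor s/q\rfloor$, $\beta=s\bmod q$. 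This is exactly what the paper does (it parametrizes via $\gamma=(4p+1-q)/4$ and sets $\alpha=\lfloor\gamma\rfloor$, $\beta=\lfloor q(\gamma-\lfloor\gamma\rfloor)\rfloor$, which amounts to the same thing). Your suspicion that ``increasing $\alpha$ never lets one push the feasible $\beta$ closer to the balance point'' is therefore backwards: increasing $\alpha$ is \emph{necessary}, not harmful.
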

\begin{proof}
First pick $\alpha=\beta=0$. According to Lemma \ref{l2cb},
$$
\mathrm{dist}(T_1,T_2)\geq {p\choose 2}+\min\left\{{q\choose 2},2pq\right\}.
$$

Further assume that $q\geq4p+1$. In this case,
$$
2pq\leq{q\choose2}
$$
and we obtain the desired bound on $\delta(\mathcal{A}(\mathrm{SPK}_{p,q}))$. Now assume that $q\leq4p$. Observe that, if $q$ is equal to $1$, then $\mathrm{SPK}_{p,q}$ is just a complete graph on $p+1$ vertices and its graph associahedron is the permutohedron of diameter
$$
{p+1\choose2}=p+{p\choose2}.
$$
Hence the announced bound holds in this case, and we can assume that $q$ is at least $2$. Denote
$$
\gamma=\frac{4p+1-q}{4}.
$$

Pick $\alpha=\lfloor\gamma\rfloor$ and $\beta=\lfloor{q(\gamma-\lfloor\gamma\rfloor)}\rfloor$. Note that $\alpha$ is non-negative because $q$ is at most $4p$. In addition, $\alpha$ must be less than $p$ because $q$ is at least $2$ and by construction, $\beta$ is then a non-negative integer less than $q$. In other words, $\alpha$ and $\beta$ satisfy the requirements we imposed on them when defining $T_1$ and $T_2$.

Further observe that $\alpha{q}+\beta=\lfloor{q\gamma}\rfloor$ and that
$$
q\gamma=pq-\frac{1}{2}{q\choose2}.
$$

As a consequence,
$$
\alpha{q}+\beta=pq-\left\lceil\frac{1}{2}{q\choose2}\right\rceil
$$
and in turn,
\begin{equation}\label{l3cbe1}
\left\{
\begin{array}{l}
\displaystyle\!{q\choose2}+\alpha{q}+\beta=pq+\left\lfloor\frac{1}{2}{q\choose2}\right\rfloor\!\!,\\[\bigskipamount]
\displaystyle2pq-\alpha{q}-\beta={pq}+\left\lceil\frac{1}{2}{q\choose2}\right\rceil\!\!.\\
\end{array}
\right.
\end{equation}

However, it follows from Lemma \ref{l2cb} that
$$
\displaystyle\mathrm{dist}(T_1,T_2)\geq{p\choose 2}+\min\left\{{q\choose 2}+\alpha{q}+\beta,2pq-\alpha{q}-\beta\right\}\!.
$$

Combining this with (\ref{l3cbe1}) completes the proof.
\end{proof}

\subsection{Upper bound}

\begin{lemma}
\label{l4cb}
Let $p$ and $q$ be two positive integers. 
Then
$$
\delta(\mathcal{A}(\mathrm{SPK}_{p,q})) \leq 2pq + {p\choose 2} = 2m - {p\choose 2}.
$$
\end{lemma}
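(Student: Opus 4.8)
The plan is to exhibit, for an \emph{arbitrary} broom $T$ on $\mathrm{SPK}_{p,q}$, a sequence of at most $m = pq + \binom{p}{2}$ rotations transforming $T$ into a fixed canonical broom $T^\star$; since rotation distance is a metric, any two brooms are then within $2m$ of each other, and subtracting the overcount of rotations that are never actually needed will give the claimed $2m - \binom{p}{2}$. I would take $T^\star$ to be the broom whose handle is $y_1, x_1, x_2, \ldots, x_p$ (with $x_p$ the center of the star carrying $y_2, \ldots, y_q$ as leaves), or a similarly convenient choice. The key structural fact to exploit is the one already highlighted in the excerpt: every search tree on $\mathrm{SPK}_{p,q}$ is a broom, so it is determined by (i) the linear order in which the $p$ clique-vertices appear along the handle, (ii) which vertices of $Q$ are interspersed in the handle and in what positions, and (iii) the partition of the remaining $Q$-vertices onto the star.

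First I would set up a potential/counting argument in the spirit of the proof of Theorem~\ref{teo2tp} and Lemma~\ref{l2cb}, but now as an \emph{upper} bound. The idea is to reach $T^\star$ in three phases. Phase~1: bring all of $Q\setminus\{y_1\}$ down to the leaf set and pull $x_1, \ldots, x_p$ into their correct relative order in the handle. Moving a vertex $y_j$ from its current handle position down past the clique vertices below it costs at most one rotation per clique vertex it must cross, i.e.\ at most $p$ rotations for each of the $q-1$ such vertices, but this naive bound gives $p(q-1)$, which is too large by itself; the refinement is that a $y_j$ only needs to cross the clique vertices that are \emph{below} it, and reordering the $x_i$'s among themselves costs at most $\binom{p}{2}$ rotations, while each $x_i$--$y_j$ inversion is charged exactly once. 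Phase~2: reorder the clique vertices and reinsert $y_1$, costing the $\binom{p}{2}$ clique--clique rotations plus at most $q-1$ more. Phase~3: the leaves are already where they belong since all of $Q\setminus\{y_1\}$ hangs off $x_p$. Carefully bookkeeping, every rotation is charged either to a clique--clique inversion (at most $\binom{p}{2}$ of these, each used at most once going up \emph{or} down but the round trip is avoided by doing the reorder once) or to a clique--$Q$ pair (at most $pq$, and I must argue each such pair is crossed at most twice over the whole sequence — once on the way down to leaf status, once on the way back, which is where the factor of $2$ on $pq$ comes from). Summing gives $2pq + \binom{p}{2}$.

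The delicate point, and the one I would spend the most care on, is getting the constant right: showing the clique--clique inversions need only $\binom{p}{2}$ rotations total (not $2\binom{p}{2}$) while the clique--$Q$ inversions genuinely need the full $2pq$. The asymmetry is what produces the $2m - \binom{p}{2} = 2pq + \binom{p}{2}$ rather than $2m$. Concretely, I would choose the canonical broom $T^\star$ so that its handle already has the clique vertices in a fixed order, route an arbitrary $T$ to $T^\star$ by first fixing the clique order within the handle (absorbing the $\binom{p}{2}$ term once), then sliding each $Q$-vertex to its target — and the fact that in $T^\star$ all of $Q$ except one fixed vertex sits on the star means a $Q$-vertex need only descend to the leaves and never climb back, halving what a generic target would demand. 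I would double-check the edge cases $q=1$ (permutohedron, where the bound must reduce to $\binom{p+1}{2}$) and $p=1$ (stellohedron), as these also serve as sanity checks on the formula. The main obstacle is thus purely the constant-factor bookkeeping; the existence of a linear-length route is immediate from Theorem~\ref{teo2tp} since complete split graphs are trivially perfect, so this lemma is really a sharpening of that theorem for this particular family.
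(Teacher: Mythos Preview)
Your approach has a genuine gap at exactly the point you flag as ``delicate.'' Routing every broom through a single \emph{fixed} canonical $T^\star$ cannot yield better than $2m = 2pq + 2\binom{p}{2}$: on the leg $T\to T^\star$ you may pay up to $\binom{p}{2}$ rotations to reorder the clique vertices into the fixed $P$-order of $T^\star$, and on the leg $T^\star\to T'$ you may pay another $\binom{p}{2}$, since the $P$-orders of $T$ and $T'$ are arbitrary and independent. Your phrase ``absorbing the $\binom{p}{2}$ term once'' is precisely what must be justified, and your phase description does not justify it --- the phases are executed once for $T$ and once for $T'$, so the clique-reordering cost is incurred on both legs. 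The ``subtracting the overcount'' step is therefore unsupported.

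The paper's fix is short and decisive: do \emph{not} use a single fixed intermediate. For each broom $T$, let $S_T$ be the broom obtained by sliding every $Q$-vertex in the handle down to the leaf set while \emph{preserving} the relative order of the $P$-vertices already present in $T$. Then $T\to S_T$ costs at most $pq$ rotations (only $P$--$Q$ swaps occur, never $P$--$P$ swaps), and likewise $T'\to S_{T'}$ costs at most $pq$. Now $S_T$ and $S_{T'}$ both have all of $Q$ as leaves and differ only in the order of their $p$ clique vertices along the handle, so $S_T\to S_{T'}$ costs at most $\binom{p}{2}$. Chaining gives $2pq+\binom{p}{2}$. The idea you are missing is to let the intermediate depend on the endpoint broom, so that the $P$-reordering happens exactly once in the middle rather than once on each leg.
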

\begin{proof}
Let $P$ and $Q$ be the bipartition of the vertex set of $\mathrm{SPK}_{p,q}$. Let $T$ and $T'$ be any two brooms on $\mathrm{SPK}_{p,q}$. Let $\pi_T$ (respectively, $\pi_{T'}$) be the permutation of the vertex set $P$ corresponding to the order in which the vertices appear in $T$ (respectively $T'$). Let $S_T$ (respectively $S_{T'}$) be the broom in which the first $p$ vertices are the vertices of $P$ under the permutation $\pi_T$ (respectively $\pi_{T'}$) and having the vertices of $Q$ as leaves. 
Observe that $T$ can be transformed into $S_T$ by at most $pq$ rotations. 
Similarly, $T'$ can be transformed into $S_{T'}$ by at most $pq$ rotations. 
By the observation that $S_T$ can be transformed into $S_{T'}$ with at most ${p\choose 2}$ rotations, the desired bound holds.
\end{proof}

We complement Lemma \ref{l4cb} with a stronger bound in the case when $q\leq4p$.
\begin{lemma}
\label{l5cb}
Assume that $q\leq4p$. Then
$$
\delta(\mathcal{A}(\mathrm{SPK}_{p,q})) \leq pq+\left\lfloor\frac{1}{2}{q\choose 2}\right\rfloor+{p\choose 2}\!.
$$
\end{lemma}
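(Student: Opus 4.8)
The plan is to prove the upper bound $\delta(\mathcal{A}(\mathrm{SPK}_{p,q})) \leq pq+\lfloor\frac{1}{2}\binom{q}{2}\rfloor+\binom{p}{2}$ when $q\leq 4p$ by exhibiting, for any two brooms $T$ and $T'$, a short rotation sequence through a well-chosen ``canonical'' intermediate broom. The key realization, mirroring the lower-bound analysis in Lemma~\ref{l2cb}, is that we are free to route the transformation through a broom in which some controlled number $k$ of vertices of $Q$ sit low in the handle (close to becoming leaves) while the remaining $q-k$ vertices of $Q$ are already pushed down to the star; the cost of such a routing, as a function of $k$, is again essentially $f(k)$ for a suitable choice of the parameters $\alpha,\beta$, and the point is that $\min\{f(0),f(q)\}$ is now the value we want to \emph{achieve} rather than merely beat. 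Concretely, I would first reduce, exactly as in Lemma~\ref{l4cb}, to the case where $T$ and $T'$ are already in ``star form'' (all of $Q$ as leaves): transforming $T$ into its star form $S_T$ and $T'$ into $S_{T'}$ costs at most $pq$ rotations \emph{in total}, but we must be more careful than in Lemma~\ref{l4cb}, since the crude bound $pq$ per tree is what we are trying to improve on. So the reduction should be: move the vertices of $Q$ down and rearrange $P$ simultaneously, accounting the $Q$-descents and the $P$-transpositions together.

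The heart of the argument is a bound on the distance between two \emph{star-form} brooms (equivalently, between a permutation of $P$ with $Q$ hanging below and another such). Here I would split $Q$ into a set $L$ of size $k$ of vertices that we will temporarily ``park'' high in the handle and the complementary set of size $q-k$ that stays at the bottom. Lifting $k$ vertices of $Q$ up past all of $P$ and back down, together with reshuffling $P$ by $\binom{p}{2}$ transpositions in the worst case and paying the $\binom{q-k}{2}+\binom{k}{?}$ style terms for reorderings within $Q$, yields a total matching $f(k)$ for appropriate $\alpha$ and $\beta$ determined by the two target permutations. Then Proposition~\ref{p1cb} (or rather its constructive content: $f$ is concave on $[0,\beta]$ and on $[\beta,q]$, so its minimum over integers in $[0,q]$ is attained at $0$, at $\beta$, or at $q$) tells us that choosing $k\in\{0,\beta,q\}$ optimally gives cost exactly $\min\{f(0),f(q)\}$, and the explicit values of $f(0)$ and $f(q)$ computed at the end of Lemma~\ref{l2cb}, together with the choice $\gamma=(4p+1-q)/4$, $\alpha=\lfloor\gamma\rfloor$, $\beta=\lfloor q(\gamma-\lfloor\gamma\rfloor)\rfloor$ from Lemma~\ref{l3cb}, collapse this to exactly $pq+\lfloor\frac12\binom{q}{2}\rfloor+\binom{p}{2}$. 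In other words, the upper bound is achieved by running the lower-bound construction ``in reverse'': the same $f(k)$ that lower-bounds the distance between the extremal pair $T_1,T_2$ also upper-bounds the distance between an \emph{arbitrary} pair, because any pair can be brought together by a routing of cost at most $f(k)$ for some valid $k$.

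I would carry out the steps in this order: (1) reduce to star-form brooms, carefully accounting so that the reduction cost is folded into the $pq$ and $\binom{p}{2}$ terms rather than double-counted; (2) given two star-form brooms, describe the explicit rotation sequence that parks a chosen set $L$ of $k$ vertices of $Q$ high, permutes $P$ into the target order, and drops everything back, and verify its length is at most $f(k)$ for the $\alpha,\beta$ induced by the instance; (3) optimize over $k$ using the concavity of $f$ on $[0,\beta]$ and $[\beta,q]$ to conclude the length is at most $\min\{f(0),f(q)\}$; (4) substitute the values of $\alpha,\beta,\gamma$ and simplify, using $q\gamma = pq-\frac12\binom{q}{2}$ and the identities in~\eqref{l3cbe1}, to land on the claimed expression; and note that $\min\{f(0),f(q)\}$ uses the floor on one side and the ceiling on the other, so the smaller of the two is the floor version, which is why the bound is stated with $\lfloor\cdot\rfloor$.

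The main obstacle I anticipate is step (2): making the routing genuinely cost at most $f(k)$ rather than something slightly larger. The danger is in the ``swap every vertex of $L$ with every vertex of $Q\setminus L$'' term $k(q-k)$ and the interaction between lifting $L$ and simultaneously reordering $P$ — one must be sure that pushing an $L$-vertex up and later down does not force \emph{extra} transpositions among $P$-vertices beyond the $\binom{p}{2}$ already counted, and that the $\binom{q-k}{2}$ internal reorderings of $Q\setminus L$ can be done while those vertices are leaves (so each such reordering is a single rotation). A clean way to handle this is to choose $L$ to be exactly the set of vertices that are ``out of place'' in a way that forces them up past $x_p$, so that the max-terms $\max\{0,\beta-k\}$ and $\max\{0,k-\beta\}$ appearing in $f(k)$ are matched on the nose; verifying this requires a short but delicate case analysis on the relative positions of $x_p$ and the $Q$-vertices in $T$ versus $T'$. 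Once that bookkeeping is pinned down, the rest is the same concavity-and-substitution computation already done for the lower bound, run in the opposite direction.
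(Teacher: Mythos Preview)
Your plan conflates the role of the parameters $\alpha,\beta$ and the function $f(k)$: in the lower-bound argument (Lemma~\ref{l2cb}) these quantities are attached to a \emph{specific} pair $T_1,T_2$ of brooms, and $\alpha,\beta$ literally count how many $P$-vertices sit above all $Q$-vertices in $T_1$ and how many $Q$-vertices sit above $x_p$ in $T_2$. For an arbitrary pair $T,T'$ there is no canonical way to extract a single $(\alpha,\beta)$, and even if you chose one, substituting the particular values $\alpha=\lfloor\gamma\rfloor$, $\beta=\lfloor q(\gamma-\lfloor\gamma\rfloor)\rfloor$ from Lemma~\ref{l3cb}---which were chosen to \emph{maximise} the lower bound over a one-parameter family of extremal pairs---makes no sense in the upper-bound direction. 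You write ``the $\alpha,\beta$ induced by the instance'' in one paragraph and then invoke the fixed $\gamma$-values in the next; these cannot both be right, and this is the gap that would prevent the plan from going through.

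The paper's proof is far simpler and avoids $f$ entirely. For an arbitrary pair $T,T'$ it introduces a single integer parameter $W=\sum_i(w_i+w_i')$, where $w_i$ (respectively $w_i'$) is the number of $P$-vertices above $y_i$ in $T$ (respectively $T'$). It then builds two explicit paths: in the first, push every $Q$-vertex in each tree down to the leaves and then sort $P$, costing at most ${p\choose 2}+2pq-W$; in the second, push every $Q$-vertex up above all of $P$ (never swapping two $Q$-vertices) and then sort $P$ and $Q$ separately, costing at most ${p\choose 2}+{q\choose 2}+W$. Taking the shorter path yields $\mathrm{dist}(T,T')\leq {p\choose 2}+\min\bigl\{2pq-W,\,{q\choose 2}+W\bigr\}$, and the right-hand side is maximised over $W$ when the two branches meet, i.e.\ at $W=pq-\frac12{q\choose 2}$, giving the bound. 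This is morally your ``$k=0$ versus $k=q$'' dichotomy, carried out directly on the given pair with the correct bookkeeping parameter; no intermediate $k$, no $(\alpha,\beta)$, and no appeal to Proposition~\ref{p1cb} are needed. Your anticipated ``main obstacle'' in step~(2) is exactly where the plan would stall, because there is no clean routing of cost $f(k)$ for a meaningful instance-dependent $(\alpha,\beta)$.
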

\begin{proof}
Consider two brooms $T$ and $T'$ be any two brooms on $\mathrm{SPK}_{p,q}$. If $1\leq{i}\leq{q}$, denote by $w_i$ the number of vertices of $P$ above $y_i$ in $T$. For instance, if $y_i$ is a leaf of $T$, then $w_i=p$. Similarly, let $w_i'$ be the number of vertices of $P$ above $y_i$ in $T'$. Let us now build two different paths from $T$ to $T'$. In the first path, all the vertices of $Q$ in the handle of $T$ are first moved down to the leaves, which takes exactly
$$
pq-\sum_{i=1}^qw_i
$$
rotations. Doing the same in $T'$ takes 
$$
pq-\sum_{i=1}^qw_i'
$$
rotations. The two resulting brooms can then be transformed into one another by just sorting the $p$ vertices of $P$ that remain in their handle, hence producing a path of length at most
$$
{p\choose2}+2pq-\sum_{i=1}^qw_i+w_i'.
$$

In the second path, all the vertices of $Q$ (including the ones that are leaves) of $Q$ are moved up in such a way that all the vertices of $P$ are below all the vertices of $Q$ within the handle of the broom. We assume here that these moves never exchange two vertices of $Q$. Therefore, this takes exactly
$$
\sum_{i=1}^qw_i
$$
rotations. Doing the same in $T'$ takes another
$$
\sum_{i=1}^qw_i'
$$
rotations. Now, the two resulting brooms can be changed into one another by sorting the vertices of $P$ and the vertices of $Q$ separately within their handles resulting in a path of length at most
$$
{p\choose2}+{q\choose2}+\sum_{i=1}^qw_i+w_i'.
$$

We have therefore proven that
$$
\mathrm{dist}(T,T')\leq\min\left\{{p\choose2}+2pq-W,{p\choose2}+{q\choose2}+W\right\}
$$
where $W$ is the sum of all $w_i$ and all $w_i'$. Now observe that this bound is largest possible when
$$
W=pq-\frac{1}{2}{q\choose2}.
$$

As a consequence,
$$
\mathrm{dist}(T,T')\leq{pq}+\frac{1}{2}{q\choose2}+{p\choose2},
$$
which immediately provides the desired upper bound on $\delta(\mathcal{A}(\mathrm{SPK}_{p,q}))$.
\end{proof}

We obtain Theorem \ref{thm:csga} by combining Lemmas~\ref{l3cb}, \ref{l4cb}, and \ref{l5cb}. Note that $\mathcal{A}(\mathrm{SPK}_{1,n})$ is the stellohedron,
the associahedron of the star $S_{1,n}$. Therefore, we recover the following result as a special case of Theorem~\ref{thm:csga}.

\begin{corollary}[Manneville-Pilaud~\cite{MP15}]
\label{cor:stello}
Let $n\geq 5$ be an integer. Then, $\delta(\mathcal{A}(\mathrm{S}_{1,n})) = 2n$.
\end{corollary}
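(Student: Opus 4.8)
The plan is to specialize Theorem~\ref{thm:csga} to the case $p=1$. The star $\mathrm{S}_{1,n}$ is exactly the complete split graph $\mathrm{SPK}_{1,n}$: it has a single ``clique'' vertex (the center) adjacent to an independent set of $n$ leaves, which is precisely the partition $(P,Q)$ with $|P|=1$ and $|Q|=n$ required in the definition of $\mathrm{SPK}_{p,q}$. Hence $\mathcal{A}(\mathrm{S}_{1,n})=\mathcal{A}(\mathrm{SPK}_{1,n})$ and it suffices to evaluate the formula of Theorem~\ref{thm:csga} at $p=1$, $q=n$.

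First I would determine which of the two cases of Theorem~\ref{thm:csga} applies. The dichotomy is governed by the inequality $q\ge 4p+1$, which for $p=1$, $q=n$ reads $n\ge 5$; this is exactly the hypothesis of the corollary. So we are always in the first case, and the diameter is
$$
\delta(\mathcal{A}(\mathrm{SPK}_{1,n})) = 2pq+\binom{p}{2} = 2\cdot 1\cdot n + \binom{1}{2} = 2n,
$$
since $\binom{1}{2}=0$. This gives $\delta(\mathcal{A}(\mathrm{S}_{1,n}))=2n$, as claimed.

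There is essentially no obstacle here: the only points requiring a moment of care are the identification $\mathrm{S}_{1,n}=\mathrm{SPK}_{1,n}$ and the observation that the threshold $n\ge5$ in the corollary matches the threshold $q\ge4p+1$ at $p=1$, so that the $p=1$ case never falls under the second (``otherwise'') branch of Theorem~\ref{thm:csga}. One may additionally remark that this recovers the known value of the stellohedron diameter from Manneville and Pilaud~\cite{MP15}, so the corollary serves as a consistency check rather than as a new result, and its proof is a one-line substitution.
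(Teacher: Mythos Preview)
Your proof is correct and follows exactly the paper's own approach: the paper simply notes that $\mathcal{A}(\mathrm{SPK}_{1,n})$ is the stellohedron and recovers the result as the special case $p=1$, $q=n$ of Theorem~\ref{thm:csga}. Your additional observation that the hypothesis $n\geq5$ matches the threshold $q\geq4p+1$ makes the specialization fully explicit.
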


\section{Associahedra of complete bipartite graphs}
\label{sec:cb}

In this section, we study the diameter of $\mathcal{A}(\mathrm{K}_{p,q})$, where $\mathrm{K}_{p,q}$ is the complete bipartite graph composed by two disjoint independent sets of vertices, $P$ of size $p$ and $Q$ of size $q$, and having all the edges between these two sets.
As in the previous section, we denote by $x_1$ to $x_p$ the vertices of $P$, and by $y_1$ to $y_q$ the vertices of $Q$.
We note again that the search trees on $\mathrm{K}_{p,q}$ are brooms, whose handle fully contains one of the two subsets $P,Q$.
Scheffler~\cite{S93} showed that the tree-depth of $\mathrm{K}_{p,q}$ is equal to $\min\{p,q\}+1$. 
Erokhovets~\cite{E09} also considered these polytopes and proved that they satisfy Gal's conjecture, extending a result from Postnikov et al.~\cite{PW08}.

\subsection{Lower bound}

In order to prove a lower bound for $\delta(\mathcal{A}(\mathrm{K}_{p,q}))$, we compute a lower bound for the distance of two brooms. The handle of the first broom, $T_1$, is made up of the vertices in $Q$ in the order
$$
{\color{darkblue}y_1},{\color{darkblue}y_2},{\color{darkblue}\ldots},{\color{darkblue}y_q}
$$
and its leaves are exactly the vertices in P. The second broom, $T_2$, is defined just as $T_1$ except that the order of the vertices in the handle is
$$
{\color{darkblue}y_q},{\color{darkblue}y_{q-1}},{\color{darkblue}\ldots},{\color{darkblue}y_1}
$$

These brooms are depicted in Figure \ref{fig4star} when $p=3$. 
\begin{figure}[htb]
\begin{center}
\includegraphics[scale=.6,page=2]{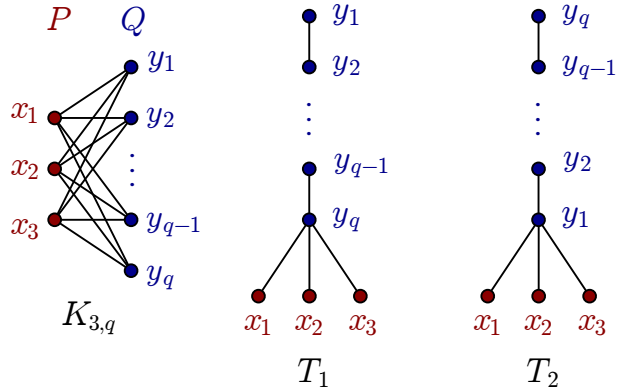}
\end{center}
\caption{The complete bipartite graph $\mathrm{K}_{3,q}$ for some $q \geq 4$ and brooms on $\mathrm{K}_{3,q}$.}
\label{fig4star}
\end{figure}

\begin{lemma}
\label{l2sp}
Let $p,q$ be positive integers with $q \geq 4p+1$. 
Then $\mathrm{dist}(T_1,T_2) \geq 2pq$.
\end{lemma}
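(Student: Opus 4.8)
The plan is to mirror the counting argument behind Lemma~\ref{l2cb}, specialized to the situation where both extremal brooms have all of $P$ as leaves and where $P$ carries no internal edges. I would fix a shortest sequence of rotations taking $T_1$ to $T_2$ and the corresponding sequence of brooms, let $L\subseteq Q$ be the set of vertices of $Q$ that occur as a leaf in at least one broom of the sequence, and put $k=|L|$. Two elementary facts drive the argument. First, every search tree on $\mathrm{K}_{p,q}$ is a broom whose handle contains all of $P$ or all of $Q$; hence a vertex of $Q$ is, at every moment, either a leaf or an internal vertex of the handle, and as soon as some vertex of $Q$ is a leaf the handle must contain all of $P$, which then lies entirely above that leaf. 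Second, adjacent vertices of $G$ are comparable in every search tree, and a single rotation changes the ancestor relation of exactly one pair of vertices, namely the pair it acts on; so for an always-comparable pair a net reversal between two brooms forces a rotation on that pair, and passing through both orientations and back forces at least two.

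With this in hand I would bound the length of the sequence from below by splitting the relevant rotations into three pairwise disjoint families. Pairs contained in $Q\setminus L$ never yield two simultaneous leaves, hence are always comparable; since the handle orders of $T_1$ and $T_2$ are reverses of one another, every such pair is reversed, so at least $\binom{q-k}{2}$ rotations act on them. A pair with one vertex in $L$ and one in $Q\setminus L$ is likewise never a pair of simultaneous leaves, hence always comparable and reversed, contributing at least $k(q-k)$ further rotations. Finally, for $y\in L$ and $x\in P$: in $T_1$ the vertex $y$ lies above $x$, at the moment $y$ is a leaf $x$ lies above $y$, and in $T_2$ again $y$ lies above $x$; as $x$ and $y$ are adjacent in $\mathrm{K}_{p,q}$ this relation is reversed at least twice, forcing at least two rotations on $\{x,y\}$, hence at least $2pk$ rotations involving a vertex of $L$ and a vertex of $P$. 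Adding the three contributions, the sequence has length at least
$$
g(k)=\binom{q-k}{2}+k(q-k)+2pk=\binom{q}{2}-\binom{k}{2}+2pk,
$$
the second equality being the identity $\binom{q-k}{2}+k(q-k)=\tfrac12(q-k)(q+k-1)=\binom{q}{2}-\binom{k}{2}$.

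It then remains to check that $g(k)\ge 2pq$ for every integer $k$ with $0\le k\le q$, using $q\ge 4p+1$. The inequality $g(k)\ge 2pq$ rewrites as $\binom{q}{2}-\binom{k}{2}\ge 2p(q-k)$, i.e.\ $\tfrac12(q-k)(q+k-1)\ge 2p(q-k)$; for $k=q$ this is an equality, and for $k<q$ one divides by $q-k>0$ to reach the equivalent inequality $q+k-1\ge 4p$, which is immediate from $q\ge 4p+1$ and $k\ge 0$. This yields $\mathrm{dist}(T_1,T_2)\ge g(k)\ge 2pq$. Alternatively, one notes that $g$ is concave in $k$, so its minimum over $\{0,\dots,q\}$ equals $\min\{g(0),g(q)\}=\min\{\binom{q}{2},2pq\}=2pq$ under $q\ge 4p+1$, which is the analogue of Proposition~\ref{p1cb} in this setting.

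The main obstacle is the bookkeeping hidden in the second elementary fact. One must be sure that two vertices of $Q$ are incomparable only when both are leaves of the same broom, so that the conclusion ``always comparable, hence reversed, hence at least one rotation on the pair'' applies exactly to the families I use (pairs inside $Q\setminus L$, mixed $L$/$(Q\setminus L)$ pairs, and $P$/$Q$ pairs) but not to pairs inside $L$ or inside $P$; one must verify that the ``at least two reversals'' argument for a pair $\{x,y\}$ with $y\in L$ genuinely forces two rotations on that very pair rather than being discharged by rotations on neighbouring pairs; and one must confirm that the three families are genuinely disjoint and that the handle of every intermediate broom contains all of $P$ whenever some vertex of $Q$ drops to a leaf. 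These are the points that require care; the arithmetic afterwards is straightforward.
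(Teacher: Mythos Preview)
Your proposal is correct and follows essentially the same approach as the paper: introduce the set $L\subseteq Q$ of vertices that are ever leaves along a shortest sequence, set $k=|L|$, and bound the number of rotations from below by $\binom{q-k}{2}+k(q-k)+2pk$, then conclude via concavity (or, equivalently, your direct check of $g(k)\ge 2pq$). The paper phrases the $2pk$ contribution as ``one rotation when $y$ moves down past $x$ and one when it moves back up,'' whereas you phrase it as ``the ancestor relation between $x$ and $y$ is reversed at least twice, hence at least two rotations on that pair''; these are the same count, and your justification via the fact that a rotation changes the ancestor relation of an always-comparable pair only when it acts on that pair is a slightly more explicit version of what the paper leaves implicit.
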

\begin{proof}
The proof proceeds as that of Lemma \ref{l2cb}. We consider a shortest sequence of rotations that change $T_1$ into $T_2$, and the corresponding sequence of brooms.
We denote by $L$ the subset of the vertices in $Q$ that appear at least once as a leaf of a broom in this sequence and by $k$ the number of these vertices.

Since the $q-k$ vertices in $Q\mathord{\setminus}L$ remain in the handle of all the brooms in the considered sequence, and the relative order of two such vertices is inverted in $T_1$ and $T_2$, there must be at least
$$
q-k\choose2
$$
rotations involving two of them along that sequence. Similarly, there must be at least $k(q-k)$ rotations exchanging a vertex from $L$ with a vertex from $Q\mathord{\setminus}L$ along the handle of a broom in the considered sequence because the relative orders of these vertices are inverted between $T_1$ and $T_2$.

Now recall that all the brooms in the considered sequence must contain the whole of $P$ or the whole of $Q$ in their handle. Hence, if $k$ is positive, then all the vertices from $P$ must be lifted into the handle before a single vertex from $Q$ becomes a leaf of the broom. As the vertices of $P$ are always inserted at the bottom of the handle by rotations, at least two rotations exchange each vertex in $P$ and each vertex in $L$ within the handle of the broom: one when the latter vertex moves down in order to become a leaf and another when it moves back up. Hence there must be at least $2pk$ rotations involving a vertex from $P$ and a vertex from $L$.

The total number of rotations along the considered sequence is then at least
$$
2pk+k(q-k)+{q-k\choose2}.
$$

This quantity is a concave function of $k$ equal to $q \choose 2$
when $k=0$ and to $2pq$ when $k=q$. As
$$
2pq\leq{q \choose 2}
$$
when $q\geq4p+1$, the rotation distance between $T_1$ and $T_2$ is at least $2pq$ in this case, as desired.
\end{proof}

\subsection{Upper bound}

Before we prove an upper bound for $\delta(\mathcal{A}(\mathrm{K}_{p,q}))$, we introduce some definitions and technical results. 

\begin{definition}
Let $p$ and $q$ be two positive integers and $T$ a search tree on $\mathrm{K}_{p,q}$. We denote by $N_T$ (respectively $X_T$) the search tree on $\mathrm{K}_{p,q}$ having as leaves all the vertices in $P$ (respectively, all the vertices in $Q$) and where its $q$ (respectively~$p$) first vertices are in the same order that they appear in $T$ from the root to the leaves, breaking ties arbitrary when more than one vertex has the same height in $T$.
\end{definition}

\begin{lemma}
\label{lemme-dist-sp}
Let $p,q$ be positive integers and let $T$ be a broom on $\mathrm{K}_{p,q}$. Then 
$$
\mathrm{dist}(T,X_T) + \mathrm{dist}(T,N_T) \leq pq.
$$
\end{lemma}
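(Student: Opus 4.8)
The plan is to bound $\mathrm{dist}(T,N_T)$ and $\mathrm{dist}(T,X_T)$ separately, each by an explicit rotation sequence, in such a way that the two bounds add up to exactly $pq$. The combinatorial fact underlying this is that in \emph{any} broom on $\mathrm{K}_{p,q}$, every pair $(x,y)\in P\times Q$ is comparable, i.e.\ one of $x,y$ is an ancestor of the other: in a broom, two vertices fail to be comparable only when both are leaves, and the leaves of a broom on a complete bipartite graph all lie in a single part. Writing $A$ for the set of pairs $(x,y)\in P\times Q$ with $x$ an ancestor of $y$ in $T$ and $B$ for those with $y$ an ancestor of $x$, we thus have $|A|+|B|=pq$.

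First I would prove that $\mathrm{dist}(T,N_T)\le|A|$. The rotation sequence is the natural one: repeatedly pick the deepest vertex $x\in P$ that is not yet a leaf and rotate it downward, swapping it past the successive vertices below it on its root-to-leaf path until it becomes a leaf, then recurse on the next deepest vertex of $P$. Processing from the bottom guarantees that when $x$ is selected, all deeper vertices of $P$ have already been moved into the current leaf-star, so every vertex strictly below $x$ on its path belongs to $Q$; since $\mathrm{K}_{p,q}$ is bipartite, deleting $x$ from the subgraph induced by $x$ together with its remaining descendants leaves an edgeless graph, which is precisely what makes each rotation move $x$ down past one vertex of $Q$ and the last one turn $x$ into a leaf of the star. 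None of these rotations changes the relative order of the vertices of $Q$. Hence $x$ costs one rotation per $Q$-descendant it has in $T$ (and none if it is already a leaf), for a total of $\sum_{x\in P}\#\{y\in Q:\,x\text{ is an ancestor of }y\text{ in }T\}=|A|$; and the broom reached at the end has $P$ as its leaf set, with the vertices of $Q$ appearing along the handle in the order prescribed by $T$ --- that is, it is $N_T$.

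The symmetric argument, with the roles of $P$ and $Q$ interchanged (drive every vertex of $Q$ down into the leaf-star, deepest first), gives $\mathrm{dist}(T,X_T)\le|B|$, the terminal broom being $X_T$. Adding the two inequalities yields $\mathrm{dist}(T,N_T)+\mathrm{dist}(T,X_T)\le|A|+|B|=pq$.

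The step that requires the most care is the analysis of a single ``drive to a leaf'': verifying that the final rotation really turns the driven vertex into a leaf of the current star, and that nothing below it gets reordered. This is cleanest to check by distinguishing according to whether the leaf-star of $T$ consists of vertices of $P$, of vertices of $Q$, or is empty (equivalently, whether the handle of $T$ contains all of $Q$, all of $P$, or all vertices); in each case the final rotation is between the driven vertex and a vertex of the opposite part whose deletion isolates the driven vertex, because its remaining descendants induce an independent set. One should also note that the ties in the definitions of $N_T$ and $X_T$ --- all leaves of $T$ share the same height --- are resolved exactly by the order in which the corresponding vertices are drawn up into, or pushed down out of, the handle, so that the terminal brooms are indeed $N_T$ and $X_T$.
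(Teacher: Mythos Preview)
Your proof is correct and follows essentially the same strategy as the paper: bound $\mathrm{dist}(T,N_T)$ and $\mathrm{dist}(T,X_T)$ by explicit rotation sequences that drive all vertices of $P$ (respectively $Q$) down into the leaf-star, and show the two costs sum to $pq$. The paper carries this out by splitting the handle into maximal alternating blocks $\alpha_1,\beta_1,\alpha_2,\beta_2,\ldots$ with $\alpha_i\subseteq Q$, $\beta_i\subseteq P$ and summing products of block sizes, treating the two broom ``types'' separately; your formulation via the partition $P\times Q = A\sqcup B$ into ancestor--descendant pairs is the same count expressed more conceptually---your $|A|$ is exactly the paper's quantity $r$, and the case distinction by type is absorbed into the single observation that every mixed pair is comparable.
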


\begin{proof}
We will say that $T$ is of type 1 if the root and the leaves belong to different sides of the bipartition $P,Q$, and is of type 2 otherwise. 
We distinguish the two cases.

First assume that $T$ is of type 1. Let us further assume without loss of generality that the root of $T$ belongs to $Q$ and the leaves to $P$.
The broom $T$ therefore contains $2t-1$ nonempty sequences of vertices $\alpha_1,\beta_1,\alpha_2,\beta_2,\ldots,\alpha_t$, for some $t > 0$, starting from the root to the leaves,  where $\alpha_i \subseteq Q$ and $\beta_i \subseteq P$, followed by a set $\beta_t\subseteq P$ of leaves. 
For $1 \leq i \leq t$, let $a_i = |\alpha_i|$ and $b_i = |\beta_i|$. An upper bound for $\mathrm{dist}(T,X_T)$ can be computed as follows:
\begin{eqnarray*}
\mathrm{dist}(T,X_T) & \leq & a_1b_1 + (a_1+a_2)b_2 + (a_1+a_2+a_3)b_3 + \ldots + (a_1+a_2+ \ldots + a_t)b_t\\
& = & q b_t + (q-a_t)b_{t-1} + (q-a_t - a_{t-1})b_{t-2} + \ldots + (q-a_t-a_{t-1} - \ldots - a_2)b_1\\
& = & pq - a_2(p-b_t-b_{t-1}-\ldots -b_2) - a_3(p-b_t-b_{t-1}-\ldots -b_3) - \ldots \\
&   & - a_{t-1}(p - b_t - b_{t-1}) - a_t(p-b_t).\\
\end{eqnarray*}

This can be rewritten as $\mathrm{dist}(T,X_T)\leq{pq-r}$ where
$$
r =\sum_{i=2}^ta_i\!\!\left(p-\sum_{j=i}^tb_j\right)\!\!.
$$

Similarly, we can compute an upper bound for $\mathrm{dist}(T,N_T)$ as follows:
\begin{eqnarray*}
\mathrm{dist}(T,N_T) & \leq & a_2b_1 + a_3(b_1+b_2)+a_4(b_1+b_2+b_3)+\ldots \\
&   & + a_{t-1}(b_1+b_2+\ldots + b_{t-2}) + a_t(b_1+b_2+\ldots + b_{t-1})\\
& = & a_t(p-b_t) + a_{t-1}(p-b_t-b_{t-1}) + \ldots + a_3(p-b_t-b_{t-1}-\ldots - b_3) + \\
&   & a_2(p-b_t-b_{t-1}-\ldots - b_2).\\
\end{eqnarray*}

As, the last expression is precisely $r$, we obtain $\mathrm{dist}(T,N_T)\leq{r}$ and therefore
$$
\mathrm{dist}(T,X_T) + \mathrm{dist}(T,N_T) \leq pq,
$$
as desired. Now assume that $T$ is of type 2. In that case, we further require without loss of generality that the handle of $T$ consists of $2t-2$ nonempty sequences of vertices $\alpha_1,\beta_1,\alpha_2,\beta_2,\ldots,\beta_{t-1}$, for some $t > 0$, where $\alpha_i \subseteq Q$ and $\beta_i \subseteq P$. 
The set of the leaves of $T$ is denoted by $\alpha_t\subseteq Q$.
For $1 \leq i \leq t$, let $a_i = |\alpha_i|$ and $b_i = |\beta_i|$.

An upper bound for $\mathrm{dist}(T,X_T)$ can be computed as follows:
\begin{eqnarray*}
\mathrm{dist}(T,X_T) & \leq & a_1b_1 + (a_1+a_2)b_2 + \ldots + (a_1+a_2+ \ldots + a_{t-2})b_{t-2} + (a_1+a_2+\ldots + a_{t-1})b_{t-1}\\
& = & a_1p + a_2(p-b_1) + \ldots + a_{t-2}(p - b_1-b_2- \ldots - b_{t-3}) + a_{t-1}(p - b_1 -b_2 - \ldots - b_{t-2})\\
& = & pq - a_tp - a_{t-1}(b_1+b_2+\ldots + b_{t-2}) - a_{t-2}(b_1+b_2+\ldots + b_{t-3}) - \ldots\\
&   & - a_3(b_1+b_2) - a_2b_1\\
& = & pq - a_t(b_1+b_2+\ldots + b_{t-1}) - a_{t-1}(b_1+b_2+\ldots + b_{t-2}) - \ldots - a_3(b_1+b_2) - a_2b_1.\\
\end{eqnarray*}

Again, this can be rewritten as $\mathrm{dist}(T,X_T)\leq{pq-r}$ where
$$
r=\sum_{i=2}^ta_i\sum_{j=1}^{t-1}b_j.
$$

Finally, we can compute an upper bound for $\mathrm{dist}(T,N_T)$ as follows:
$$
\mathrm{dist}(T,N_T) \leq a_2b_1 + a_3(b_1+b_2) + \ldots + a_{t-1}(b_1+b_2+\ldots + b_{t-2}) + a_t(b_1+b_2+\ldots + b_{t-1}).
$$

The right hand side of this inequality is precisely $r$ and therefore, $\mathrm{dist}(T,X_T) + \mathrm{dist}(T,N_T) \leq pq$.
\end{proof}

We are now ready to prove an upper bound on the distance of two brooms on $K_{p,q}$.

\begin{lemma}
\label{l3sp}
Let $p$ and $q$ be positive integers. For any two brooms $T_1$ and $T_2$ on $\mathrm{K}_{p,q}$,
$$
\mathrm{dist}(T_1,T_2) \leq 2pq.
$$
\end{lemma}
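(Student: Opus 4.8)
The plan is to bound $\mathrm{dist}(T_1,T_2)$ by routing each of $T_1$ and $T_2$ to one of the canonical brooms supplied by Lemma~\ref{lemme-dist-sp}, and then to save a factor of two by combining two such routes. Say that a broom is a \emph{$Q$-leaf broom} if its leaves are exactly the vertices of $Q$ (equivalently, its handle is exactly the set $P$), and a \emph{$P$-leaf broom} symmetrically; observe that $X_T$ is always a $Q$-leaf broom and $N_T$ a $P$-leaf broom. Write $d_i=\mathrm{dist}(T_i,X_{T_i})$ and $e_i=\mathrm{dist}(T_i,N_{T_i})$, so that Lemma~\ref{lemme-dist-sp} reads $d_i+e_i\le pq$ for $i\in\{1,2\}$.

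The only additional ingredient I would establish is the following claim: every $Q$-leaf broom can be transformed into any prescribed $P$-leaf broom using at most $pq$ rotations, and symmetrically every $P$-leaf broom can be transformed into any prescribed $Q$-leaf broom using at most $pq$ rotations. To prove the first statement, start from a $Q$-leaf broom $B$ and read off the handle of the target $P$-leaf broom from its root downwards as $y_{\rho(1)},y_{\rho(2)},\dots,y_{\rho(q)}$. In $B$ the vertex $y_{\rho(1)}$ is a leaf, and lifting it until it becomes the root of the whole tree costs exactly $p$ rotations, one for each vertex of $P$ it must pass. Unpacking the definition of a rotation, and using that $P$ and $Q$ are both independent sets in $\mathrm{K}_{p,q}$ so that deleting one vertex of $Q$ never disconnects the relevant induced subgraph, one checks that this operation leaves the part of the broom below $y_{\rho(1)}$ equal to a $Q$-leaf broom on the complete bipartite graph spanned by $P$ and $Q\setminus\{y_{\rho(1)}\}$, in which the vertices of $P$ occur in the same relative order as in $B$. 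Iterating this over $y_{\rho(2)},\dots,y_{\rho(q)}$ produces the target broom after exactly $pq$ rotations, the final order of the $P$-vertices being immaterial since they all end up as leaves; the symmetric statement follows by exchanging $P$ and $Q$.

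Putting the pieces together, since $X_{T_1}$ is a $Q$-leaf broom and $N_{T_2}$ a $P$-leaf broom, the claim yields a path $T_1\to X_{T_1}\to N_{T_2}\to T_2$ of length at most $d_1+pq+e_2$; symmetrically, routing through $N_{T_1}$ and then $X_{T_2}$ gives a path of length at most $e_1+pq+d_2$. Both numbers are upper bounds on $\mathrm{dist}(T_1,T_2)$, so $2\,\mathrm{dist}(T_1,T_2)\le (d_1+e_1)+(d_2+e_2)+2pq\le pq+pq+2pq=4pq$, whence $\mathrm{dist}(T_1,T_2)\le 2pq$. The two-route averaging and the appeal to Lemma~\ref{lemme-dist-sp} are immediate; the one step that needs genuine care is the claim of the previous paragraph, where the only real work is to verify that each of the $p$ rotations lifting a single $Q$-vertex is legitimate and has precisely the stated effect on the rest of the broom. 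The small cases $p=1$ and $q=1$ should be checked separately but are straightforward.
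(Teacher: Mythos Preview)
Your proof is correct and follows essentially the same route as the paper: both consider the two paths $T_1 \to X_{T_1} \to N_{T_2} \to T_2$ and $T_1 \to N_{T_1} \to X_{T_2} \to T_2$, bound the outer legs via Lemma~\ref{lemme-dist-sp}, and use that the middle leg costs $pq$. Your averaging argument ($2\,\mathrm{dist}(T_1,T_2)\le(d_1+e_1)+(d_2+e_2)+2pq$) is a cleaner finish than the paper's four-case analysis of which path is shorter, and you also supply a justification for the $pq$ bound on the middle leg that the paper simply asserts.
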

\begin{proof}
We will consider two paths from $T_1$ to $T_2$ and choose the shortest. The first path is of the form
$$
P_1 = T_1 \to X_{T_1} \to N_{T_2} \to T_2,
$$
where the arrow in $A\to B$ denotes a shortest path between $A$ and $B$, and the second path is of the form
$$
P_2 = T_1 \to N_{T_1} \to X_{T_2} \to T_2.
$$

Clearly, $\mathrm{dist}(X_{T_1},N_{T_2}) = \mathrm{dist}(X_{T_2},N_{T_1}) = pq$. We assume without loss of generality that $P_1$ is shorter than $P_2$. We know from Lemma \ref{lemme-dist-sp} that there are two integers $r_1,r_2 \geq 0$ such that $\mathrm{dist}(T_1,N_{T_1}) \leq pq - r_1$ and $\mathrm{dist}(T_1,X_{T_1}) \leq r_1$ (or vice-versa) and $\mathrm{dist}(T_2,N_{T_2}) \leq pq - r_2$ and $\mathrm{dist}(T_2,X_{T_2}) \leq r_2$ (or vice-versa). Therefore, we only need to consider the following four cases.
\begin{itemize}
\item[(a)] Suppose that $\mathrm{dist}(T_1,X_{T_1}) \leq pq - r_1$ and $\mathrm{dist}(N_{T_2},T_2) \leq pq - r_2$. As $P_1$ is shorter than $P_2$,
$$
pq - r_1 + pq - r_2 \leq r_1 + r_2
$$
which implies that $r_1+r_2 \geq pq$. Therefore, $\mathrm{dist}(T_1,X_{T_1}) + \mathrm{dist}(N_{T_2},T_2) \leq 2pq - (r_1+r_2) \leq pq$.

\item[(b)] Suppose that $\mathrm{dist}(T_1,X_{T_1}) \leq pq - r_1$ and $\mathrm{dist}(N_{T_2},T_2) \leq r_2$. As $P_1$ is shorter than $P_2$,
$$
pq - r_1 + r_2 \leq r_1 + pq - r_2.
$$

Therefore, $r_1 \geq r_2$ and in turn, $\mathrm{dist}(T_1,X_{T_1}) + \mathrm{dist}(N_{T_2},T_2) \leq pq - r_1+r_2 \leq pq$.

\item[(c)] Suppose that $\mathrm{dist}(T_1,X_{T_1}) \leq r_1$ and $\mathrm{dist}(N_{T_2},T_2) \leq pq - r_2$. As $P_1$ is shorter than $P_2$,
$$
r_1 + pq - r_2 \leq pq - r_1 + r_2
$$
and as a consequence, $r_2 \geq r_1$. Hence, $\mathrm{dist}(T_1,X_{T_1}) + \mathrm{dist}(N_{T_2},T_2) \leq r_1+ pq - r_2 \leq pq$.

\item[(d)] Finally, suppose that $\mathrm{dist}(T_1,X_{T_1}) \leq r_1$ and $\mathrm{dist}(N_{T_2},T_2) \leq r_2$. As $P_1$ is shorter than $P_2$,
$$
r_1 + r_2 \leq pq - r_1 + pq - r_2
$$

It follows that $r_1 + r_2 \leq pq$ and therefore that $\mathrm{dist}(T_1,X_{T_1}) + \mathrm{dist}(N_{T_2},T_2) \leq r_1+ r_2 \leq pq$.
\end{itemize}
Since $\mathrm{dist}(X_{T_1},N_{T_2}) = pq$, the length of path $P_1$ is at most $2pq$, as desired. 
\end{proof}

Theorem \ref{teo1-sp} is a direct consequence of Lemmas \ref{l2sp} and \ref{l3sp}.

\section{Discussion}

In a recent preprint, Berendsohn give tight bounds on the diameter of associahedra of \textit{caterpillars}~\cite{B21}. 
In particular, he shows that when the graph $G$ is a caterpillar with exactly one leaf attached to each vertex of the spine,
then the diameter of $\mathcal A(G)$ is $\Omega(n\log n)$. This improves our Theorem~\ref{thm:pw2tight}, in the sense that the
result already holds for graphs of pathwidth one! (Indeed, the graphs of pathwidth one are exactly the caterpillars.)

Our work raises several questions. 
\begin{itemize}
\item The bound given in Theorem~\ref{thm:twb} should be tightened.
\item We proved that the diameter of associahedra of trivially perfect, complete split, and complete bipartite graphs is always
at most twice the number of edges of the graph.
All these graphs are \textit{cographs}: graphs without paths on four vertices as induced subgraphs. 
We propose the following question:
\begin{question}
Is it true that if $G$ is a connected cograph on $m$ edges, then $\delta(\mathcal{A}(G)) \leq 2m$?
\end{question}
\item We determined the diameter of the associahedra of complete bipartite graphs in the unbalanced case, when one of the two parts is larger than the other. While we also prove a general upper bound, the diameter of balanced bipartite graphs is still unknown. 
\begin{question}
What is the exact value of $\delta(\mathcal{A}(\mathrm{K}_{p,q}))$ when $\displaystyle\frac{p}{4}\leq{q}\leq4p$?
\end{question}

\end{itemize}

\noindent\textbf{Acknowledgement.} This work was partially supported by the French-Belgian PHC Project number 42703TD.

\bibliographystyle{plain}
\bibliography{GraphAssociahedra}

\begin{thebibliography}{10}

\bibitem{B21}
Benjamin~Aram Berendsohn.
\newblock The diameter of caterpillar associahedra.
\newblock arXiv:2110.12928, 2021.

\bibitem{BK20}
Benjamin~Aram Berendsohn and L{\'{a}}szl{\'{o}} Kozma.
\newblock Splay trees on trees.
\newblock In {\em Proceedings of the Annual {ACM}-{SIAM} {S}ymposium on
  {D}iscrete {A}lgorithms (SODA)}, 2022.

\bibitem{BCT85}
R.~E. Bixby, W.~H. Cunningham, and D.~M. Topkis.
\newblock The partial order of a polymatroid extreme point.
\newblock {\em Mathematics of Operations Research}, 10(3):367--378, 1985.

\bibitem{BDJKKMT98}
Hans~L. Bodlaender, Jitender~S. Deogun, Klaus Jansen, Ton Kloks, Dieter
  Kratsch, Haiko M{\"{u}}ller, and Zsolt Tuza.
\newblock Rankings of graphs.
\newblock {\em {SIAM} Journal on Discrete Mathematics}, 11(1):168--181, 1998.

\bibitem{BCIKL19}
Prosenjit Bose, Jean Cardinal, John Iacono, Grigorios Koumoutsos, and Stefan
  Langerman.
\newblock Competitive online search trees on trees.
\newblock In {\em Proceedings of the Annual {ACM}-{SIAM} {S}ymposium on
  {D}iscrete {A}lgorithms (SODA)}, pages 1878--1891, 2020.

\bibitem{CLP18}
Jean Cardinal, Stefan Langerman, and Pablo P{\'{e}}rez{-}Lantero.
\newblock On the diameter of tree associahedra.
\newblock {\em Electronic Journal of Combinatorics}, 25(4):P4.18, 2018.

\bibitem{CMM22}
Jean Cardinal, Arturo Merino, and Torsten M\"utze.
\newblock Efficient generation of elimination trees and {H}amilton paths on
  graph associahedra.
\newblock In {\em Proceedings of the Annual {ACM}-{SIAM} {S}ymposium on
  {D}iscrete {A}lgorithms (SODA)}, 2022.

\bibitem{CD06}
Michael Carr and Satyan~L. Devadoss.
\newblock Coxeter complexes and graph-associahedra.
\newblock {\em Topology and its Applications}, 153(12):2155--2168, 2006.

\bibitem{DJS03}
Michael~W. Davis, Tadeusz Januszkiewicz, and Richard~A. Scott.
\newblock Fundamental groups of blow-ups.
\newblock {\em Advances in Mathematics}, 177(1):115--179, 2003.

\bibitem{DHIKP09}
Erik~D. Demaine, Dion Harmon, John Iacono, Daniel~M. Kane, and Mihai
  P{\u{a}}tra{\c{s}}cu.
\newblock The geometry of binary search trees.
\newblock In {\em Proceedings of the Twentieth Annual {ACM-SIAM} Symposium on
  Discrete Algorithms, ({SODA})}, pages 496--505, 2009.

\bibitem{DHIP07}
Erik~D. Demaine, Dion Harmon, John Iacono, and Mihai P{\u{a}}tra{\c{s}}cu.
\newblock Dynamic optimality---almost.
\newblock {\em {SIAM} Journal on Computing}, 37(1):240--251, 2007.

\bibitem{D09}
Satyan~L. Devadoss.
\newblock A realization of graph associahedra.
\newblock {\em Discrete Mathematics}, 309(1):271--276, 2009.

\bibitem{D05}
Reinhard Diestel.
\newblock {\em Graph Theory}.
\newblock Graduate Texts in Mathematics. Springer, 2005.

\bibitem{Dra15}
P{\aa}l~Gr{\o}n{\aa}s Drange, Fedor~V. Fomin, Michal Pilipczuk, and Yngve
  Villanger.
\newblock Exploring the subexponential complexity of completion problems.
\newblock {\em {ACM} Transactions on Computation Theory}, 7(4):14:1--14:38,
  2015.

\bibitem{E09}
Nikolai~Yur'evich Erokhovets.
\newblock Gal's conjecture for nestohedra corresponding to complete bipartite
  graphs.
\newblock {\em Proceedings of the Steklov Institute of Mathematics},
  266(1):120, 2009.

\bibitem{FJ86}
Martin Farber and Robert~E. Jamison.
\newblock Convexity in graphs and hypergraphs.
\newblock {\em SIAM Journal on Algebraic Discrete Methods}, 7(3):433--444,
  1986.

\bibitem{FLS09}
Stefan Forcey, Aaron Lauve, and Frank Sottile.
\newblock {New Hopf Structures on Binary Trees}.
\newblock {\em {Discrete Mathematics \& Theoretical Computer Science}}, January
  2009.

\bibitem{Gol78}
Martin~Charles Golumbic.
\newblock Trivially perfect graphs.
\newblock {\em Discrete Mathematics}, 24(1):105--107, 1978.

\bibitem{K96}
Alan~H. Karp.
\newblock Bit reversal on uniprocessors.
\newblock {\em {SIAM} {R}eview}, 38(1):1--26, 1996.

\bibitem{KMS95}
Meir Katchalski, William McCuaig, and Suzanne Seager.
\newblock Ordered colourings.
\newblock {\em Discrete Mathematics}, 142(1):141--154, 1995.

\bibitem{L04}
Jean-Louis Loday.
\newblock Realization of the {S}tasheff polytope.
\newblock {\em Archiv der Mathematik}, 83(3):267--278, Sep 2004.

\bibitem{MP15}
Thibault {Manneville} and Vincent {Pilaud}.
\newblock {Graph properties of graph associahedra}.
\newblock {\em S{\'e}minaire Lotharingien de Combinatoire}, {B73d}, 2015.

\bibitem{Neo15}
Jaroslav Ne{\v s}et{\v r}il and Patrice~Ossona de~Mendez.
\newblock On low tree-depth decompositions.
\newblock {\em Graphs and Combinatorics}, 31(6):1941--1963, 2015.

\bibitem{NO12}
Jaroslav Ne{\v s}et{\v r}il and Patrice Ossona~de Mendez.
\newblock {\em Sparsity: Graphs, Structures, and Algorithms}, chapter~6, pages
  115--144.
\newblock Springer, 2012.

\bibitem{P09}
Alexander Postnikov.
\newblock Permutohedra, associahedra, and beyond.
\newblock {\em International Mathematics Research Notices}, 2009(6):1026--1106,
  2009.

\bibitem{PW08}
Alexander Postnikov, Victor Reiner, and Lauren~K. Williams.
\newblock Faces of generalized permutohedra.
\newblock {\em Documenta Mathematica}, 13:207--273, 2008.

\bibitem{P88}
Alex Pothen.
\newblock The complexity of optimal elimination trees.
\newblock Tech. Report {CS-88-13}, Pennsylvania State University, 1988.

\bibitem{P14}
Lionel Pournin.
\newblock The diameter of associahedra.
\newblock {\em Advances in Mathematics}, 259:13--42, 2014.

\bibitem{P17}
Lionel Pournin.
\newblock The asymptotic diameter of cyclohedra.
\newblock {\em Israel Journal of Mathematics}, 219(2):609--635, 2017.

\bibitem{Santos2012}
Francisco Santos.
\newblock A counterexample to the {Hirsch} conjecture.
\newblock {\em Annals of Mathematics}, 176:383--412, 2012.

\bibitem{S89}
Alejandro~A. Sch{\"a}ffer.
\newblock Optimal node ranking of trees in linear time.
\newblock {\em Information Processing Letters}, 33(2):91--96, 1989.

\bibitem{S93}
Petra Scheffler.
\newblock Node ranking and searching on graphs.
\newblock In {\em Third Twente Workshop on Graphs and Combinatorial
  Optimization}, 1993.

\bibitem{SleatorTarjanThurston1988}
Daniel Sleator, Robert Tarjan, and William Thurston.
\newblock Rotation distance, triangulations, and hyperbolic geometry.
\newblock {\em Journal of the American Mathematical Society}, 1:647--681, 1988.

\bibitem{ST85}
Daniel~Dominic Sleator and Robert~Endre Tarjan.
\newblock Self-adjusting binary search trees.
\newblock {\em Journal of the {ACM}}, 32(3):652--686, 1985.

\bibitem{S63}
James~Dillon Stasheff.
\newblock Homotopy associativity of {H}-spaces. {I}.
\newblock {\em Transactions of the American Mathematical Society},
  108(2):275--292, 1963.

\bibitem{T51}
Dov Tamari.
\newblock Mono{\"i}des pr{\'e}ordonn{\'e}s et cha{\^i}nes de {M}alcev.
\newblock Th{\`e}se de Math{\'e}matiques, Paris, 1951.

\bibitem{W89}
Robert~E. Wilber.
\newblock Lower bounds for accessing binary search trees with rotations.
\newblock {\em {SIAM} Journal on Computing}, 18(1):56--67, 1989.

\bibitem{Wol62}
Elliot~S. Wolk.
\newblock The comparability graph of a tree.
\newblock {\em Proceedings of the American Mathematical Society}, 13:789--795,
  1962.

\bibitem{Yan96}
Jing{-}Ho Yan, Jer{-}Jeong Chen, and Gerard~J. Chang.
\newblock Quasi-threshold graphs.
\newblock {\em Discrete Applied Mathematics}, 69(3):247--255, 1996.

\end{thebibliography}

\end{document}